\definecolor{wine-stain}{rgb}{0.5,0,0}
\newtheorem{thm}{Theorem}[section]
\newtheorem{cor}[thm]{Corollary}
\newtheorem{lem}[thm]{Lemma}
\newtheorem{prop}[thm]{Proposition}
\theoremstyle{definition}
\theoremstyle{definition}
\theoremstyle{definition}
\newtheorem{exam}[thm]{Example}
\def\R{\mathbb R}
\def\A{\mathcal A}
\def\B{\mathcal B}
\def\n{\mathbf n}
\def\A{\mathscr A}
\begin{document}

\title[]{Affine Focal Sets of Codimension $2$ \newline
 Submanifolds contained in Hypersurfaces}

\author[M.~Craizer]{Marcos Craizer}
\address{
Pontif\'icia Universidade Cat\'olica do Rio de Janeiro, Departamento de Matem\'atica, 22453-900 Rio de janeiro (RJ), BRAZIL}
\email{craizer@puc-rio.br}

\author[M.~J.~Saia]{Marcelo J.~Saia}
\address{Universidade de S\~ao Paulo, ICMC-SMA, Caixa Postal 668,
13560-970 S\~ao Carlos (SP), BRAZIL}
\email{mjsaia@icmc.usp.br}

\author[L.~F.~Sánchez]{Luis F.~Sánchez}
\address{Universidade Federal de Uberlândia, FAMAT, Departamento de Matemática,  Rua Goiás 2000, 38500-000 Monte Carmelo (MG),  BRAZIL}
\email{luis.sanchez@ufu.br}

\thanks{The first author wants to thank CNPq for financial support during the preparation of this manuscript. The second and third authors  have been partially supported by CAPES, FAPESP and CNPq.}

\subjclass{ 53A15}

\keywords{ Affine metrics, Affine normal planes, Evolutes, Visual contours, Umbilic immersions, Normally flat immersions. }

\date{August 08, 2016}

\begin{abstract}
In this paper we study the affine focal set, which is  the bifurcation set of the affine distance to submanifolds $N^n$ contained in hypersurfaces $M^{n+1}$ of the $(n+2)$-space. 
We give condition under which this affine focal set is a regular hypersurface and, for curves in $3$-space, we describe its stable singularities. For a given Darboux vector field
$\xi$ of the immersion $N\subset M$, one can define the affine metric $g$ and the affine normal plane bundle $\A$. We prove that the $g$-Laplacian of the position vector 
belongs to $\A$ if and only if $\xi$ is parallel. 

For umbilic and normally flat immersions, the affine focal set reduces to a single line. Submanifolds contained in hyperplanes or hyperquadrics are always normally flat. 
For $N$ contained in a hyperplane $L$, we show that $N\subset M$ is umbilic if and only if $N\subset L$ is an affine sphere and the envelope of tangent spaces is a cone. 
For $M$ hyperquadric, we prove that $N\subset M$ is umbilic if and only if $N$ is contained in a hyperplane. The main result of the paper is a general description of the umbilic
and normally flat immersions: Given a hypersurface $f$ and a point $O$ in the $(n+1)$-space, the immersion $(\nu,\nu\cdot(f-O))$, where $\nu$ is the co-normal of $f$, is umbilic 
and normally flat, and conversely, any umbilic and normally flat immersion is of this type. 
\end{abstract}

\maketitle

\section{Introduction}

Let $N^n\subset M^{n+1}\subset \R^{n+2}$ be a submanifold of codimension $2$ contained in a hypersurface $M$. For a frame $\{\xi,\eta\}$, $\xi$ tangent to $M$
and transversal to $N$, $\eta$ transversal to $M$, write
\begin{equation*}
D_XY=\nabla_XY +h^1(X,Y)\xi+h^2(x,y)\eta,
\end{equation*}
where $D$ denotes the canonical connection of the affine $(n+2)$-space, $X,Y$ are tangent to $N$ and $\nabla_XY$ is also tangent to $N$. We shall 
assume that the metric $h^2$ is non-degenerate, which is independent of the choice of the frame $\{\xi,\eta\}$. Under this non-degenerate hypothesis, 
there exists a unique, up to multiplication by a scalar function, vector field $\xi$ tangent to $M$ and transversal to $N$ such that $D_X\xi$ is tangent to $M$, for any $X$ 
tangent to $N$. Any such vector field is called a {\it Darboux vector field along $N$} (\cite{Craizer}). 

For a fixed Darboux vector field $\xi$, one can define an unique affine invariant metric $g$ on $N$ such that 
\begin{equation*}
\left[ X_1,...., X_n, D_{X_i}X_j,\xi \right]=\delta_{ij},
\end{equation*}
for any $g$-orthonormal frame $\{X_1,...,X_n\}$, where $[,,]$ denote the canonical volume form in the $(n+2)$-space (see \cite{LSan}). For $\eta$ transversal to $M$, write
\begin{equation*}
D_{X}\eta = -S_{\eta}X +\tau_2^1(X)\xi+\tau_2^2(X)\eta,
\end{equation*}
where $S_{\eta}X$ is tangent to $N$. It is proved in \cite{LSan} that there exists a vector field $\eta$ satisfying $\tau_2^2=0$ and 
\begin{equation*}
\left[ X_1,....,X_n, \eta,\xi \right]=1,
\end{equation*}
for any $g$-orthonormal frame $\{X_1,...,X_n\}$. Moreover, any other $\eta_1$ with the same properties can be written as $\eta_1=\eta+\lambda\xi$,
for some $\lambda\in\R$, and so the plane $\A(p)$, $p\in N$, generated by $\{\xi,\eta\}$ is unique. The family $\{\A(p),\ p\in N\}$ is called the 
{\it affine normal plane bundle}.

A vector field $\zeta\in\A$ is called {\it parallel} if $D_X\zeta$ is tangent to $N$, for any $X\in TN$. Some immersions admit a parallel Darboux vector field and, when this is the case, 
we have some reasons to choose it: The Transon planes coincide with the affine normal planes and the cubic form $C^2$ is apolar (\cite{Craizer}). In this paper we give one more reason
for the choice, namely, we prove that the $g$-Laplacian of the position vector $\phi$ belongs to the affine normal plane if and only if $\xi$ is parallel.

We are particularly interested in the case that the immersion admits a parallel Darboux vector field $\xi$ and also a parallel vector field $\eta\in\A$ linearly independent
of $\xi$. We verify that these conditions are equivalent to the flatness of a certain connection $\nabla^{\perp}$, called normal connection. When these conditions hold, 
we say that the immersion is {\it normally flat}. 

When $\xi$ is parallel and umbilic, i.e., $S_{\xi}$ is a multiple of the identity, then $N\subset M$ is a {\it visual contour}. This means that there exists a point $O\in\R^{n+2}$
in the intersection of all tangent spaces of $M$ along $N$. If $\eta\in\A$, linearly independent from $\xi$, is also parallel and umbilic, then there exists 
another fixed point $Q$ that belongs to the intersection of the normal planes $\A(p)$, $p\in N$. Thus if the immersion is umbilic and normally flat, there exist fixed points $O$ and $Q$
in the intersection of the tangent spaces and normal planes, respectively. Conversely, if such points $O$ and $Q$ exist, the immersion is necessarily umbilic and normally flat.

For $p\in N$, let $\{X_1(p),...,X_n(p)\}$ be a $g$-orthonormal tangent frame for $N$. The affine distance function $\Delta:\R^{n+2}\times N\to\R$  is given by
\begin{equation}\label{eq:DefineDelta}
\Delta(x,p)=\left[ x-p, X_1(p),..., X_n(p), \xi(p) \right].
\end{equation}
Then the singular set of $\Delta$ is the affine normal plane $\A$ (\cite{Sanchez}) and the bifurcation set $\B$ of $\Delta$ is called the {\it affine focal set}  of the immersion 
$N\subset M$. We verify that for umbilic and normally flat immersions, the set $\B$ reduces to a single line. We also give conditions under which this set is locally a hypersurface of 
$\R^{n+2}$. 

We consider in more detail curves $\phi$ contained in surfaces $M\subset \R^3$. In this case, the affine focal set $\B$ is a developable
surface $\B$ in $\R^3$ and we describe the stable singularities of $\B$, giving conditions under which they are equivalent to a cuspidal edge or a swallowtail. 
We verify that there exist parallel vector fields $\xi$ and $\eta$ generating the affine normal plane bundle and so the immersion is normally flat. For curves, the umbilic condition must be 
replaced by the visual contour condition together with the existence of a fixed point $Q$ in the intersection of the affine normal planes.
We characterize the curves satisfying these conditions and apply the result for spatial curves not necessarily contained in surfaces. 
We also discuss two six vertex theorems, one affinely invariant for "umbilic" spatial curves and the other projectively invariant for general planar curves.

For immersions $N\subset M$ such that $N$ is contained in a hyperplane $L$, we can choose $\xi$ parallel and $\eta$ the Blaschke vector field of $N\subset L$, which is also parallel. 
In this context we verify that $N\subset M$ is an umbilic immersion if and only if $N\subset L$ is an affine sphere and the envelope of tangent spaces of $N\subset M$
is a cone. 

For immersions $N\subset M$, where $M$ is a hyperquadric, we can choose a parallel Darboux vector field $\xi$ $h$-unitary and orthogonal to $N$, where $h$ is the Blaschke metric of $M$. 
Then the vector field $\eta(p)=p-Q$, where $Q$ is the center of $M$, is umbilic, parallel and contained in the affine normal plane. In this context, we show that the immersion $N\subset M$
is umbilic if and only if $N$ is contained in a hyperplane.

The main result of this paper is a geometric characterization of umbilic and normally flat immersions. Consider a non-degenerate immersion $f:U\subset\R^{n}\to\R^{n+1}$ and let 
$\nu:U\subset\R^{n}\to\R^{n+1}$ denote its Blaschke co-normal map. We verify that the centro-affine  immersion $\left(\nu,\nu\cdot(f-O)\right):U\to\R^{n+2}$ is umbilic and normally flat, 
for any choice of the origin $O\in\R^{n+1}$. The converse is also true: Any umbilic and normally flat immersion is locally obtained by this construction.  As a corollary we show that a umbilic 
and normally flat immersion $N$ is contained in a hyperplane if and only if $f$ is a proper affine sphere.

The paper is organized as follows: In section 2, we describe the Darboux vector fields and other basic facts of the affine geometry of codimension $2$ submanifolds contained 
in a hypersurface. In section 3 we consider the case of parallel Darboux vector field and prove that the Laplacian of the position vector is in the affine normal plane. 
In section 4 we discuss conditions under which an immersion is normally flat and umbilic. In section 5 we describe the basic properties of the affine focal set, 
giving conditions for its regularity. In section 6 we consider curves contained in surfaces of $\R^3$,
where the affine focal set coincides with the envelope of normal planes. In this case, we give a complete classification of the stable singularities that appear in the affine focal set. In section 7
we study the particular cases of submanifolds contained in a hyperplane, when the immersion is umbilic if and only if the envelope of tangent spaces is a cone over an affine sphere, and
of submanifolds contained in hyperquadrics, where the immersion is umbilic if and only if the submanifolds is contained in a hyperplane. In section 8 we prove the main result of the paper: 
Given a hypersurface $f$ and a point $O$ in the $(n+1)$-space, the immersion $(\nu,\nu\cdot(f-O))$, where $\nu$ is the co-normal of $f$, is umbilic and normally flat, and, conversely, any umbilic and normally flat immersion is of this type.

\section{Affine geometry of codimension 2 submanifolds contained in hypersurfaces}

Denote by $D$ the canonical affine connection and by $\left[\cdot,...,\cdot\right]$ the canonical volume form of $\R^{n+2}$.

\subsection{Basic equations for codimension $2$ immersions}

Let $\phi:U\to\R^{n+2}$ be an immersion, where $U$ is a $n$-dimensional manifold and denote $N=\phi(U)$. 
Consider a transversal plane bundle $\A(p)$, $p\in N$, and let $\{\xi_1,\xi_2\}$ be a frame for $\A$. For $X,Y\in \mathfrak{X}(U)$, write
\begin{equation}\label{eq:basic1}
D_X\phi_*Y=\phi_*(\nabla_XY)+h^1(X,Y)\xi_1+h^2(X,Y)\xi_2.
\end{equation}
Then $\nabla$ is a torsion free affine connection and $h^i$, $i=1,2$,  are symmetric bilinear forms on $U$. For $X\in\mathfrak{X}(U)$, $i=1,2$, write
\begin{equation}\label{eq:basic2}
D_X\xi_i=-\phi_*(S_i(X))+\tau_i^1(X)\xi_1+\tau_i^2(X)\xi_2,
\end{equation}
where $\tau_i^j$ are $1$-forms on $U$ and the linear maps $S_i(u):T_uU\to T_uU$ are called {\it shape operators} of $\xi_i$. In general, for $\zeta\in\A$, write
\begin{equation*}\label{eq:basic3}
D_X\zeta=-\phi_*(S_{\zeta}(X))+\tau_{\zeta}^1(X)\xi_1+\tau_{\zeta}^2(X)\xi_2,
\end{equation*}
and the linear map $S_{\zeta}$ is called the shape operator of $\zeta$. Most of the time we shall consider that $U=N$ and $\phi:N\to\R^{n+2}$ is the inclusion map.

The normal connection
$\nabla^{\perp}$ is defined by 
\begin{equation*}\label{eq:basic4}
\nabla^{\perp}_X\zeta=\tau_{\zeta}^1(X)\xi_1+\tau_{\zeta}^2(X)\xi_2.
\end{equation*}
The curvature tensor of the normal connection, called \textit{ normal curvature tensor},
$R_{\nabla^{\perp}}:T_p N\times T_pN\times\A_p\rightarrow\A_p$, is defined by
\begin{equation*}
R_{\nabla^{\perp}}(X,Y)\zeta=\nabla^{\perp}_X (\nabla^{\perp}_Y \zeta)-\nabla^{\perp}_Y (\nabla^{\perp}_X \zeta)-\nabla^{\perp}_{[X,Y]}\zeta.
\end{equation*}
Since $\R^4$ has vanishing curvature, we obtain
\begin{equation}\label{eq:Codazzi}
R_{\nabla^{\bot}}(X,Y)\zeta=\left( h^1(X, S_{\zeta}Y)- h^1(Y, S_{\zeta}X) \right)\xi_1+  \left( h^2(X, S_{\zeta}Y)- h^2(Y, S_{\zeta}X) \right)\xi_2.
\end{equation}
We have also Ricci equations, $i=1,2$, $X,Y\in\mathfrak{X}(N)$,
{\scriptsize
\begin{equation}\label{eq:basic6a}
h^1(X,S_1Y)-h^1(Y,S_1X)=d\tau_1^1(X,Y)+\tau_1^2(Y)\tau_2^1(X)-\tau_2^1(Y)\tau_1^2(X),
\end{equation}
\begin{equation}\label{eq:basic6b}
h^2(X,S_1Y)-h^2(Y,S_1X)=d\tau_1^2(X,Y)+\tau_1^1(Y)\tau_1^2(X)-\tau_1^2(Y)\tau_1^1(X)+\tau_1^2(Y)\tau_2^2(X)-\tau_2^2(Y)\tau_1^2(X),
\end{equation}
\begin{equation}\label{eq:basic6c}
h^1(X,S_2Y)-h^1(Y,S_2X)=d\tau_2^1(X,Y)+\tau_2^2(Y)\tau_2^1(X)-\tau_2^1(Y)\tau_2^2(X)+\tau_2^1(Y)\tau_1^1(X)-\tau_1^1(Y)\tau_2^1(X),
\end{equation}
\begin{equation}\label{eq:basic6d}
h^2(X,S_2(Y))-h^2(Y,S_2(X))=d\tau_2^2(X,Y)+\tau_2^1(Y)\tau_1^2(X)-\tau_1^2(Y)\tau_2^1(X).
\end{equation}
}
The cubic form $C^i$, $i=1,2$, is defined by
\begin{equation}\label{eq:CubicForm}
C^i(X,Y,Z)=\nabla_Xh^i(Y,Z)+\tau_1^i(X)h^1(Y,Z)+\tau_2^i(X)h^2(Y,Z),
\end{equation}
for $X,Y,Z\in\mathfrak{X}(N)$. It follows from Codazzi equations that $C^i$ is symmetric in $X,Y,Z$. 

For more details, see \cite{Nomizu-Vrancken} and \cite{Sanchez}.

\subsection{Darboux vector field}

From now on, we consider a codimension $2$ submanifold $N$ contained in a hypersurface $M$ of $\R^{n+2}$. Take vector fields $\xi_1=\xi$ tangent to $M$ and transversal to $N$ and 
$\xi_2=\eta$ transversal to $M$. We shall assume that $h^2$ given by equation \eqref{eq:basic1} is non-degenerate, which is independent of the choice of $\xi$ and $\eta$. 

Under this non-degeneracy hypothesis, there exists a unique $\xi$, up to multiplication by a scalar function, such that $D_X\xi$ is tangent to $M$, for any $X\in TN$. 
We call any such $\xi$ a {\it Darboux vector field} of the immersion $N\subset M$ (see \cite{Craizer}). 
Equation \eqref{eq:basic1} can be re-written as 
\begin{equation}\label{eq:basic}
D_XY=\nabla_XY+h^1(X,Y)\xi+h^2(X,Y)\eta.
\end{equation}
Now equation \eqref{eq:basic2} implies that $\tau_1^2=0$ and so, by equation \eqref{eq:basic6b}, $S_1$ is $h^2$-self adjoint.  

\subsection{Affine metric and affine normal plane bundle}

Consider an immersion $N^n\subset M^{n+1}\subset\R^{n+2}$ and denote by $\xi$ a fixed Darboux vector field tangent to $M$ along $N$. Consider a tangent frame 
$\mathfrak{u}=\{X_1,...,X_n\}$ and let
\begin{equation*}
G_{\mathfrak{u}}(X,Y)=\left[ X_1,....,X_n,D_XY,\xi \right]. 
\end{equation*}
Then $G_{\mathfrak{u}}$ is non-degenerate and we can define the  metric.
\begin{equation*}
g(X,Y)=\frac{G_{\mathfrak{u}}(X,Y)}{|\det_{\mathfrak{u}}G_{\mathfrak{u}}|^{\frac{1}{n+2}}},
\end{equation*}
where $\det_{\mathfrak{u}}G_{\mathfrak{u}}= \det (G_{\mathfrak{u}}(X_i,X_j))$. The metric $g$ is called the {\it affine metric} of the immersion
$N\subset M$ (\cite{LSan},\cite{LSanTese}).

Consider a tangent $g$-orthonormal frame $\{X_1,...,X_n\}$ and let $\eta$ be a transversal vector field satisfying 
\begin{equation*}\label{eq:Det1}
\left[ X_1,...,X_n,\eta,\xi \right]=1.
\end{equation*}
Then we obtain from equation \eqref{eq:basic} that $g=h^2$. 
It turns out that there exists a transversal vector field $\eta$ such that $D_X\eta$ is tangent to $M$, for any $X$ tangent to $N$. Moreover, the plane bundle generated by $\{\xi,\eta\}$ 
is unique, and it is called the {\it affine normal plane bundle} (\cite{LSan},\cite{LSanTese}). We shall denote it by $\A(p)$, $p\in N$. 

From equation \eqref{eq:basic2}, $\tau_2^2=0$, which, by equation \eqref{eq:basic6d}, implies that $S_2$ is $h^2$-self adjoint. 
We conclude that, in this setting, both $S_1$ and $S_2$ are $g$-self adjoint.

\subsection{Affine semiumbilic immersions}

An immersion $N\subset M$ is {\it affine semiumbilic} with respect to $\A$ if $S_{\zeta}$ is a multiple of the identity, for some $\zeta\in\A$. An 
immersion $N\subset M$ is affine semiumbilic with respect to $\A$ at a point $p$ if $S_{\zeta}(p)$ is a multiple of the identity, for some $\zeta\in\A$.

\begin{prop}
If an immersion $N\subset M$ is affine semiumbilic with respect to $\A$ at $p$, then the shape operators $S_{\zeta}(p)$, $\zeta\in \A$, commute. For $n=2$, the converse also holds.
\end{prop}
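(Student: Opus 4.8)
The plan is to translate the statement entirely into linear algebra of the pencil of shape operators. Since $D$ is linear, the assignment $\zeta\mapsto S_\zeta$ is linear, so writing $\zeta=a\xi+b\eta$ gives $S_\zeta=aS_1+bS_2$; thus $\{S_\zeta:\zeta\in\A(p)\}$ is the plane $\operatorname{span}(S_1,S_2)$ inside the space of endomorphisms of $T_pN$. Being affine semiumbilic at $p$ means precisely that this plane meets the line $\R\cdot\id$ in a nonzero vector, i.e. there exist $(a,b)\neq(0,0)$ and $\lambda\in\R$ with $aS_1+bS_2=\lambda\,\id$.

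For the forward implication (valid for every $n$), I would use such a relation to eliminate one operator: assuming say $b\neq0$, solve $S_2=\tfrac{1}{b}(\lambda\,\id-aS_1)$, so that $S_2$, and hence every $S_\zeta=cS_1+dS_2$, is a polynomial of degree $\le1$ in the single operator $S_1$. Operators that are polynomials in one common operator commute, so all the $S_\zeta(p)$ commute. The case $a\neq0$ is symmetric, and since $\zeta\neq0$ at least one of $a,b$ is nonzero, so this covers every semiumbilic situation.

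For the converse when $n=2$, I would pass to the trace-free parts $\tilde S_i:=S_i-\tfrac12\operatorname{tr}(S_i)\,\id$. Commutativity is unaffected, since $[\tilde S_1,\tilde S_2]=[S_1,S_2]$, and $S_1,S_2,\id$ are linearly dependent if and only if $\tilde S_1,\tilde S_2$ are; by the first paragraph this latter dependence is exactly a semiumbilic relation. Hence it suffices to show that two commuting trace-free $2\times2$ operators are proportional. This is the key step: a nonzero trace-free $2\times2$ operator $A$ is non-scalar, hence non-derogatory (its minimal polynomial has degree $2$), so its centralizer is $\operatorname{span}(\id,A)$; intersecting with the trace-free operators leaves only $\R\cdot A$, which forces $\tilde S_2\in\R\cdot\tilde S_1$ as soon as they commute.

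The main obstacle, and the reason the converse is restricted to $n=2$, is that one cannot simply diagonalize $S_1$ and $S_2$ simultaneously: the affine metric $g=h^2$ may be indefinite, so $g$-self-adjoint operators need not be diagonalizable. I would choose the trace-free/centralizer argument precisely to sidestep this, since it uses neither definiteness nor even self-adjointness, only the dimension. For $n>2$ the conclusion genuinely fails, as shown by commuting diagonal operators admitting no common scalar combination, which is exactly why the statement confines the converse to surfaces.
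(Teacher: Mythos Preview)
Your argument is correct. The forward implication matches the paper's: both write $aS_1+bS_2=\lambda\,\id$ and observe that one operator then becomes a degree-one polynomial in the other, forcing commutativity of the whole pencil.

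For the converse in dimension $n=2$ your route is genuinely different. The paper works in a $g$-orthonormal frame $\{X_1,X_2\}$, uses that $S_1,S_2$ are $g$-self-adjoint to make their matrices symmetric, and then checks by direct computation that both ``$S_1,S_2,\id$ linearly dependent'' and ``$[S_1,S_2]=0$'' reduce to the vanishing of the same $2\times2$ determinant
\[
\det\begin{pmatrix}\lambda_{22}-\lambda_{11}&\lambda_{12}\\ \mu_{22}-\mu_{11}&\mu_{12}\end{pmatrix}.
\]
Your approach instead strips off traces and invokes the centralizer of a non-derogatory $2\times2$ operator. This is more invariant and, as you point out, uses neither self-adjointness nor any positivity of $g$; it therefore sidesteps cleanly the issue you flag about indefinite $g$ (where ``symmetric in an orthonormal frame'' would need a sign adjustment, though the paper's determinant identity in fact survives). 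The paper's computation has the complementary virtue of being completely explicit: it exhibits a single scalar whose vanishing characterizes both conditions. Either argument is adequate here.
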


\begin{proof}
Fix $\zeta_1,\zeta_2\in\A$. If $p$ is affine semi-umbilic, we can write $aS_{\zeta_1}+bS_{\zeta_2}=\lambda I$, for some real numbers $a,b,\lambda$. 
This implies that $S_{\zeta_1}\circ S_{\zeta_2}=S_{\zeta_2}\circ S_{\zeta_1}$. 
For the converse, write for $n=2$
\begin{equation*}\label{eq:S2X2}
\left\{
\begin{array}{c}
S_{1}(X_1)=\lambda_{11}X_1+\lambda_{21}X_2\\
S_{1}(X_2)=\lambda_{12}X_1+\lambda_{22}X_2
\end{array}
\right.
\ \ \left\{
\begin{array}{c}
S_{2}(X_1)=\mu_{11}X_1+\mu_{21}X_2\\
S_{2}(X_2)=\mu_{12}X_1+\mu_{22}X_2,
\end{array}
\right.
\end{equation*}
where $\{X_1,X_2\}$ is a $g$-orthonormal frame of $TN$. Since both $S_{1}$ and $S_{2}$ are $g$-self adjoint, $\lambda_{12}=\lambda_{21}$ and $\mu_{12}=\mu_{21}$.
The semiumbilic condition 
is then equivalent to 
$$
\det\left[
\begin{array}{cc}
\lambda_{22}-\lambda_{11} & \lambda_{12} \cr
\mu_{22}-\mu_{11} & \mu_{12}
\end{array}
\right]=0.
$$
On the other hand, the commutativity of $S_{1}$ and $S_{2}$ is also equivalent to this condition. 
\end{proof}

\section{Parallel Darboux vector field}

For immersions $N\subset M$ that admit a parallel Darboux vector field $\xi$, this choice 
of $\xi$ is ubiquitous. In fact, it is proved in \cite{Craizer} that the condition $\xi$ parallel is equivalent to the coincidence of the affine normal plane with the Transon plane, and
also equivalent to the apolarity of the second cubic form. In this section, we show that $\xi$ parallel is also equivalent
to the condition that the Laplacian of the position vector $\phi$ belongs to the affine normal plane.

\subsection{Immersions that admit a parallel Darboux vector field}

It is not always true that an immersion $N\subset M$ admits a parallel Darboux vector field $\xi$. In fact, we have the following proposition:

\begin{prop}
The immersion $N\subset M$ admits a local parallel Darboux vector field if and only $R_{\nabla^{\perp}}\xi=0$. When it exists, the parallel 
Darboux vector field is unique up to a multiplicative constant.
\end{prop}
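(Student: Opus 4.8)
The plan is to reduce the entire statement to a question about the single scalar $1$-form $\tau_1^1$. Since a Darboux vector field is unique up to multiplication by a scalar function, every candidate is of the form $\bar\xi=\mu\xi$ with $\mu$ a nonvanishing function, and any such $\bar\xi$ is again Darboux. The first step is to record that in the Darboux frame one has $\tau_1^2=0$, so that equation \eqref{eq:basic2} gives $\nabla^\perp_X\xi=\tau_1^1(X)\xi$; in other words the line bundle $\R\xi$ is preserved by $\nabla^\perp$, with connection $1$-form $\tau_1^1$. Parallelism of $\bar\xi$, meaning $\nabla^\perp_X\bar\xi=0$ for all $X$, then unwinds to
\begin{equation*}
\nabla^\perp_X(\mu\xi)=\bigl(X\mu+\mu\,\tau_1^1(X)\bigr)\xi=0,
\end{equation*}
that is, to the equation $d\mu+\mu\,\tau_1^1=0$, equivalently $d\log|\mu|=-\tau_1^1$.

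Next I would solve this equation. Setting $\mu=e^{u}$, a local solution $u$ with $du=-\tau_1^1$ exists precisely when $\tau_1^1$ is closed, by the Poincaré lemma, and then $\mu=e^u>0$ is nonvanishing. Thus the existence of a local parallel Darboux vector field is equivalent to the condition $d\tau_1^1=0$.

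The third and key step is to identify $d\tau_1^1$ with the normal curvature applied to $\xi$. Evaluating \eqref{eq:Codazzi} at $\zeta=\xi=\xi_1$ and using that $S_1$ is $h^2$-self adjoint (established just after equation \eqref{eq:basic6b}), the $\eta$-component $h^2(X,S_1Y)-h^2(Y,S_1X)$ vanishes identically, so $R_{\nabla^\perp}(X,Y)\xi$ is always a multiple of $\xi$. Its $\xi$-component is $h^1(X,S_1Y)-h^1(Y,S_1X)$, which by the Ricci equation \eqref{eq:basic6a} (again simplified by $\tau_1^2=0$) equals $d\tau_1^1(X,Y)$. Hence $R_{\nabla^\perp}(X,Y)\xi=d\tau_1^1(X,Y)\,\xi$, and the condition $R_{\nabla^\perp}\xi=0$ is exactly $d\tau_1^1=0$. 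Combining with the previous step establishes the equivalence. For uniqueness, if $\mu_1\xi$ and $\mu_2\xi$ are both parallel then $d\log|\mu_1/\mu_2|=-\tau_1^1+\tau_1^1=0$, so $\mu_1/\mu_2$ is locally constant.

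I expect the only delicate point to be the third step: one must verify that $R_{\nabla^\perp}(X,Y)\xi$ carries no $\eta$-component, which relies crucially on the self-adjointness of $S_1$ forced by the Darboux condition. Without it the curvature would not collapse to the scalar multiple $d\tau_1^1(X,Y)\,\xi$, and the clean Poincaré-lemma criterion for the integrating factor $\mu$ would break down.
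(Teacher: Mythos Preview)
Your proof is correct and follows essentially the same route as the paper's: both reduce the question to whether the $1$-form $\tau_1^1$ is locally exact (equivalently closed), identify $R_{\nabla^\perp}(X,Y)\xi$ with $d\tau_1^1(X,Y)\,\xi$ via equations \eqref{eq:Codazzi} and \eqref{eq:basic6a} together with $\tau_1^2=0$ and the $h^2$-self-adjointness of $S_1$, and then solve $d\log|\mu|=-\tau_1^1$ to produce the parallel scaling $e^{\lambda}\xi$. Your write-up is simply a more fleshed-out version of the paper's terse argument, making explicit the vanishing of the $\eta$-component and the Poincar\'e-lemma step.
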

\begin{proof}
If the Darboux vector field $\xi$ is parallel, $\tau_1^1=0$, and so equations \eqref{eq:Codazzi} and \eqref{eq:basic6a}
imply that $R_{\nabla^{\perp}}\xi=0$. Conversely, assuming $R_{\nabla^{\perp}}\xi=0$, the same equations imply that $\tau_1^1$ is locally exact.
Then $\exp(\lambda)\xi$ is parallel, where $d\lambda=-\tau_1^1$. Finally, if $\xi$ is parallel, $a\xi$ is parallel if and only if $a$ is constant.
For more details see \cite{Craizer}.
\end{proof}

\subsection{The apolarity condition}

Since $\tau_1^2=\tau_2^2=0$, equation \eqref{eq:CubicForm} says that  
\begin{equation*}
C_2(X,Y,Z)=(\nabla_Xh^2)(Y,Z),
\end{equation*}
for $X,Y,Z\in\mathfrak{X}(N)$. Equivalently, denoting $K(X,Y)=\nabla_XY-\hat{\nabla}_XY$, we have
\begin{equation*}
C_2(X,Y,Z)=-2h^2(K(X,Y),Z).
\end{equation*}
The cubic form $C_2$ is {\it apolar} with respect to $h^2$ if 
\begin{equation*}
tr_{h^2}C_2(X,\cdot,\cdot)=0,
\end{equation*}
for any $X\in \mathfrak{X}(N)$. This condition is equivalent to $tr_{h^2}K=0$. In \cite{Craizer}, the following proposition is proved:

\begin{prop}
The cubic form $C_2$ is apolar with respect to $h^2$ if and only if $\xi$ is parallel. 
\end{prop}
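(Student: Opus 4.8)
The plan is to reduce the apolarity of $C_2$ to the vanishing of the single $1$-form $\tau_1^1$, and then invoke the fact that this is exactly the condition for $\xi$ to be parallel. Recall that, since $\tau_1^2=0$, equation \eqref{eq:basic2} gives $D_X\xi=-S_1(X)+\tau_1^1(X)\xi$, so $\xi$ is parallel if and only if $\tau_1^1=0$. Working in an oriented $g$-orthonormal tangent frame $\{X_1,\dots,X_n\}$, where $g=h^2$, I would first rewrite the trace. Since $C_2(X,Y,Z)=(\nabla_Xh^2)(Y,Z)$ and $h^2(X_i,X_j)=\delta_{ij}$ is constant, expanding $(\nabla_Xh^2)(X_i,X_i)=X(h^2(X_i,X_i))-2h^2(\nabla_XX_i,X_i)$ yields
\begin{equation*}
tr_{h^2}C_2(X,\cdot,\cdot)=\sum_i(\nabla_Xh^2)(X_i,X_i)=-2\sum_i h^2(\nabla_XX_i,X_i),
\end{equation*}
so apolarity is equivalent to $\sum_i h^2(\nabla_XX_i,X_i)=0$ for every tangent $X$.

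The key step is to evaluate this tangential trace through the canonical volume form. Using the normalization $[X_1,\dots,X_n,\eta,\xi]=1$ that defines $\eta$, I would differentiate the (locally constant) function $\Theta=[X_1,\dots,X_n,\eta,\xi]$ along $X$, obtaining
\begin{equation*}
0=\sum_i[X_1,\dots,D_XX_i,\dots,X_n,\eta,\xi]+[X_1,\dots,X_n,D_X\eta,\xi]+[X_1,\dots,X_n,\eta,D_X\xi].
\end{equation*}
Expanding each bracket via \eqref{eq:basic} and \eqref{eq:basic2}, together with $\tau_1^2=\tau_2^2=0$, the normal components collapse: in the first sum the $\xi$- and $\eta$-parts of $D_XX_i=\nabla_XX_i+h^1(X,X_i)\xi+h^2(X,X_i)\eta$ repeat a column and drop out, leaving $\sum_i h^2(\nabla_XX_i,X_i)$; the middle bracket vanishes because $D_X\eta=-S_2(X)+\tau_2^1(X)\xi$ produces either a tangential direction in the $\eta$-slot or a repeated $\xi$; and in the last bracket $D_X\xi=-S_1(X)+\tau_1^1(X)\xi$ contributes only $\tau_1^1(X)$. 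Collecting the surviving terms gives the identity
\begin{equation*}
\sum_i h^2(\nabla_XX_i,X_i)=-\tau_1^1(X).
\end{equation*}

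Combining the two displays yields $tr_{h^2}C_2(X,\cdot,\cdot)=2\tau_1^1(X)$, so $C_2$ is apolar with respect to $h^2$ if and only if $\tau_1^1=0$, i.e. if and only if $\xi$ is parallel. The only delicate point — and the main, if modest, obstacle — is the determinant bookkeeping: one must verify that every term except $\sum_i h^2(\nabla_XX_i,X_i)$ and $\tau_1^1(X)$ vanishes, which rests precisely on the Darboux normalization $\tau_1^2=0$ and on the choice of $\eta$ giving $\tau_2^2=0$. The orientation ambiguity of $\Theta$ is harmless, since one may fix an oriented $g$-orthonormal frame field locally so that $\Theta\equiv1$, and both the apolarity and the parallelism conditions are local and independent of the frame.
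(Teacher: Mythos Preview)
Your argument is correct. The paper does not actually prove this proposition; it merely cites \cite{Craizer}. Your proof supplies the details and is sound: the reduction of apolarity to $\sum_i h^2(\nabla_XX_i,X_i)=0$ via the orthonormal frame, and the evaluation of this sum by differentiating the normalization $[X_1,\dots,X_n,\eta,\xi]=1$, both go through exactly as you describe. Indeed, the very identity you derive,
\[
\sum_i h^2(\nabla_XX_i,X_i)+\tau_1^1(X)=0,
\]
is precisely the relation $\sum_j\Gamma_{1j}^j+\tau_1^1(X_1)=0$ that the paper itself obtains later (in the proof that the singular set of the affine distance is the affine normal plane) by the same differentiation of the volume condition. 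So your method is entirely in the spirit of the paper's own computations, and the bookkeeping you flag as the only delicate point is handled correctly: the vanishing of the $D_X\eta$ bracket uses $\tau_2^2=0$, and the survival of only $\tau_1^1(X)$ from the $D_X\xi$ bracket uses $\tau_1^2=0$.

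One minor remark: since $h^2$ is only assumed non-degenerate (possibly indefinite), a $g$-orthonormal frame may satisfy $h^2(X_i,X_j)=\epsilon_i\delta_{ij}$ with $\epsilon_i=\pm 1$. The computation still yields $tr_{h^2}C_2(X,\cdot,\cdot)=2\tau_1^1(X)$ because the signs $\epsilon_i$ cancel between the trace and the inner product $h^2(\nabla_XX_i,X_i)$, but you may wish to note this explicitly.
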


\subsection{The Laplacian operator}

Denote by $\Delta$ the Laplacian operator with respect to the metric $g$. Recall that $\Delta(f)$ is the trace with respect to $g$
of the Hessian operator defined by
$$
Hess(f)(X,Y)=XY(f)-\hat\nabla_XY(f),
$$
where $\hat\nabla$ denotes the Levi-Civita connection of $g$ (see \cite{Nomizu}, p.64).

\begin{prop}
The Laplacian of $\phi$ belongs to the affine normal plane if and only if $\xi$ is parallel. In this case
\begin{equation}\label{eq:Lap}
\frac{1}{n}\Delta\phi=\eta-\lambda\xi,
\end{equation}
where $\lambda=-\frac{1}{n}tr_g(h^1)$.
\end{prop}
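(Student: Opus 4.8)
The plan is to compute the vector-valued Laplacian $\Delta\phi$ explicitly and then read off its tangential and normal parts. For a vector-valued map the Hessian is $\mathrm{Hess}(\phi)(X,Y)=D_XD_Y\phi-D_{\hat\nabla_XY}\phi$, with $\hat\nabla$ the Levi-Civita connection of $g$. Since $D_Y\phi=\phi_*Y$, I would feed this into the basic structure equation \eqref{eq:basic} for $D_X(\phi_*Y)$ and get
\[
\mathrm{Hess}(\phi)(X,Y)=\phi_*(\nabla_XY-\hat\nabla_XY)+h^1(X,Y)\xi+h^2(X,Y)\eta .
\]
Recognizing the difference tensor $K(X,Y)=\nabla_XY-\hat\nabla_XY$ introduced in the apolarity subsection, this becomes $\mathrm{Hess}(\phi)(X,Y)=\phi_*(K(X,Y))+h^1(X,Y)\xi+h^2(X,Y)\eta$.

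Next I would take the $g$-trace. Using the normalization that fixes $g=h^2$, so that $\mathrm{tr}_g(h^2)=n$, the trace of the Hessian yields
\[
\Delta\phi=\phi_*\bigl(\mathrm{tr}_g K\bigr)+\mathrm{tr}_g(h^1)\,\xi+n\,\eta .
\]
Here the first term is tangent to $N$ and the last two lie in $\A$. Therefore $\Delta\phi\in\A$ if and only if the tangential component vanishes, that is $\mathrm{tr}_g K=0$.

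The decisive step is then to observe that $\mathrm{tr}_g K=0$ is precisely the apolarity condition $\mathrm{tr}_{h^2}K=0$, because $g=h^2$. By the preceding proposition this is equivalent to $\xi$ being parallel, which establishes the claimed equivalence. When $\xi$ is parallel, the tangential term drops out and dividing by $n$ gives $\frac1n\Delta\phi=\eta+\frac1n\mathrm{tr}_g(h^1)\,\xi=\eta-\lambda\xi$ with $\lambda=-\frac1n\mathrm{tr}_g(h^1)$, exactly as stated.

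I expect the only genuinely delicate point to be the first computation: handling the vector-valued Hessian correctly and making sure the connection $\nabla$ appearing in \eqref{eq:basic} is compared against the Levi-Civita connection $\hat\nabla$ of $g$ through the difference tensor $K$, rather than conflated with it. Once this bookkeeping is in place, everything reduces to the normalization $g=h^2$ and the apolarity proposition, and the result follows with no further work.
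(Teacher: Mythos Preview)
Your proof is correct and follows essentially the same route as the paper: compute $D_X\phi_*Y-\phi_*(\hat\nabla_XY)=\phi_*(K(X,Y))+h^1(X,Y)\xi+h^2(X,Y)\eta$, take the $g$-trace using $g=h^2$, and invoke the apolarity proposition to identify $\mathrm{tr}_gK=0$ with $\xi$ parallel. Your write-up is in fact slightly more explicit than the paper's about the Hessian bookkeeping and the normalization $\mathrm{tr}_g(h^2)=n$, but there is no substantive difference in the argument.
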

\begin{proof}
Write 
\begin{equation*}
D_X\phi_*Y-\phi_*(\hat\nabla_XY)=\phi_{*}(K(X,Y))+h^1(X,Y)\xi+h^2(X,Y)\eta.
\end{equation*}
where $K(X,Y)=\nabla_XY-\hat\nabla_XY$. For $\xi$ parallel, the cubic form $C^2$ is apolar with respect to $h^2$, i.e., $tr_g(K)=0$. Thus 
$$
\Delta\phi=tr_g(h^1)\xi+n\eta,
$$
thus proving the proposition. Conversely, if $\Delta(\phi)$ belongs to the affine normal plane, $tr_g(K)=0$, which implies that $\xi$ is parallel.
\end{proof}

\subsection{Visual contours}

A submanifold $N\subset M$ is a {\it visual contour} if there exists $O\in\R^{n+2}$ such that the tangent space to $M$ at each point of $N$ passes through $O$. 
This class of submanifolds is the object of study of the {\it centro-affine differential geometry} and is important in computer graphics (\cite{Cipolla}).

For a visual contour, one can choose the Darboux vector field $\xi=\phi-O$ which is both parallel and also umbilic, i.e., $S_{\xi}$ is a multiple of the identity, for any $p\in N$. To prove the converse, 
we need the following lemma:

\begin{lem}\label{lem:ConstantCurvature}
Assume $n\geq 2$.  If the Darboux vector field $\xi$ is both umbilic and parallel, then $S_{1}=\alpha I$, for some constant $\alpha$. 
\end{lem}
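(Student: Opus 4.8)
The plan is to reformulate the two hypotheses and then differentiate once more, extracting the constancy of the eigenvalue from the flatness of the ambient connection $D$. Since $\xi$ is parallel we have $\tau_1^1=0$, so by \eqref{eq:basic2} its derivative is purely tangential, $D_X\xi=-S_1X$; since $\xi$ is umbilic, $S_1=\alpha I$ for some function $\alpha$ on $N$, and hence $D_X\xi=-\alpha X$ for every $X\in TN$. The whole content of the lemma is that this function $\alpha$ is locally constant, i.e.\ $d\alpha=0$, and the natural way to see this is an integrability (Codazzi-type) argument.

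First I would differentiate the relation $D_X\xi=-\alpha X$ a second time using \eqref{eq:basic}. Writing $D_X\big(D_Y\xi\big)=D_X(-\alpha Y)=-X(\alpha)Y-\alpha D_XY$ and expanding $D_XY$ by \eqref{eq:basic}, and doing the same for $D_Y(D_X\xi)$, I would then form the ambient curvature $D_XD_Y\xi-D_YD_X\xi-D_{[X,Y]}\xi$, which vanishes because $D$ is the flat connection of $\R^{n+2}$. In this difference the $\xi$- and $\eta$-components carry the factors $h^1(X,Y)-h^1(Y,X)$ and $h^2(X,Y)-h^2(Y,X)$, which are zero by the symmetry of $h^1$ and $h^2$; the terms $-\alpha(\nabla_XY-\nabla_YX)$ cancel against $D_{[X,Y]}\xi=-\alpha[X,Y]$ since $\nabla$ is torsion free. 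What survives is only the tangential part coming from the derivatives of $\alpha$.

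This leaves the identity $Y(\alpha)\,X-X(\alpha)\,Y=0$ for all $X,Y\in TN$. Here the hypothesis $n\geq 2$ enters decisively: choosing $X$ and $Y$ linearly independent forces $X(\alpha)=Y(\alpha)=0$, and as $X$ ranges over $TN$ this gives $d\alpha=0$, so $\alpha$ is constant on each connected component. I expect the only delicate point to be the bookkeeping of the second covariant derivatives and checking that the transversal ($\xi$, $\eta$) terms and the connection terms cancel exactly; the argument is the affine analogue of the classical fact that a connected totally umbilic hypersurface of dimension at least two has constant principal curvature, and it is precisely the excluded case $n=1$ for which a single tangent direction imposes no relation on $d\alpha$.
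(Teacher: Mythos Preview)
Your argument is correct and is exactly the paper's proof: use the flatness of $D$ to write $D_XD_Y\xi-D_YD_X\xi-D_{[X,Y]}\xi=0$, obtain $X(\alpha)Y-Y(\alpha)X=0$, and conclude $\alpha$ is constant from $n\geq 2$. The extra bookkeeping you provide (cancellation of the $\xi,\eta$ components via symmetry of $h^1,h^2$ and of the connection terms via torsion-freeness) simply fills in what the paper leaves implicit.
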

\begin{proof}
For $p\in N$, write $S_{1}(X)=\alpha(p)X$. From  equation 
$$
D_XD_Y\xi-D_YD_X\xi-D_{[X,Y]}\xi=0,
$$
we obtain
$$
X(\alpha)Y-Y(\alpha)X=0.
$$
Taking $X,Y$ linearly independent, we conclude that $X(\alpha)=Y(\alpha)=0$, which implies that $\alpha$ is constant. 
\end{proof}

\begin{cor}\label{cor:VisualContour}
Assume $n\geq 2$. If there exists a Darboux vector field $\xi$ which is both umbilic and parallel, then $N\subset M$ is a visual contour. 
\end{cor}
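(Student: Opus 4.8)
The plan is to feed Lemma~\ref{lem:ConstantCurvature} into a single direct computation that produces the required point $O$. By the lemma, the hypotheses force $S_1=\alpha I$ with $\alpha\in\R$ \emph{constant}, so the entire argument reduces to exhibiting a fixed point lying in every affine tangent space of $M$ along $N$; the natural candidate is built from $\phi$ and $\xi$ themselves.

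First I would record what ``parallel Darboux'' means at the level of equation~\eqref{eq:basic2}. Being Darboux gives $\tau_1^2=0$ and being parallel gives $\tau_1^1=0$, so that equation collapses to
\begin{equation*}
D_X\xi=-\phi_*(S_1(X))=-\alpha\,\phi_*X=-\alpha X,
\end{equation*}
where in the last step I use the standing identification of $\phi$ with the inclusion, so that $D_X\phi=\phi_*X=X$.

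The heart of the proof is then to guess $O:=\phi+\tfrac{1}{\alpha}\,\xi$ (assuming $\alpha\neq0$) and to check that it is constant:
\begin{equation*}
D_XO=D_X\phi+\tfrac{1}{\alpha}\,D_X\xi=X+\tfrac{1}{\alpha}(-\alpha X)=0
\end{equation*}
for every $X\in TN$. Hence $O$ is a fixed point of $\R^{n+2}$. To conclude that $N\subset M$ is a visual contour I would observe that $O-\phi(p)=\tfrac{1}{\alpha}\,\xi(p)$ is tangent to $M$, because $\xi$ is tangent to $M$ by the definition of a Darboux field; therefore $O$ lies in the affine tangent space of $M$ at $\phi(p)$, for every $p\in N$, which is exactly the defining property of a visual contour.

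The one point requiring care --- and the only real obstacle --- is the degenerate case $\alpha=0$. Then $D_X\xi=0$, so $\xi$ is a constant direction and the construction of a finite $O$ breaks down: the tangent hyperplanes of $M$ along $N$ share only the direction $\xi$, i.e.\ the ``center'' escapes to infinity. I would either exclude this case (it corresponds to a cylinder-type configuration rather than a genuine cone) or interpret the common point projectively; apart from this wrinkle, the statement is a two-line verification once Lemma~\ref{lem:ConstantCurvature} is in hand.
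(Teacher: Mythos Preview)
Your argument is correct and follows exactly the paper's own proof: invoke Lemma~\ref{lem:ConstantCurvature} to get $S_1=\alpha I$ with $\alpha$ constant, set $O=\phi+\alpha^{-1}\xi$, and check $D_XO=0$. You are in fact slightly more careful than the paper, which silently assumes $\alpha\neq 0$ without commenting on the degenerate (cylinder-type) case.
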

\begin{proof}
Take $\xi$ umbilic and parallel. By the lemma \ref{lem:ConstantCurvature}, $S_{1}=\sigma I$, for some constant $\sigma$. Writing  $O=p+\sigma^{-1}\xi$, we get $X(O)=0$ and so $O$ is constant, which implies that $N\subset M$ is a visual contour.
\end{proof}

\section{Normally flat immersions}\label{sec:ParallelVectorFields}

An immersion $N\subset M$ is {\it normally flat} with respect to the affine normal plane bundle $\A$ if $R_{\nabla^{\perp}}=0$. 
An immersion $N\subset M$ is {\it (totally) umbilic} if $S_{\zeta}$ is a multiple of the identity, 
for any $\zeta\in\A$, for any $p\in N$. In this section we study the conditions under which the immersions are normally flat and umbilic.

\subsection{Conditions for normally flat immersions}

\begin{prop}
Fix a parallel Darboux vector field $\xi$. Then there exists a parallel vector field $\eta\in\A$ linearly independent with $\xi$ if and only if $\nabla^{\perp}$ is flat. 
When the parallel vector field $\eta$ exists, any other parallel vector field in $\A$ is necessarily of the form $\eta+a\xi$, for some constant $a$.
\end{prop}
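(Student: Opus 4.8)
The plan is to reduce the whole statement to a single scalar condition on the one connection form that survives once $\xi$ is parallel, namely $d\tau_2^1=0$. First I would record which forms vanish in the frame $\{\xi,\eta\}$. Because $\xi$ is a Darboux field we already have $\tau_1^2=0$, and the affine normal frame gives $\tau_2^2=0$; the hypothesis that $\xi$ is parallel means exactly $\nabla^{\perp}_X\xi=0$, i.e. $\tau_1^1=0$ as well. Hence $\nabla^{\perp}_X\xi=0$ and $\nabla^{\perp}_X\eta=\tau_2^1(X)\xi$, so $\tau_2^1$ is the only potentially nonzero entry of the normal connection.

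Then I would compute the curvature on the frame. Feeding $\zeta=\xi$ and $\zeta=\eta$ into \eqref{eq:Codazzi} and using that $S_1,S_2$ are $g$-self adjoint with $g=h^2$, the $\eta$-components of $R_{\nabla^{\perp}}(X,Y)\xi$ and $R_{\nabla^{\perp}}(X,Y)\eta$ vanish. For the $\xi$-components I would invoke the Ricci equations \eqref{eq:basic6a} and \eqref{eq:basic6c}: after discarding the terms containing the vanishing forms $\tau_1^1,\tau_1^2,\tau_2^2$, they collapse to $R_{\nabla^{\perp}}(X,Y)\xi=0$ and $R_{\nabla^{\perp}}(X,Y)\eta=d\tau_2^1(X,Y)\,\xi$. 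Since $R_{\nabla^{\perp}}$ is linear in $\zeta$, this already shows that $\nabla^{\perp}$ is flat if and only if $d\tau_2^1=0$.

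Next I would search for a parallel complement directly. Writing a general section of $\A$ as $\zeta=a\xi+b\eta$ with $a,b$ functions and using the two covariant derivatives above, the Leibniz rule gives $\nabla^{\perp}_X\zeta=\bigl(X(a)+b\,\tau_2^1(X)\bigr)\xi+X(b)\,\eta$. Thus $\zeta$ is parallel precisely when $b$ is constant and $da=-b\,\tau_2^1$; for $\zeta$ to be independent of $\xi$ one needs $b\neq0$, so such a section exists if and only if $\tau_2^1$ is locally exact, which by the Poincar\'e lemma is equivalent to $d\tau_2^1=0$. Combining this with the curvature computation yields the stated equivalence (locally, exactly as in the earlier proposition on parallel Darboux fields). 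For the uniqueness clause, if $\eta$ is itself parallel and $\zeta=a\xi+b\eta$ is another parallel section, the same formula, now with both $\xi$ and $\eta$ parallel, forces $X(a)=X(b)=0$, so $a,b$ are constant; up to the irrelevant rescaling of $\eta$, any such section transverse to $\xi$ is $\eta+a\xi$ with $a$ constant.

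The main obstacle I anticipate is not any single computation but the bookkeeping that makes the curvature collapse: one must use the self-adjointness (through $g=h^2$) to annihilate the $h^2$-terms and the three vanishing connection forms to reduce the Ricci identities to the clean expression $R_{\nabla^{\perp}}(X,Y)\eta=d\tau_2^1(X,Y)\xi$. Once that is in place, the chain ``$\nabla^{\perp}$ flat $\iff d\tau_2^1=0\iff$ parallel $\eta$ exists'' is immediate, the only genuinely nonalgebraic input being the Poincar\'e lemma, which is precisely why the existence assertion is inherently local.
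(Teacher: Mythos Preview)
Your proposal is correct and follows essentially the same route as the paper's proof: both reduce the question to the single connection form $\tau_2^1$, use the Ricci/Codazzi identities to show that flatness of $\nabla^{\perp}$ is equivalent to $d\tau_2^1=0$, invoke the Poincar\'e lemma to produce a function $\lambda$ with $d\lambda=-\tau_2^1$, and then observe that $\eta+\lambda\xi$ is parallel. Your write-up is simply more explicit about the intermediate bookkeeping (the vanishing of $\tau_1^1,\tau_1^2,\tau_2^2$ and the self-adjointness of $S_1,S_2$) than the paper's terse two-line argument.
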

\begin{proof}
If $\eta\in\A$ is parallel, $\tau_2^1=0$, and so equations \eqref{eq:Codazzi} and \eqref{eq:basic6c}
imply that $R_{\nabla^{\perp}}\eta=0$. Thus $\nabla^{\perp}$ is flat. Conversely, if $R_{\nabla^{\perp}}\eta=0$, 
the same equations imply that $\tau_2^1$ is locally exact. Then $\eta+\lambda\xi$ is parallel, where $d\lambda=-\tau_2^1$. Finally, if $\eta$ is parallel,
$\eta+a\xi$ is parallel if and only if $a$ is constant.
\end{proof}

\begin{cor}
An immersion $N\subset M$ is normally flat if and only if it admits a parallel Darboux vector field $\xi$ and a parallel vector 
field $\eta\in\A$, linearly independent from $\xi$. The vector field $\xi$ is unique up to a multiplicative constant, while $\eta$ is unique up to the addition
of $a\xi$, $a$ constant. 
\end{cor}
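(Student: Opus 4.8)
The plan is to assemble the corollary from the two preceding propositions together with the tensoriality of the normal curvature operator, so that no genuinely new computation is needed.

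For the forward implication, I would assume $N\subset M$ is normally flat, that is $R_{\nabla^{\perp}}=0$. Applying the vanishing curvature to the Darboux direction yields $R_{\nabla^{\perp}}\xi=0$, whence the earlier proposition characterizing parallel Darboux vector fields guarantees a local parallel Darboux field $\xi$, unique up to a multiplicative constant. Fixing this $\xi$ and invoking the immediately preceding proposition, which states that for a fixed parallel Darboux $\xi$ a linearly independent parallel $\eta\in\A$ exists precisely when $\nabla^{\perp}$ is flat, the hypothesis $R_{\nabla^{\perp}}=0$ produces the desired $\eta$, unique up to the addition of $a\xi$ with $a$ constant. This simultaneously settles the two uniqueness claims.

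For the converse, I would suppose that both a parallel Darboux $\xi$ and a linearly independent parallel $\eta\in\A$ exist and deduce $R_{\nabla^{\perp}}=0$. Since $\xi$ and $\eta$ are parallel, the computations recorded in the two propositions (via $\tau_1^1=0$ and $\tau_2^1=0$, together with \eqref{eq:Codazzi}, \eqref{eq:basic6a} and \eqref{eq:basic6c}) give $R_{\nabla^{\perp}}\xi=0$ and $R_{\nabla^{\perp}}\eta=0$. The key observation is that $R_{\nabla^{\perp}}(X,Y)\zeta$ is tensorial, i.e.\ function-linear, in its last slot: for a function $f$ and a section $\zeta_0$, the Leibniz terms produced by $\nabla^{\perp}_X\nabla^{\perp}_Y(f\zeta_0)$ and $\nabla^{\perp}_Y\nabla^{\perp}_X(f\zeta_0)$ cancel in pairs, while the remaining $(XY-YX)f$ contribution is absorbed by the bracket term $\nabla^{\perp}_{[X,Y]}(f\zeta_0)$, leaving $R_{\nabla^{\perp}}(X,Y)(f\zeta_0)=f\,R_{\nabla^{\perp}}(X,Y)\zeta_0$. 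Because $\{\xi,\eta\}$ is a frame for the rank-two bundle $\A$, any $\zeta\in\A$ may be written $\zeta=a\xi+b\eta$ for functions $a,b$, and tensoriality then gives $R_{\nabla^{\perp}}(X,Y)\zeta=a\,R_{\nabla^{\perp}}(X,Y)\xi+b\,R_{\nabla^{\perp}}(X,Y)\eta=0$ for all $X,Y$. Hence $R_{\nabla^{\perp}}=0$ and the immersion is normally flat.

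The argument is almost entirely bookkeeping; the only point requiring care, and the nearest thing to an obstacle, is verifying the tensoriality of $R_{\nabla^{\perp}}$ in its last slot, since it is precisely this property that lets one pass from vanishing on the parallel frame $\{\xi,\eta\}$ to vanishing on all of $\A$. Everything else is a direct citation of the two propositions.
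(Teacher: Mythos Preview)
Your proposal is correct and matches the paper's approach. The paper gives no separate proof for this corollary, treating it as an immediate consequence of the two preceding propositions; your argument is precisely this combination, with the tensoriality of $R_{\nabla^{\perp}}$ in the normal slot spelled out explicitly (a step the paper's proof of the immediately preceding proposition uses implicitly when it passes from ``$R_{\nabla^{\perp}}\eta=0$'' to ``thus $\nabla^{\perp}$ is flat'').
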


\subsection{Conditions for umbilic and normally flat immersions}

\begin{prop}\label{prop:UNF}
Assume that $n\geq 2$. If $N\subset M$ is umbilic and normally flat, then it is a visual contour and there exists a point $Q\neq O$ contained in the intersection of the affine normal planes $\A(p)$, $p\in N$. 
Conversely, if $N\subset M$ is a visual contour and the intersection of the normal planes $\A(p)$, $p\in N$, contains a point $Q\neq O$, then the immersion is umbilic and normally flat. 
\end{prop}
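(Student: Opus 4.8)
The plan is to prove both implications by exhibiting, in each case, a pair of parallel umbilic vector fields in $\A$ and reading off the two fixed points from them. The whole argument rests on the Corollary characterizing normal flatness (existence of a parallel Darboux field $\xi$ together with a parallel $\eta\in\A$ linearly independent from $\xi$) and on an observation extracted from the proof of Lemma~\ref{lem:ConstantCurvature}: that computation used only parallelism, umbilicity and the flatness of $D$, so it applies verbatim to any parallel umbilic field $\zeta\in\A$ and yields $S_\zeta=\mathrm{const}\cdot I$.

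For the forward implication I would assume $N\subset M$ umbilic and normally flat. By the Corollary there are a parallel Darboux field $\xi$ and a parallel $\eta\in\A$ with $\xi,\eta$ linearly independent. Umbilicity makes $S_\xi$ and $S_\eta$ multiples of the identity; since $\xi$ is also parallel, Corollary~\ref{cor:VisualContour} shows $N\subset M$ is a visual contour, with $S_\xi=\sigma I$ for a constant $\sigma\neq 0$ and $O=\phi+\sigma^{-1}\xi$. Applying the Lemma's computation to $\eta$ gives $S_\eta=\beta I$ with $\beta$ constant. I then look for fixed points of the form $\phi+a\xi+b\eta$ with $a,b$ constant: using $D_X\xi=-\sigma\phi_*X$ and $D_X\eta=-\beta\phi_*X$ one finds $D_X(\phi+a\xi+b\eta)=(1-a\sigma-b\beta)\phi_*X$, so every choice with $a\sigma+b\beta=1$ yields a point that is fixed and lies in $\phi+\A$. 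This is an affine line of fixed points in the plane; $O$ sits on it, and since $\sigma\neq 0$ it is one-dimensional, hence contains a second point $Q\neq O$ (namely $Q=\phi+\beta^{-1}\eta$ when $\beta\neq0$, or $Q=O+\eta$, with $\eta$ constant, when $\beta=0$).

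For the converse I would assume $N\subset M$ is a visual contour with apex $O$ and that $Q\neq O$ lies in every $\A(p)$. Setting $\xi=\phi-O$, the visual-contour condition makes $\xi$ tangent to $M$, hence a Darboux field, and $D_X\xi=\phi_*X$ is tangent to $N$, so $\xi$ is parallel with $S_\xi=-I$. Setting $\zeta=Q-\phi$, the hypothesis $Q\in\A(p)$ gives $\zeta\in\A$, while $D_X\zeta=-\phi_*X$ is tangent to $N$, so $\zeta$ is parallel with $S_\zeta=I$. The fields $\xi,\zeta$ are linearly independent, hence a frame for $\A$ (otherwise $O,\phi(p),Q$ would be collinear for all $p$, forcing the image into a line, impossible for $n\geq2$). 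By the Corollary the immersion is then normally flat, and since $S_\omega$ is pointwise linear in $\omega\in\A$ we get $S_{a\xi+b\zeta}=(b-a)I$ for all functions $a,b$, so every shape operator is a multiple of the identity and the immersion is umbilic.

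The main obstacle is the forward direction's production of the second fixed point $Q$: one must first upgrade the umbilicity plus parallelism of $\eta$ to the \emph{constancy} of its shape coefficient via the adapted Lemma, and then handle the degenerate case $\beta=0$ separately, since there $Q=\phi+\beta^{-1}\eta$ is undefined and one instead translates $O$ along the constant direction $\eta$. Verifying $Q\neq O$ in the forward direction and the linear independence of $\xi,\zeta$ in the converse both reduce to the immersion not degenerating into a line, which is precisely where the hypothesis $n\geq2$ is used.
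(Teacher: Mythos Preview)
Your argument is correct and follows the same route as the paper's: exhibit a pair of parallel umbilic vector fields in $\A$, use the computation behind Lemma~\ref{lem:ConstantCurvature} to make their shape coefficients constant, and read off the fixed points $O$ and $Q$ (and, for the converse, take $\xi=\phi-O$, $\eta=\phi-Q$ and check they are parallel and umbilic). You are in fact more careful than the paper on two points: the paper simply sets $Q=\phi+\mu^{-1}\eta$ without discussing the degenerate case $\mu=0$, which you handle via $Q=O+\eta$ with $\eta$ constant, and the paper does not argue that $\phi-O$ and $\phi-Q$ are linearly independent, which you at least reduce to the nondegeneracy of the immersion.
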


\begin{proof}
Assuming $N\subset M$ is umbilic and normally flat, we can choose $\xi$ umbilic and parallel. Then corollary \ref{cor:VisualContour} implies that $N\subset M$ is a visual contour, 
and so we can assume $\xi=\phi-O$. 
We can also choose $\eta$ umbilic and parallel. So we may write $S_{\eta}=\mu I$, and, by Lemma \ref{lem:ConstantCurvature}, $\mu$ is constant.
Writing $Q=\phi+\mu^{-1}\eta$, we have $Q\in \A(p)$ and $D_XQ=0$, for any $X\in TN$, which implies that $Q$ is independent of $p\in N$. 

Conversely, if $N\subset M$ is a visual contour and $Q\in\A(p)$, we may assume $\xi=\phi-O$ and define $\eta=\phi-Q$. Then $\eta$ is umbilic and parallel, which implies that
$N\subset M$ is umbilic and normally flat.
\end{proof}

\section{Affine focal sets}

In this section we define the affine distance and show that its singular set coincides with the affine normal plane. We give also conditions 
for the regularity of the affine focal set, which is the bifurcation set of the affine distance. Finally, by comparing with the envelope of tangent spaces, we show that
all simple singularities are realizable.

\subsection{Affine distance and its singular set}

The affine distance $\Delta:\R^{n+2}\times U\to\R$ is defined by 
\begin{equation}\label{eq:DefineDelta1}
\Delta(x,u)=\left[ x-\phi(u), X_1(u),..., X_n(u), \xi(u) \right].
\end{equation}

\begin{lem}
The singular set of $\Delta$ is $\{\A(p)| p\in N\}$.
\end{lem}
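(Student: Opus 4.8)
The plan is to show that a point $x \in \R^{n+2}$ is a singular point of $\Delta(\cdot, u)$ for fixed $u$ precisely when $x$ lies in the affine normal plane $\A(u)$. First I would fix $u$ and compute the partial derivatives of $\Delta$ with respect to the variable $u$, since $\Delta$ is already smooth (indeed multilinear) in $x$. The singular set of $\Delta$ consists of the points $(x,u)$ where all partial derivatives $\partial \Delta / \partial u_k$ vanish, that is, where $\Delta(\cdot, u)$ fails to be a submersion in the $u$-direction at $(x,u)$. Concretely, I would differentiate the determinant expression in \eqref{eq:DefineDelta1} using the product (Leibniz) rule for the multilinear volume form, noting that differentiation in the direction $X_k$ hits each of the $n+2$ slots in turn.

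The key computational step is to evaluate the directional derivative $X_k \Delta(x, u)$. Writing $\phi$ for the position, the first slot contributes $\left[ -\phi_* X_k, X_1, \ldots, X_n, \xi \right]$, the $X_j$-slots contribute terms involving $D_{X_k} X_j$, and the last slot contributes $\left[ x - \phi, X_1, \ldots, X_n, D_{X_k}\xi \right]$. Now I would invoke the structure equations from Section 2: by \eqref{eq:basic}, $D_{X_k} X_j = \nabla_{X_k} X_j + h^1(X_k, X_j)\xi + h^2(X_k, X_j)\eta$, and since the Darboux field satisfies $D_{X}\xi = -\phi_*(S_1 X) + \tau_1^1(X)\xi$ (here $\tau_1^2 = 0$), every term of $D_{X_k}\xi$ lies in the span of the $X_j$ and $\xi$. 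The crucial simplification is that any volume-form term containing two entries from $\{X_1, \ldots, X_n, \xi\}$ in a determinant whose remaining slots are already filled by the orthonormal frame and $\xi$ must vanish by repetition. After cancelling these degenerate determinants, I expect the surviving expression to reduce to something of the form $\left[ x - \phi, X_1, \ldots, X_n, \eta \right]$ times a nonzero factor (coming from the $h^2 = g$ normalization), up to tangential corrections that also vanish by repetition.

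I would then read off that $X_k \Delta(x,u) = 0$ for all $k$ is equivalent to the single condition $\left[ x - \phi, X_1, \ldots, X_n, \eta \right] = 0$ together with $\Delta(x,u) = \left[ x-\phi, X_1, \ldots, X_n, \xi\right] = 0$. These two determinants vanish exactly when $x - \phi$ lies in the span of $\xi$ and $\eta$, i.e. when $x \in \phi(u) + \A(u) = \A(u)$ as an affine plane. Since $\{X_1, \ldots, X_n, \xi, \eta\}$ is a basis of $\R^{n+2}$, the vanishing of both bracket expressions is equivalent to the coefficients of $x - \phi$ along $X_1, \ldots, X_n$ being zero, which is precisely membership in the affine normal plane.

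The main obstacle I anticipate is the bookkeeping in the derivative of the last slot: the term $\left[ x - \phi, X_1, \ldots, X_n, D_{X_k}\xi \right]$ does not automatically vanish, because $D_{X_k}\xi$ contains the tangential part $-\phi_*(S_1 X_k)$, and $x - \phi$ is a genuinely free vector (not tangent to $N$). I would need to track whether this contribution is absorbed by the $\eta$-coefficient from the $X_j$-slot terms or whether it produces an extra constraint; the expectation, based on the fact that $\xi$ is Darboux and the frame is $g$-orthonormal so that $h^2 = g$, is that the $S_1$-term only affects tangential brackets that vanish by repetition, leaving the clean characterization. Verifying this cancellation carefully — and handling the normalization constant from $|\det G_{\mathfrak{u}}|$ that defines $g$ — is where the real work lies, though none of it is conceptually difficult once the Leibniz expansion is organized slot by slot.
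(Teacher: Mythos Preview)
Your overall strategy—differentiate the volume form slot by slot via Leibniz and use the structure equations—is exactly what the paper does, but the execution goes wrong at the crucial step and the final linear-algebra reading is inverted.

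\medskip

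\textbf{Where $\eta$ actually lands.} When you differentiate the $j$-th slot, the $\eta$-contribution is $h^2(X_k,X_j)\eta=\delta_{kj}\,\eta$ (because the frame is $g$-orthonormal and $g=h^2$). Hence $\eta$ appears only for $j=k$, \emph{replacing} $X_k$ in its own slot while $\xi$ stays in the last slot. The paper obtains
\[
X_k\Delta \;=\;\bigl[\,x-\phi,\,X_1,\ldots,\underset{(k)}{\eta},\ldots,X_n,\,\xi\,\bigr],
\]
not $[\,x-\phi,X_1,\ldots,X_n,\eta\,]$. These are $n$ genuinely different determinants, one for each $k$.

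\medskip

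\textbf{The linear algebra is backwards.} Write $x-\phi=\sum_i a_iX_i+b\eta+c\xi$. The paper's expression gives $X_k\Delta=\pm a_k$, so the vanishing of all $X_k\Delta$ forces $a_1=\cdots=a_n=0$, i.e.\ $x-\phi\in\operatorname{span}(\xi,\eta)=\A$. By contrast, your two conditions $[\,x-\phi,X_1,\ldots,X_n,\eta\,]=0$ and $\Delta=[\,x-\phi,X_1,\ldots,X_n,\xi\,]=0$ read off as $c=0$ and $b=0$ respectively, i.e.\ $x-\phi\in T_pN$, the opposite of what is claimed. Moreover, the condition $\Delta=0$ is not part of the singular set at all; on $\A$ one has $\Delta=b\neq 0$ in general.

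\medskip

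\textbf{The obstacle you flag is not the real one.} The term $[\,x-\phi,X_1,\ldots,X_n,-S_1X_k\,]$ vanishes automatically: $S_1X_k$ lies in $\operatorname{span}(X_1,\ldots,X_n)$, so the last $n+1$ columns are linearly dependent regardless of $x-\phi$. The genuine residual is the scalar $\bigl(\sum_j\Gamma_{kj}^{\,j}+\tau_1^1(X_k)\bigr)\Delta$ coming from the diagonal connection coefficients and the $\tau_1^1$-part of $D_{X_k}\xi$. The paper kills this by differentiating the normalization $[X_1,\ldots,X_n,\eta,\xi]=1$, which yields precisely $\sum_j\Gamma_{kj}^{\,j}+\tau_1^1(X_k)=0$. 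This is the step your outline does not anticipate.
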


\begin{proof}
Differentiating equation \eqref{eq:DefineDelta1} with respect to $u_1$ we obtain
\begin{equation*}
\frac{\partial\Delta}{\partial u_1}=\left(\sum_{j=1}^n \Gamma_{1j}^j +\tau_1^1(X_1) \right)\Delta+\left[ x-\phi, \eta, X_2, ...,X_n, \xi \right]=0.
\end{equation*}
Differentiating $[X_1,...,X_n,\eta,\xi]=1$ with respect to $u_1$ we obtain
$$
\sum_{j=1}^n\Gamma_{1j}^j+\tau_1^1(X_1)=0. 
$$
We conclude that
\begin{equation*}\label{eq:DeriveDeltaU}
\frac{\partial\Delta}{\partial u_1}=\left[ x-\phi, \eta, X_2,....X_n, \xi \right].
\end{equation*}
Similarly, differentiating equation \eqref{eq:DefineDelta} with respect to $u_k$ we obtain
\begin{equation}\label{eq:DeriveDeltaV}
\frac{\partial\Delta}{\partial u_k}=\left[ x-\phi, X_1,.., \eta, ...,X_n, \xi \right].
\end{equation}
We conclude that the singular set of $\Delta$ is defined by  $x-\phi=a\xi+b\eta$, $a,b\in\R$, and the lemma is proved.
\end{proof}

\subsection{The bifurcation set of $\Delta$}

Take a $g$-orthonormal basis $\{X_1,...,X_n\}$ formed by eigenvectors of $S_2$, i.e., $S_2X_j=\mu_jX_j$. 
Write also
\begin{equation*}
S_{1} (X_k)=\sum_{j=1}^n \sigma_{kj} X_j.
\end{equation*}

\begin{prop}\label{prop:DegreeN}
Write $x=\phi+a\xi+b\eta$. The bifurcation set $\mathcal{B}_p$ of $\Delta$ at $p$ is given by $q(a,b)=0$, where $q$ is a polynomial of degree at most $n$ in $(a,b)$. 
\end{prop}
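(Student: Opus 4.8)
The plan is to identify $\mathcal{B}_p$ as the locus where the critical point $u=p$ of the function $u\mapsto\Delta(x,u)$ degenerates. By the preceding lemma the critical set of $\Delta$ is the affine normal plane, so for $x=\phi+a\xi+b\eta$ the point $u=p$ is automatically critical; it is degenerate precisely when the Hessian matrix $H=\left(\partial^2\Delta/\partial u_i\partial u_j\right)_{i,j=1}^n$, evaluated at $u=p$, is singular. Thus $\mathcal{B}_p=\{(a,b):\det H=0\}$, and the whole statement reduces to showing that every entry of $H$ is an affine function of $(a,b)$, for then its $n\times n$ determinant is automatically a polynomial of degree at most $n$.

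To compute $H$ I would differentiate equation \eqref{eq:DeriveDeltaV} once more with respect to $u_i$. Since the canonical volume form is parallel for $D$, this produces a sum of determinants in which a single column of $[x-\phi,X_1,\dots,\eta,\dots,X_n,\xi]$ (the $\eta$ sitting in the $k$-th slot) is replaced by its $D_{X_i}$-derivative. Into these I would substitute the structure equations \eqref{eq:basic} and \eqref{eq:basic2}, using $\tau_1^2=\tau_2^2=0$, the normalization $h^2(X_i,X_j)=\delta_{ij}$, and the eigenbasis relations $S_2X_j=\mu_jX_j$, $S_1X_k=\sum_j\sigma_{kj}X_j$. After expanding $x-\phi=a\xi+b\eta$ by multilinearity, the vast majority of the resulting determinants vanish because one of the vectors $X_1,\dots,X_n,\eta,\xi$ appears in two columns at once.

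The decisive observation is how the position vector $a\xi+b\eta$ enters. In the term where the first column itself is differentiated it becomes $D_{X_i}(x-\phi)=-X_i$, contributing something of degree zero in $(a,b)$. In every other term $a\xi+b\eta$ survives undifferentiated; expanding it linearly and discarding the vanishing determinants leaves at most one surviving piece, carrying precisely one factor of $a$ or one factor of $b$. Concretely, the only nonzero contributions are a constant multiple of $\delta_{ik}$ (from the $-X_i$ term), a multiple of $b\,\mu_k\delta_{ik}$ (when the $\eta$-column is differentiated), and a multiple of $a\,\sigma_{ik}$ (when the $\xi$-column is differentiated); the terms coming from differentiating a tangent column $X_j$, $j\neq k$, vanish identically since both $\eta$ and $\xi$ remain present. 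Hence each entry has the affine form $H_{ik}=c_{ik}+a\,\alpha_{ik}+b\,\beta_{ik}$ with coefficients independent of $(a,b)$; no quadratic or mixed $ab$ terms can occur, since $x-\phi$ contributes at most one power of $a$ or $b$ to any second derivative. Up to sign, $H$ is in fact a constant combination of $I$, $aS_1$ and $bS_2$.

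It then follows that $q(a,b)=\det H$, being the determinant of an $n\times n$ matrix whose entries are polynomials of degree at most $1$ in $(a,b)$, is a polynomial of degree at most $n$, and $\mathcal{B}_p$ is its zero set. I expect the main obstacle to be purely clerical: tracking the signs of the surviving determinants after reordering their columns into the reference frame $(X_1,\dots,X_n,\eta,\xi)$, and verifying that the terms I claim to vanish really do. The degree bound itself is robust, depending only on the linearity of $x-\phi$ in $(a,b)$ rather than on the precise values of the coefficients.
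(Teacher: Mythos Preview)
Your proposal is correct and follows exactly the paper's approach: differentiate \eqref{eq:DeriveDeltaV} once more, substitute $x-\phi=a\xi+b\eta$, observe that every entry of the Hessian is affine in $(a,b)$, and take the determinant. The paper is simply terser, recording the outcome of the computation as $\partial^2\Delta/\partial u_k^2=1-b\mu_k-a\sigma_{kk}$ and $\partial^2\Delta/\partial u_k\partial u_l=-a\sigma_{kl}$, which confirms your claim that $H=I-aS_1-bS_2$ up to sign.
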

\begin{proof}
Differentiating equations \eqref{eq:DeriveDeltaV} we obtain 
$$
\frac{\partial^2\Delta}{\partial u_k^2}=1-b\mu_{k}-a\sigma_{kk},\ \ \frac{\partial^2\Delta}{\partial u_k\partial u_l}=-a\sigma_{kl}.
$$
Taking $q(a,b)=\det(D^2\Delta)$, the lemma is proved.
\end{proof}

\begin{cor}
For an umbilic and normally flat immersion, the affine focal set reduces to a line.
\end{cor}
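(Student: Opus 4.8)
The plan is to read the conclusion directly off Proposition~\ref{prop:DegreeN} after specializing the shape operators to the umbilic, normally flat situation. First I would use the construction in the proof of Proposition~\ref{prop:UNF} to choose the frame $\{\xi,\eta\}$ with both $\xi$ and $\eta$ umbilic and parallel. Applying Lemma~\ref{lem:ConstantCurvature} to $\xi$ and then to $\eta$ forces $S_1=\sigma I$ and $S_2=\mu I$ with $\sigma,\mu\in\R$ \emph{constant}. In the notation of Proposition~\ref{prop:DegreeN} this says exactly $\mu_k=\mu$ for every $k$ and $\sigma_{kl}=\sigma\,\delta_{kl}$.

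Next I would substitute these values into the Hessian computed in the proof of Proposition~\ref{prop:DegreeN}. The off-diagonal entries $\partial^2\Delta/\partial u_k\partial u_l=-a\sigma_{kl}$ vanish for $k\neq l$, while every diagonal entry equals $1-a\sigma-b\mu$. Hence $D^2\Delta=(1-a\sigma-b\mu)\,I_n$ and $q(a,b)=(1-a\sigma-b\mu)^n$, so the bifurcation set $\mathcal{B}_p$ inside the affine normal plane $\A(p)$ is cut out by the single linear equation $a\sigma+b\mu=1$. In particular $\mathcal{B}_p$ is a line in $\A(p)$ for each $p$.

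The remaining, and I expect only genuinely delicate, point is to check that all these per-point lines coincide, so that the affine focal set $\mathcal{B}=\bigcup_{p}\mathcal{B}_p$ is one fixed line rather than a ruled surface. For this I would use the two fixed points furnished by Proposition~\ref{prop:UNF}: the visual-contour vertex $O$, for which $\xi=\phi-O$, and the point $Q=\phi+\mu^{-1}\eta$ lying in every $\A(p)$. Differentiating $\xi=\phi-O$ gives $D_X\xi=\phi_*X=X$; comparing with $D_X\xi=-\phi_*(S_1 X)=-\sigma X$ (valid since $\tau_1^2=0$ and $\xi$ is parallel) yields $\sigma=-1$. Writing a point of $\A(p)$ as $x=\phi+a\xi+b\eta$, the choice $(a,b)=(-1,0)$ gives $x=O$ and satisfies $a\sigma+b\mu=1$, while $(a,b)=(0,\mu^{-1})$ gives $x=Q$ and also satisfies $a\sigma+b\mu=1$. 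Thus both $O$ and $Q$ lie on $\mathcal{B}_p$; since $O\neq Q$ and $\mathcal{B}_p$ is one-dimensional, $\mathcal{B}_p$ is precisely the line through $O$ and $Q$, which does not depend on $p$.

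Taking the union over $p\in N$ then collapses $\mathcal{B}$ to the single line $OQ$, which proves the corollary. The algebra reducing the Hessian to a scalar multiple of the identity is routine once the umbilic–parallel normalization is in place; the only step requiring care is the last one, namely recognizing that the fixed points $O$ and $Q$ from Proposition~\ref{prop:UNF} both lie on the line $\mathcal{B}_p$ and hence force the lines to coincide for all $p$.
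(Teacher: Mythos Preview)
Your argument is correct and matches the paper's approach: both specialize Proposition~\ref{prop:DegreeN} via $\mu_k=\mu$ and $\sigma_{kl}=\sigma\,\delta_{kl}$ to obtain $q(a,b)=(1-a\sigma-b\mu)^n$, hence a single line in each $\A(p)$. Your additional step---identifying every $\mathcal{B}_p$ with the fixed line $OQ$ using Proposition~\ref{prop:UNF} and the constancy of $\sigma,\mu$---is a welcome elaboration that the paper's one-line proof leaves implicit.
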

\begin{proof}
For umbilic and normally flat immersions, $\mu_k=\mu$ and  $\sigma_{kj}=\sigma\delta_{kj}$.
\end{proof}

\begin{cor}
If the shape operators commute at $p$, the affine focal set at this point consists of $n$ lines. In particular this holds if $p$ is affine semiumbilic.
\end{cor}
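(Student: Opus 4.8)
The plan is to start from Proposition~\ref{prop:DegreeN} and repackage the second-order derivatives of $\Delta$ as a single matrix identity. In the $g$-orthonormal frame $\{X_1,\dots,X_n\}$ of eigenvectors of $S_2$, so that $S_2=\operatorname{diag}(\mu_1,\dots,\mu_n)$, the entries
\[
\frac{\partial^2\Delta}{\partial u_k^2}=1-b\mu_k-a\sigma_{kk},\qquad \frac{\partial^2\Delta}{\partial u_k\partial u_l}=-a\sigma_{kl}
\]
are exactly the entries of $I-aS_1-bS_2$, once we recall that $S_1$ is $g$-self adjoint, so $\sigma_{kl}=\sigma_{lk}$ and $(S_1)_{kl}=\sigma_{kl}$. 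Hence $q(a,b)=\det\left(I-aS_1-bS_2\right)$.

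First I would invoke the hypothesis that $S_1(p)$ and $S_2(p)$ commute. Since $q$ is a similarity invariant, I may compute it in any basis; choosing a common eigenbasis of the two commuting operators, both become diagonal, say $S_1=\operatorname{diag}(\sigma_1,\dots,\sigma_n)$ and $S_2=\operatorname{diag}(\mu_1,\dots,\mu_n)$, and the determinant factors completely:
\[
q(a,b)=\prod_{k=1}^n\left(1-a\sigma_k-b\mu_k\right).
\]
Each factor $1-a\sigma_k-b\mu_k=0$ defines a line $\ell_k$ in the $(a,b)$-plane (the degenerate case $(\sigma_k,\mu_k)=(0,0)$ gives a nonvanishing constant factor contributing no focal points). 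Under the affine parametrization $x=\phi(p)+a\xi(p)+b\eta(p)$ of the normal plane $\A(p)$ these correspond to genuine lines, so $\mathcal{B}_p$ is the union of the $n$ lines $\ell_1,\dots,\ell_n$, counted with multiplicity.

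For the final assertion I would appeal to the earlier proposition asserting that affine semiumbilicity at $p$ forces all shape operators $S_\zeta(p)$, $\zeta\in\A$, to commute; in particular $S_1(p)$ and $S_2(p)$ do, and the claim then reduces to the commuting case already treated.

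The step I expect to be the main obstacle is the simultaneous diagonalization. It is immediate when the affine metric $g$ is positive definite, since commuting $g$-self adjoint operators are then orthogonally simultaneously diagonalizable with real eigenvalues. If $g$ is indefinite this can fail and the eigenvalues may be complex; in that case I would replace diagonalization by simultaneous triangularization over $\C$ to still obtain the factorization of $\det\left(I-aS_1-bS_2\right)$ into $n$ linear factors, and then use that $S_1,S_2$ are real to pair complex-conjugate factors, observing that only the real factors cut out points of $\mathcal{B}_p$.
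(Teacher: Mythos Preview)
Your argument is correct and follows essentially the same route as the paper's proof: pick a common eigenbasis for the commuting operators $S_1(p),S_2(p)$, so that the Hessian becomes diagonal and $q(a,b)=\det(D^2\Delta)=\prod_{k=1}^n(1-a\sigma_k-b\mu_k)$, giving $n$ lines; the semiumbilic case then follows from the earlier proposition on commuting shape operators.

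The one place where you go beyond the paper is your discussion of the indefinite case. The paper simply asserts the existence of a $g$-orthonormal frame of common eigenvectors, which is unproblematic when $g$ is positive definite but, as you note, is not automatic when $g$ is indefinite (and the affine metric $g=h^2$ can indeed be indefinite in this setting). Your workaround via simultaneous triangularization over $\C$ to obtain the factorization of $q(a,b)$, together with complex-conjugate pairing of the factors, is a legitimate way to recover the statement in that generality, and in fact patches a gap the paper leaves open. Your repackaging of the Hessian as $I-aS_1-bS_2$ is what makes this basis-free argument available.
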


\begin{proof}
If the shape operators commute at $p$, then we can find an orthonormal frame $\{X_1,...,X_n\}$ made of $\zeta$-principal directions, for all $\zeta\in\A$. In this basis, $\sigma_{kl}=0$, for $k\neq l$, 
and we write $\sigma_{kk}=\sigma_k$. From  we conclude that the bifurcation set is given by
$$
\prod_{k=1}^n(1-b\mu_{k}-a\sigma_{k})=0,
$$
which is equivalent to $1-b\mu_{k}-a\sigma_{k}=0$, for some $1\leq k\leq n$.
\end{proof}

\subsection{Conditions for regularity of the affine focal set}

Assume that $\mu_1$ is a simple eigenvalue of $S_{2}$ associated with the principal direction $X_1$. We shall give conditions under which 
the affine focal set is smooth at $x=\phi+\mu_1^{-1}\eta$. 

\begin{lem}\label{lem:Regular1}
Consider $a=0$ and $b=\mu_1^{-1}$ in proposition \ref{prop:DegreeN}. At this point, the vector $\zeta=\mu_1\xi-\sigma_{11}\eta$ is tangent to the curve $q(a,b)=0$.
\end{lem}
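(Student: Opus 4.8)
The plan is to read the Hessian matrix $H(a,b)=D^2\Delta$ directly off Proposition \ref{prop:DegreeN}, whose entries are $H_{kk}=1-b\mu_k-a\sigma_{kk}$ and $H_{kl}=-a\sigma_{kl}$ for $k\neq l$, and to analyze the level curve $q(a,b)=\det H(a,b)=0$ near $(a,b)=(0,\mu_1^{-1})$. First I evaluate $H$ at this point: the choice $a=0$ kills every off-diagonal entry and $b=\mu_1^{-1}$ turns the diagonal into $H_{kk}=1-\mu_k/\mu_1$, so $H$ is diagonal with $H_{11}=0$ and $H_{kk}=1-\mu_k/\mu_1=:d_k$ for $k\geq 2$. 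Since $\mu_1$ is a \emph{simple} eigenvalue of $S_2$, each $d_k\neq 0$, so $H$ has a one-dimensional kernel (the $X_1$-direction); in particular $q(0,\mu_1^{-1})=0\cdot\prod_{k\geq2}d_k=0$, which confirms the point lies on the curve.

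The one computation that carries the argument is the gradient $(q_a,q_b)$ at this point. I would use the differential of the determinant, $dq=\operatorname{tr}\!\left(\operatorname{adj}(H)\,dH\right)$, together with the fact that for a diagonal $H$ the adjugate is again diagonal with $\operatorname{adj}(H)_{ii}=\prod_{k\neq i}H_{kk}$. At $(0,\mu_1^{-1})$ every such product with $i\geq 2$ contains the factor $H_{11}=0$ and therefore vanishes, leaving $\operatorname{adj}(H)=\operatorname{diag}(P,0,\ldots,0)$ with $P=\prod_{k\geq2}d_k\neq 0$. Consequently only the variation of the $(1,1)$-entry survives, $dq=P\,dH_{11}$, and from $H_{11}=1-b\mu_1-a\sigma_{11}$ I obtain $(q_a,q_b)=-P(\sigma_{11},\mu_1)$. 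Note the off-diagonal variations $dH_{kl}=-\sigma_{kl}\,da$ do not contribute precisely because $\operatorname{adj}(H)$ is diagonal here.

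Finally I translate back to the affine normal plane. A displacement $\dot a\,\xi+\dot b\,\eta$ is tangent to $q=0$ exactly when $q_a\dot a+q_b\dot b=0$, i.e. $\sigma_{11}\dot a+\mu_1\dot b=0$, whose solution line is spanned by $(\dot a,\dot b)=(\mu_1,-\sigma_{11})$; under $x=\phi+a\xi+b\eta$ this is exactly $\zeta=\mu_1\xi-\sigma_{11}\eta$, proving the lemma. Since $q_b=-P\mu_1\neq 0$ (as $\mu_1\neq 0$ and $P\neq 0$), the gradient is nonzero, so $(0,\mu_1^{-1})$ is a regular point of the curve and the tangent direction is genuinely well defined. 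The delicate step is this gradient evaluation at a singular matrix: the simplicity of $\mu_1$ is what forces the kernel to be one-dimensional, hence $\operatorname{adj}(H)$ to have rank one picking out a single entry of $dH$; without simplicity several $d_k$ would vanish, $\operatorname{adj}(H)$ would be zero, and $(0,\mu_1^{-1})$ would be a singular point of $q=0$ carrying no well-defined tangent line.
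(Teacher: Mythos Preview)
Your argument is correct and follows the same route as the paper: both compute the gradient $(q_a,q_b)$ at $(0,\mu_1^{-1})$ and find it proportional to $(\sigma_{11},\mu_1)$, so the tangent direction is $\mu_1\xi-\sigma_{11}\eta$. The paper simply states the result $(q_a,q_b)=-\prod_{j=2}^{n}(1-\mu_j\mu_1^{-1})(\sigma_{11},\mu_1)$ without intermediate steps, whereas you justify it cleanly via $dq=\operatorname{tr}(\operatorname{adj}(H)\,dH)$ and the observation that $\operatorname{adj}(H)$ has rank one at this point; your added remark that simplicity of $\mu_1$ is exactly what makes the gradient nonzero is a useful clarification.
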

\begin{proof}
Differentiating $q(a,b)=0$ and taking $a=0$, $b=\mu_1^{-1}$ we obtain 
$$
(q_a,q_b)=-\prod_{j=2}^{n}(1-\mu_j\mu_1^{-1})  \left( \sigma_{11}, \mu_1\right),
$$
and the lemma is proved.
\end{proof}

\begin{lem}\label{lem:Regular2}
Write
\begin{equation}
x(u)=\phi(u)+\mu_1^{-1}(u)\eta(u), \ \ t\in U.
\end{equation}
Assuming that $\eta$ is parallel, 
$$
x_{u_1}=-\frac{X_1(\mu_1)}{\mu_1^2}\eta,
$$
and for $j\neq 1$, 
$$
x_{u_j}=(1-\mu_1^{-1}\mu_j)X_j-\frac{X_j(\mu_1)}{\mu_1^2}\eta.
$$
\end{lem}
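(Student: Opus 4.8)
The plan is to differentiate the $\R^{n+2}$-valued map $x(u)=\phi(u)+\mu_1^{-1}(u)\eta(u)$ directly, applying the ambient connection $D$ along each frame direction $X_j$ (so that $x_{u_j}$ is read as $D_{X_j}x$, equivalently the derivative in adapted coordinates with $\partial_{u_j}=X_j$). By the Leibniz rule,
\begin{equation*}
D_{X_j}x=D_{X_j}\phi+X_j(\mu_1^{-1})\,\eta+\mu_1^{-1}\,D_{X_j}\eta,
\end{equation*}
where $D_{X_j}\phi=\phi_*X_j$ is just the tangent vector $X_j$. So the whole computation reduces to simplifying the term $D_{X_j}\eta$ and the scalar coefficient $X_j(\mu_1^{-1})$.

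The key step is to rewrite $D_{X_j}\eta$ using the hypothesis that $\eta$ is parallel. By definition, $D_X\eta$ is tangent to $N$ for every $X\in TN$, so both transversal components vanish, $\tau_2^1=\tau_2^2=0$. Equation \eqref{eq:basic2} then collapses to $D_X\eta=-\phi_*(S_2X)$, and since $\{X_1,\dots,X_n\}$ diagonalizes $S_2$ with $S_2X_j=\mu_jX_j$, we obtain $D_{X_j}\eta=-\mu_jX_j$. Substituting this together with $X_j(\mu_1^{-1})=-\mu_1^{-2}X_j(\mu_1)$ gives
\begin{equation*}
x_{u_j}=(1-\mu_1^{-1}\mu_j)X_j-\frac{X_j(\mu_1)}{\mu_1^2}\,\eta.
\end{equation*}
For $j=1$ the tangential coefficient $1-\mu_1^{-1}\mu_1$ vanishes, leaving only the $\eta$-term, which is exactly the asserted formula for $x_{u_1}$.

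This is essentially a one-line computation once the parallelism of $\eta$ is exploited, so there is no serious obstacle. The only points meriting care are the bookkeeping conventions: interpreting the partial derivatives $x_{u_j}$ against the (generally non-coordinate) $g$-orthonormal frame, and keeping track that parallelism supplies the remaining vanishing $\tau_2^1=0$ needed to drop the $\xi$-term from $D_{X_j}\eta$, the component $\tau_2^2$ already being zero from the normalization of $\A$. I would state these conventions explicitly at the outset so that the single application of \eqref{eq:basic2} is unambiguous.
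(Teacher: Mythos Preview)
Your proof is correct: differentiating $x=\phi+\mu_1^{-1}\eta$ along $X_j$, using $D_{X_j}\phi=X_j$, $D_{X_j}\eta=-\mu_jX_j$ (from parallelism of $\eta$ and $S_2X_j=\mu_jX_j$), and $X_j(\mu_1^{-1})=-\mu_1^{-2}X_j(\mu_1)$, gives the claimed formulas immediately. The paper states this lemma without proof, treating it as an evident computation; your argument is exactly the intended one.
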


Denote by $E_1$ the eigenspace generated by the eigenvector $X_1$ of $S_{2}$, and by $E_1^{\perp}\subset T_pN$ the $(n-1)$-space $g$-orthogonal to $E_1$.

\begin{prop}
Assume that $\eta$ is parallel. If $\mu_1$ is a simple eigenvalue of $S_{2}$ and $X_1(\mu_1)\neq 0$, the affine focal set is smooth at $x=\phi+\mu_1^{-1}\eta$ 
and the tangent space of $\mathcal{B}$ at this point is $E_1^{\perp}\oplus A(p)$.
\end{prop}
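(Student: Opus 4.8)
The plan is to combine the tangency computation from Lemma \ref{lem:Regular1} with the parametrization derivatives from Lemma \ref{lem:Regular2} to show that the two descriptions of the tangent space agree, and that smoothness holds under the stated hypotheses. First I would recall that by Proposition \ref{prop:DegreeN} the focal set near this point is the zero locus of $q(a,b)=0$ inside the affine normal plane $\A(p)$, but this only describes the slice $\mathcal{B}_p$ at a fixed $p$; the full focal set $\mathcal{B}\subset\R^{n+2}$ is swept out as $p$ varies over $N$. So the tangent space of $\mathcal{B}$ should split into a ``vertical'' part coming from moving within a fixed normal plane and a ``horizontal'' part coming from varying $p$. My expectation is that the vertical direction is exactly $\zeta=\mu_1\xi-\sigma_{11}\eta$ from Lemma \ref{lem:Regular1}, which lies in $\A(p)$, while the horizontal directions come from the images $x_{u_j}$ computed in Lemma \ref{lem:Regular2}.

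The key computation is to examine the map $x(u)=\phi(u)+\mu_1^{-1}(u)\eta(u)$ together with the fiber direction. From Lemma \ref{lem:Regular2}, for $j\neq 1$ we have $x_{u_j}=(1-\mu_1^{-1}\mu_j)X_j-\frac{X_j(\mu_1)}{\mu_1^2}\eta$, and since $\mu_1$ is a \emph{simple} eigenvalue, $1-\mu_1^{-1}\mu_j\neq 0$ for each $j\geq 2$. This means the $n-1$ vectors $x_{u_j}$, $j\geq 2$, project nontrivially and independently onto the directions $X_2,\dots,X_n$ spanning $E_1^{\perp}$, so modulo $\A(p)$ they span $E_1^{\perp}$. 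Meanwhile $x_{u_1}=-\frac{X_1(\mu_1)}{\mu_1^2}\eta$ lies inside $\A(p)$; the hypothesis $X_1(\mu_1)\neq 0$ guarantees this vector is nonzero and points along $\eta$. I would then argue that the tangent space of $\mathcal{B}$ at $x$ is generated by the $n$ vectors $x_{u_1},\dots,x_{u_n}$ together with the fiber-tangent vector $\zeta=\mu_1\xi-\sigma_{11}\eta$ from Lemma \ref{lem:Regular1}.

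To establish smoothness and identify the tangent space, I would verify that these generators span an $(n+1)$-dimensional subspace and no more. The directions $x_{u_2},\dots,x_{u_n}$ contribute the $(n-1)$-dimensional space $E_1^{\perp}$ (after absorbing their $\eta$-components into $\A(p)$). Within the plane $\A(p)$, the vectors $x_{u_1}$ (a nonzero multiple of $\eta$, using $X_1(\mu_1)\neq 0$) and $\zeta=\mu_1\xi-\sigma_{11}\eta$ are linearly independent precisely because $\zeta$ has a nonzero $\xi$-component $\mu_1\neq 0$ while $x_{u_1}$ has no $\xi$-component; hence together they span all of $\A(p)$. Therefore the tangent space is $E_1^{\perp}\oplus\A(p)$, which has dimension $(n-1)+2=n+1$, the expected codimension-one tangent space for a hypersurface, and the rank being constant and maximal gives smoothness of $\mathcal{B}$ at $x$.

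The main obstacle I anticipate is making rigorous the claim that $\mathcal{B}$ is genuinely a smooth hypersurface near $x$, rather than merely computing a candidate tangent space from the parametrization. One must check that the map sending $(u,\text{fiber parameter})$ into $\R^{n+2}$ is an immersion of rank $n+1$ at the relevant point and that its image coincides locally with the bifurcation set; the simple-eigenvalue condition $X_1(\mu_1)\neq 0$ is exactly what prevents the $\eta$-direction from degenerating and keeps the rank from dropping. I would also need to confirm that the fiber direction $\zeta$ truly lies in the tangent cone of $\mathcal{B}$ at $x$ in the full ambient space, not only within the slice $\mathcal{B}_p$; this follows from Lemma \ref{lem:Regular1} since $q$ defines the slice and $\zeta$ is tangent to $q(a,b)=0$, but it requires the transversality of the vertical and horizontal contributions, which the linear-independence argument above supplies.
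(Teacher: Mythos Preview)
Your proposal is correct and follows essentially the same approach as the paper: both arguments take the $n$ vectors $x_{u_1},\dots,x_{u_n}$ from Lemma~\ref{lem:Regular2} together with the fiber direction $\zeta$ from Lemma~\ref{lem:Regular1}, observe that all $n+1$ of them lie in $E_1^{\perp}\oplus\A(p)$, and then check linear independence using $X_1(\mu_1)\neq 0$ and $\mu_j\neq\mu_1$ for $j\neq 1$. Your write-up is more explicit about how the independence is verified (separating the $E_1^{\perp}$ projections from the $\A(p)$ components) and more cautious about the passage from ``rank $n+1$ parametrization'' to ``smooth hypersurface,'' points the paper leaves implicit, but the underlying strategy is the same.
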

\begin{proof}
By lemma \ref{lem:Regular1}, $\zeta$ is contained in the $(n+1)$-dimensional subspace $E_1^{\perp}\oplus A(p)$. By lemma \ref{lem:Regular2}, $x_{u_i}$, $i=1,...n$, are also
contained in this subspace. Thus we must only prove that these vectors are linearly independent. But this follows from the hypothesis $X_1(\mu_1)\neq 0$ and $\mu_j\neq\mu_1$, if $j\neq 1$. 
\end{proof}

\subsection{Relation with the Envelope of Tangent Spaces and simple singularities}

The Envelope of Tangent Spaces is the set
$$
ET_N=\{ x\in\R^{n+2}|\ \Delta=\Delta_{u_1}=...=\Delta_{u_n}=0, \ \mathrm{for\ some}\ u\in U\},
$$
or equivalently,
$$
ET_N=\{x=p+u\xi(p)|\ p\in N, u\in\R\}.
$$
If $u\neq \sigma^{-1}$, for some non-zero eigenvalue $\sigma$ of $S_1$, then $ET_N$ is regular (for details, see \cite{Craizer}). 

If $u=\sigma^{-1}$, for some non-zero eigenvalue $\sigma$ of $S_1$, then $x=p+u\xi(p)$ is a point of intersection of $ET_N$ with $\mathcal{B}$. In \cite{Craizer},
one can find examples of such points where $\Delta(x,p)$ are versal deformations of the simple singularities, namely, $A_k$, $k\geq 2$, $D_k$, $k\geq 4$, $D_6$, $D_7$ and $D_8$
(for examples in case $n=2$, see section \ref{sec:Hyperplanar} and \cite{Davis}, ch.8).
These examples are also examples of singularities that appear in $\mathcal{B}$. We conclude that any simple singularity appears as a singular point of the
affine focal set $\mathcal{B}$ for some immersion $N\subset M$ .

\subsection{An example}

\begin{exam}Let $\alpha$  and  $\beta$ be a plane curves parametrized by arc length affine and we  consider the surface $N$ parameterized by 
 \[
 \phi(u_1,u_2)=(\alpha(u_1),\beta(u_2)).
 \]
Denote $X_1=\phi_{u_1},\ X_2=\phi_{u_2}$ and let $\xi=(\alpha''(u_1), \beta''(u_2))$. We shall assume that $\xi$ is tangent to a hypersurface $M\subset\R^4$. Since
$\xi'$ is tangent to $N$, $\xi$ is the parallel Darboux vector fields of the immersion $N\subset M$. 
Let $\xi_1=(\alpha''(u_1),0)$, $\xi_2=(0,\beta''(u_2))$. Then $\xi_1$ and $\xi_2$ are parallel and belongs to the affine normal plane bundle. Moreover
$$
\left[ X_1,X_2,\xi_1,\xi_2 \right]=1.
$$

The affine distance function  $F:\R^2 \times \R^4 \rightarrow \R$ on product of curves is given by:
\[F(u,x)=[X_1,X_2,\xi,x-\phi(u_1,u_2)]\]

The bifurcation set is given by

\[\mathcal{B}_F=\{x \in \R^4: F_{u_1}=F_{u_2}=F_{u_1u_1} F_{u_2u_2}-F_{u_1u_2}^2=0\}\]

\begin{align*}
 F_{u}&=-[x-\phi(u_1,u_2),X_2, \xi_1,\xi_2]\\
 F_{v}&=-[X_1, x-\phi(u_1,u_2), \xi_1,\xi_2]
\end{align*}

We have $F_{u_1}=F_{u_2}=0$ if, and only if, there are $r,s \in \R$ such that $x-\phi(u_1,u_2)=r \xi+ s \eta$. Now for $x\in \R^4$ such that $F_{u_1}=F_{u_2}=0$

\begin{align*}
 F_{u_1u_1}&=1-r k(\alpha)\\
 F_{u_1u_2}&=0\\
F_{u_2u_2}&=-1+sk(\beta)
\end{align*}
where $k$ denote the affine curvature of the plane curves $\alpha$ and $\beta$. Therefore $x \in \mathcal{B}_F$, if and only if,  $x-\phi(u_1,u_2)=r \xi_1+ s \xi_2$ and $r=k(\alpha)^{-1}$
or $s=k(\beta)^{-1}$. Thus, at each point, the affine focal set is a pair of lines concurring at $(r,s)=\left(k(\alpha)^{-1},k(\beta)^{-1}\right)$.

Globally, we can write the affine focal set as $E(\alpha)\times\R^2\cup \R^2\times E(\beta)$, where $E$ denotes the affine planar evolute of the planar curve. 
\end{exam}

\section{Curves contained in surfaces of $\R^3$}

In this section, we shall consider a curve $\phi:U\to M\subset\R^3$, where $U\subset\R$ is an interval and $M$ is a surface of $\R^3$. 

\subsection{Affine normal plane bundle}

Let $\phi:U\to M\subset\R^3$ be a curve. The non-degeneracy hypothesis says that, at each point, the osculating plane of $\phi$ does not
coincide with the tangent plane of $M$. Under this hypothesis, there exists a reparameterization of $\phi$ such that $\phi'''(u)$ is tangent to $M$.
From now on, we shall assume that this property holds.

Choose a vector field $\xi(u)$ in the Darboux direction satisfying 
\begin{equation}\label{eq:Det2}
[T,\phi'',\xi]=1,
\end{equation}
where $T=\phi'(u)$. 
\begin{prop}
The vector field $\xi$ is parallel, $g=du$ is the affine metric and the affine normal plane bundle
is spanned by $\{\xi,\phi''\}$. 
\end{prop}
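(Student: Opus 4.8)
The plan is to deduce all three assertions from two elementary geometric facts about the chosen parametrization. First, the non-degeneracy hypothesis says exactly that the osculating plane $\mathrm{span}\{\phi',\phi''\}$ never equals $T_pM$; as both planes contain the tangent line $\mathrm{span}\{\phi'\}$, this is equivalent to $\phi''\notin T_pM$, i.e. $\phi''$ is transversal to $M$. Second, the reparametrization hypothesis gives that $\phi'''$ is tangent to $M$. Consequently $\{\phi',\phi'',\xi\}$ is a frame of $\R^3$ (indeed $[\phi',\phi'',\xi]=1\neq0$ by \eqref{eq:Det2}), the vector $\phi''$ is the only one of the three pointing out of $M$, and the three vectors $\phi'$, $\phi'''$, $\xi$ all lie in the $2$-plane $T_pM$.

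First I would record the affine metric. Since $D_TT=\phi''$ with $T=\phi'$, the bilinear form $G_{\mathfrak{u}}$ of Subsection 2.3 reduces, for the frame $\mathfrak{u}=\{\partial_u\}$, to the single entry $G_{\mathfrak{u}}(\partial_u,\partial_u)=[\phi',\phi'',\xi]=1$ by \eqref{eq:Det2}. Hence $|\det_{\mathfrak{u}}G_{\mathfrak{u}}|^{1/(n+2)}=1$ and $g(\partial_u,\partial_u)=1$, that is, $g=du$.

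Next I would identify the affine normal plane. By the construction of the affine normal plane bundle, $\A$ is spanned by $\xi$ together with a field $\eta$ that is transversal to $M$, satisfies $\tau_2^2=0$ (equivalently $D_T\eta$ tangent to $M$), and is normalized by $[\phi',\eta,\xi]=1$. The candidate $\eta=\phi''$ meets all three requirements: it is transversal to $M$; its derivative $D_T\phi''=\phi'''$ is tangent to $M$, so $\tau_2^2=0$; and $[\phi',\phi'',\xi]=1$ by \eqref{eq:Det2}. Since $\A$ is unique up to the admissible replacement $\eta\mapsto\eta+\lambda\xi$, this yields $\A=\mathrm{span}\{\xi,\phi''\}$.

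The only step with real content is that $\xi$ is parallel, i.e. that $D_T\xi$ is tangent to $N=\mathrm{span}\{\phi'\}$. Being a Darboux field, $\xi$ has $D_T\xi$ tangent to $M$, so in the frame $\{\phi',\phi'',\xi\}$ it carries no $\phi''$-component: $D_T\xi=p\,\phi'+n\,\xi$. To kill $n$ I would differentiate the normalization $[\phi',\phi'',\xi]=1$; using $[\phi'',\phi'',\xi]=0$ this gives $[\phi',\phi''',\xi]+[\phi',\phi'',D_T\xi]=0$. The first bracket vanishes because $\phi'$, $\phi'''$ and $\xi$ are linearly dependent, all lying in the $2$-plane $T_pM$; hence $[\phi',\phi'',D_T\xi]=0$. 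Substituting $D_T\xi=p\,\phi'+n\,\xi$ and using $[\phi',\phi'',\phi']=0$ and $[\phi',\phi'',\xi]=1$ forces $n=0$, so $D_T\xi=p\,\phi'$ is tangent to $N$ and $\xi$ is parallel. The main point to appreciate is that the normalization \eqref{eq:Det2} is precisely what makes $\xi$ parallel, the mechanism being the identity $[\phi',\phi''',\xi]=0$ produced by the parametrization. One should also keep in mind the mild nondegeneracy that $\phi'''$ has nonzero $\xi$-component (equivalently $\phi',\phi'',\phi'''$ are independent), which is exactly what renders the Darboux direction transversal to $N$.
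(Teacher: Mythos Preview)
Your proof is correct and follows essentially the same route as the paper's: differentiate the normalization $[\phi',\phi'',\xi]=1$ and use that $\phi',\phi''',\xi$ all lie in $T_pM$ to force $\xi'\parallel\phi'$; then read off $g=du$ and identify $\phi''$ as an admissible $\eta$ from the same determinant and from $D_T\phi''=\phi'''$ being tangent to $M$. The paper's proof is simply a terser version of the same argument, so there is nothing to add beyond noting that your explicit unpacking of the three conditions defining $\eta$ (transversal to $M$, $\tau_2^2=0$, normalized volume) is a helpful clarification of what the paper leaves implicit.
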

\begin{proof}
Differentiating equation \eqref{eq:Det2} we obtain that $\xi'(u)\parallel T$, and thus $\xi$ is parallel. Moreover, equation \eqref{eq:Det2} 
implies that $g(T)=1$, i.e., $g=du$. Finally equation \eqref{eq:Det2} together with $\gamma'''$ tangent to $M$ implies that $\gamma''(u)$ belongs to the affine normal plane.
\end{proof}

We can write 
\begin{equation}\label{eq:StructCurves}
\xi'=-\sigma T;\ \ \ \phi'''=-\rho T+\tau \xi.
\end{equation}

\subsection{Normally flat immersion}

Take a real function $\lambda$ such that $\lambda'=-\tau$. Note that $\lambda$ may not be globally defined. 
Consider the vector field $\eta=\phi''+\lambda\xi$ (see Figure \ref{fig:ParallelVF}). Then $[T,\eta,\xi]=1$ and 
$$
\eta'=-\rho T+\tau\xi-\tau\xi-\lambda\sigma T=-(\rho+\lambda\sigma) T.
$$
Defining $\mu=\rho+\lambda\sigma$, we obtain the following equations:
$$
\left\{
\begin{array}{c}
T'=\eta-\lambda\xi\\
\eta'=-\mu T\\
\xi'=-\sigma T
\end{array}
\right.
$$
Note that $\xi$ and $\eta$ are both parallel and thus the immersion is normally flat with respect to the affine normal plane bundle $A$.  

\begin{figure}[htb]
 \centering
\includegraphics[width=0.50\linewidth]{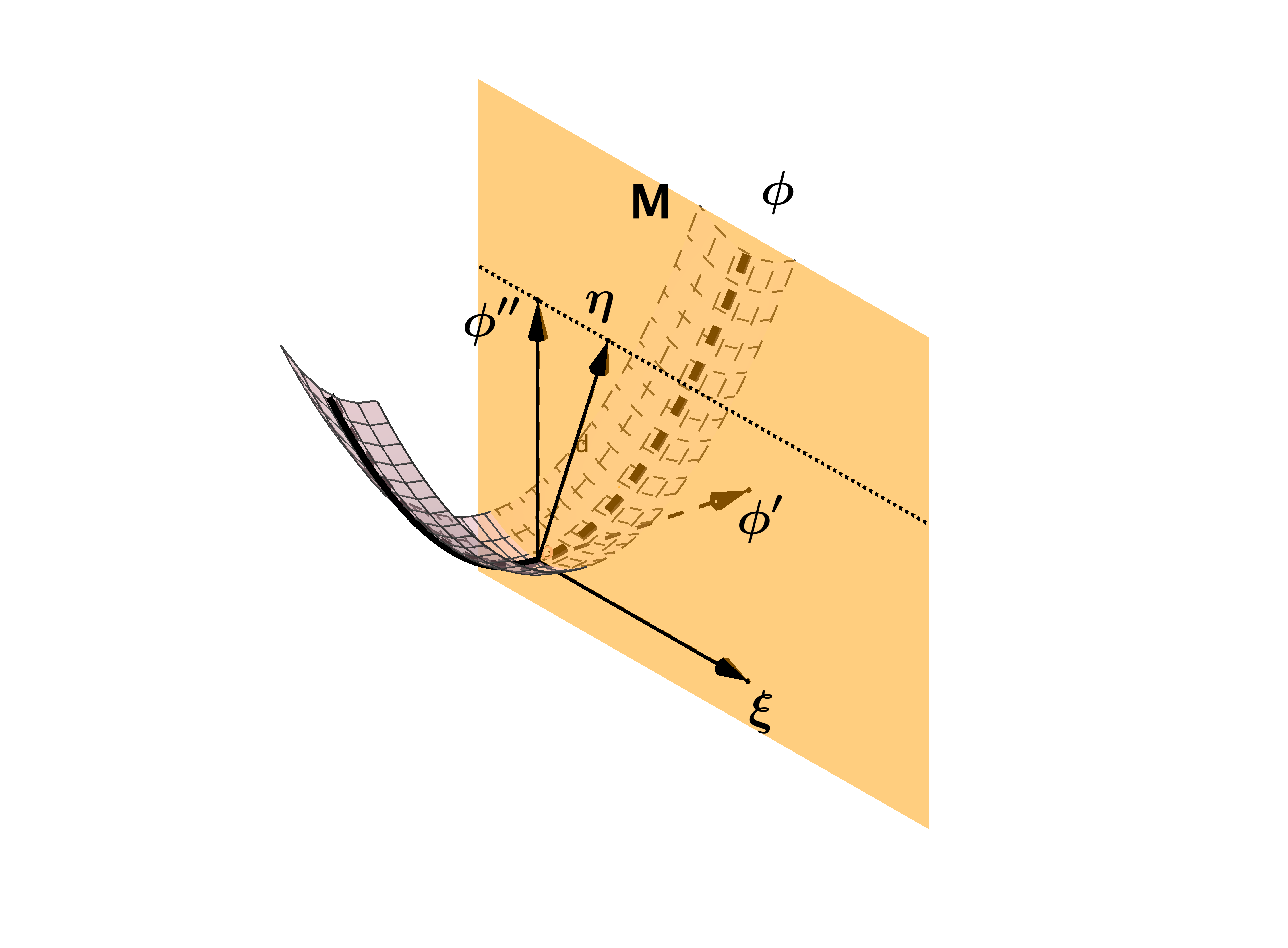}
\caption{ The parallel vector fields $\xi$ and $\eta$. }
\label{fig:ParallelVF}
\end{figure}

\begin{lem}
A curve $\phi:U\to M$ is a visual contour if and only if $\sigma$ is constant.
\end{lem}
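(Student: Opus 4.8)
The plan is to characterize the visual contour condition for curves directly through the structural quantity $\sigma$ that governs how the Darboux field $\xi$ varies. Recall from Corollary \ref{cor:VisualContour} and the discussion of visual contours that $\phi:U\to M$ is a visual contour precisely when there is a fixed point $O\in\R^3$ lying in every tangent plane $T_{\phi(u)}M$ along the curve, equivalently when the Darboux field can be taken in the form $\xi=\phi-O$ for a constant $O$. Since $\xi$ points in the Darboux direction tangent to $M$ and transversal to the curve, the tangent plane of $M$ along $N$ is spanned by $T=\phi'$ and $\xi$; so the geometric content is that the family of lines $\{\phi(u)+s\,\xi(u)\}$ all pass through a single fixed point $O$.

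First I would use the normalization \eqref{eq:StructCurves}, namely $\xi'=-\sigma T$, which already encodes that $\xi$ is parallel. Suppose $\sigma$ is a nonzero constant. I would exhibit the candidate fixed point explicitly by setting $O=\phi+\sigma^{-1}\xi$ and differentiating: using $T=\phi'$ and $\xi'=-\sigma T$, one gets $O'=\phi'+\sigma^{-1}\xi'=T-\sigma^{-1}\sigma T=0$, so $O$ is constant. Since $O-\phi=\sigma^{-1}\xi$ lies along the Darboux direction, $O$ belongs to the tangent plane of $M$ at every point, hence $\phi$ is a visual contour. This is essentially the curve version of the computation in Corollary \ref{cor:VisualContour}, specialized to $n=1$.

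Conversely, suppose $\phi$ is a visual contour with fixed point $O$. Then $O-\phi(u)$ lies in the tangent plane $\mathrm{span}\{T(u),\xi(u)\}$ for every $u$, so I would write $O-\phi=c(u)T+d(u)\xi$ for scalar functions $c,d$. Differentiating and using $T'=\eta-\lambda\xi$ (from the structural equations preceding the lemma) together with $\xi'=-\sigma T$, the condition $O'=0$ becomes a system in the coefficients of the independent vectors $T$, $\xi$, and $\eta$. Collecting the $\eta$-component forces $c\equiv 0$, after which the $T$-component yields $-1-d\sigma=0$ and the $\xi$-component yields $d'=0$; hence $d$ is a nonzero constant and $\sigma=-d^{-1}$ is constant.

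The main obstacle is bookkeeping rather than conceptual: I must be careful that $\{T,\xi,\eta\}$ is a genuine frame of $\R^3$ (which holds by $[T,\eta,\xi]=1$) so that comparing components is legitimate, and I must handle the degenerate possibility $\sigma=0$. If $\sigma\equiv 0$ then $\xi$ is a constant vector, so the tangent planes of $M$ along $N$ are all parallel (sharing the direction $\xi$); whether this counts as a visual contour depends on allowing $O$ at infinity, and I expect the intended statement treats $\sigma$ as a nonzero constant, matching the visual-contour case $O\in\R^3$. I would state this caveat explicitly and otherwise the argument is a short direct frame computation.
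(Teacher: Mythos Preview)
Your forward direction is identical to the paper's. For the converse, the paper takes a shorter route: it simply observes that $\phi-O$ is itself a parallel Darboux vector field (it is tangent to $M$ because $O$ lies in every tangent plane, and $D_T(\phi-O)=T$ is tangent to $N$), and then invokes the uniqueness of the parallel Darboux field up to multiplicative constant to conclude $\xi=a(\phi-O)$ with $a$ constant, whence $\xi'=aT$ and $\sigma=-a$ is constant. Your direct frame computation is correct and more self-contained, but what it is really doing is reproving that uniqueness in the special case $n=1$: the vanishing of the $\eta$-component (forcing $c\equiv 0$) is precisely the uniqueness of the Darboux direction under the non-degeneracy of $h^2$, and $d'=0$ is the parallelism constraint. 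One trivial slip: the $T$-component actually gives $-1=-d\sigma$, hence $\sigma=d^{-1}$ rather than $-d^{-1}$, which does not affect the conclusion. Your caveat about $\sigma\equiv 0$ is appropriate; the paper tacitly assumes $\sigma\neq 0$ when writing $O=\phi+\sigma^{-1}\xi$.
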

\begin{proof}
If $\sigma$ is constant, then $O=\phi+\sigma^{-1}\xi$ is constant and belong to the tangent plane to $M$ at $p$, thus implying that
$\phi$ is a visual contour. Conversely, if $O$ belongs to the tangent plane to $M$ at each $p$, then $\phi-O$ is a parallel Darboux vector 
field. Thus $\xi=a(\phi-O)$, for some constant $a$, and hence $\sigma$ is constant.
\end{proof}

\begin{prop}\label{prop:SigmaMuConstant}
A curve $\phi:U\to M$ is a visual contour and there exists a constant point $Q\in\A(p)$, for each $p\in N$,  if and only if $\sigma$ and $\mu$ are constant.
\end{prop}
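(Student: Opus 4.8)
The plan is to express an arbitrary candidate fixed point $Q$ lying in every affine normal plane $\A(p)$ in terms of the moving frame $\{T,\xi,\eta\}$, differentiate once, and read off the conditions directly from the structure equations of the preceding subsection. First I would write $Q=\phi+a\xi+b\eta$ for smooth functions $a,b$ on $U$, which is possible because $\{\xi,\eta\}$ spans $\A(p)$. Differentiating and substituting $\phi'=T$, $\xi'=-\sigma T$, $\eta'=-\mu T$ yields
\[
Q'=(1-a\sigma-b\mu)\,T+a'\,\xi+b'\,\eta .
\]
Since the non-degeneracy hypothesis makes $\{T,\xi,\eta\}$ a basis of $\R^3$ at each point, $Q$ is a constant point contained in all the planes $\A(p)$ if and only if $a$ and $b$ are constant and the identity $a\sigma+b\mu=1$ holds. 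This is the computational heart of the argument; everything else is reading off consequences.

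For the forward implication I would assume $\phi$ is a visual contour and that such a constant $Q\neq O$ exists. The preceding lemma gives that $\sigma$ is constant, and the display above furnishes constants $a,b$ with $a\sigma+b\mu\equiv 1$. I would then argue $b\neq 0$: if $b=0$ then $a\sigma=1$ forces $Q=\phi+\sigma^{-1}\xi=O$, contradicting $Q\neq O$. With $b\neq 0$ and $\sigma$ constant, the relation $b\mu=1-a\sigma$ shows $\mu$ is constant. For the converse, assume $\sigma$ and $\mu$ are constant; the lemma again yields that $\phi$ is a visual contour. To exhibit the fixed point, when $\mu\neq 0$ I would set $Q=\phi+\mu^{-1}\eta$, so that $Q'=-(\mu'/\mu^2)\eta=0$ and $Q$ is a constant point of $\A(p)$ distinct from $O$; the remaining case $\mu\equiv 0$, $\sigma\neq 0$ is covered by $Q=\phi+\sigma^{-1}\xi+\eta$, while $\sigma=\mu=0$ is degenerate with no finite fixed point off the $\xi$-line.

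The main subtlety, and the point I expect to be easy to overlook, is the role of $O$ itself: every visual contour already carries the constant point $O=\phi+\sigma^{-1}\xi$ inside $\A(p)$, so the mere existence of \emph{some} fixed point in the normal planes is automatic and says nothing about $\mu$. The statement is therefore only meaningful for a fixed point $Q\neq O$, i.e. one that uses the $\eta$-direction nontrivially (the condition $b\neq 0$ above), exactly as in Proposition~\ref{prop:UNF}; this is precisely what pins $\mu$ down. Once this is isolated, the proof reduces to the linear-algebra reading of $Q'$ in the frame $\{T,\xi,\eta\}$ together with the earlier visual-contour lemma, and no further estimates are required.
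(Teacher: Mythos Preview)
Your argument is correct and follows essentially the same route as the paper's proof: both express the candidate fixed point as $Q=\phi+a\xi+b\eta$ (equivalently, view $\phi-Q$ as a section of $\A$), differentiate using the structure equations $\xi'=-\sigma T$, $\eta'=-\mu T$, and read off that $a,b$ must be constants satisfying $a\sigma+b\mu=1$. The paper phrases the forward implication as ``$\phi-Q$ is parallel in $\A$, hence $\eta=\phi-Q+a\xi$ with $a$ constant, so $\mu=-1+a\sigma$'', which is exactly your computation in slightly more abstract language.

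Your write-up is in fact more careful than the paper on two points the paper glosses over: you make explicit that the statement only has content for $Q\neq O$ (otherwise $b=0$ and nothing is learned about $\mu$), and you treat the degenerate case $\mu=0$ in the converse, where the paper's formula $Q=\phi+\mu^{-1}\eta$ is unavailable. These are genuine improvements rather than deviations.
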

\begin{proof}
If $\mu$ is constant, then $Q=\phi+\mu^{-1}\eta$ is constant and belongs to $\A(p)$, for each $p$. Conversely, if there exists a point $Q\in\A(p)$, for each $p$, 
then $\phi-Q$ is parallel and belongs to $\A(p)$. Thus $\eta=\phi-Q+a\xi$, for some constant $a$, which implies that $\mu=-1+a\sigma$ is constant.
\end{proof}

Comparing the above proposition with proposition \ref{prop:UNF} we see that the umbilic and normally flat condition for $n\geq 2$ corresponds 
to $\sigma$ and $\mu$ constant for curves. 

\subsection{Envelope of affine normal planes}

The affine distance is given by 
\begin{equation}
\Delta(x,u)=\left[ x-\phi(u), \phi'(u), \xi(u) \right].
\end{equation}
Thus 
\begin{equation*}
\Delta_u(x,u)=\left[ x-\phi(u), \phi''(u), \xi(u) \right],
\end{equation*}
and so $\Delta_u=0$ is the equation of the affine normal plane. 
We conclude that the affine focal set 
\begin{equation*}
\mathcal{B}=\{x\in\R^3|\ \Delta_u=\Delta_{uu}=0, \ \mathrm{for\ some} \ u\in U\},
\end{equation*}
coincides with the envelope of $\A(u)$, $u\in U$. 

Denote $Q(u)=\phi(u)+\mu^{-1}(u)\eta(u)$ and  $O(u)=\phi(u)+\sigma^{-1}(u)\xi(u)$ and let $l(u)$ be the line connecting the points $O(u)$ and $Q(u)$ (see Figure \ref{fig:Linel}).

\begin{prop}
The envelope $\B$ of the family of planes $\{\A(u),u\in U\}$ is the ruled surface formed by the lines $l(u)$, $u\in U$.
\end{prop}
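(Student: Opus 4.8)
The plan is to work directly from the characterization $\B=\{x\mid \Delta_u=\Delta_{uu}=0 \text{ for some } u\in U\}$. We have already shown that $\Delta_u(\cdot,u)=0$ is precisely the equation of the plane $\A(u)$, so every point of $\A(u)$ satisfies $\Delta_u=0$ automatically. Consequently the envelope is obtained inside each plane $\A(u)$ by imposing the single extra equation $\Delta_{uu}=0$, which should cut out one \emph{characteristic line}. The strategy is therefore: compute $\Delta_{uu}$ in the moving-frame coordinates, identify the resulting locus as an affine line, and then verify that this line passes through both $O(u)$ and $Q(u)$, so that it coincides with $l(u)$.

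First I would write a point of $\A(u)$ as $x=\phi+a\xi+b\eta$ and differentiate $\Delta_u=[x-\phi,\eta,\xi]$ once more, using the structure equations $T'=\eta-\lambda\xi$, $\eta'=-\mu T$, $\xi'=-\sigma T$ together with the normalization $[T,\eta,\xi]=1$ and the antisymmetry of the volume form. The term $\tfrac{d}{du}(x-\phi)=-T$ contributes $-[T,\eta,\xi]=-1$, while the $\eta'$ and $\xi'$ terms, after expanding $x-\phi$ in the basis $\{\xi,\eta\}$, reduce to multiples of $[\eta,T,\xi]=-1$ and $[\xi,\eta,T]=-1$. Collecting the terms gives $\Delta_{uu}=\sigma a+\mu b-1$. (This is, up to an overall sign, exactly what Proposition \ref{prop:DegreeN} yields for $n=1$, with $\sigma_{11}=\sigma$ and $\mu_1=\mu$.) Hence the characteristic locus of the family inside $\A(u)$ is the affine line $\{x=\phi+a\xi+b\eta\mid \sigma a+\mu b=1\}$.

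It then remains to recognize this line as $l(u)$. The point $O(u)=\phi+\sigma^{-1}\xi$ has frame coordinates $(a,b)=(\sigma^{-1},0)$ and $Q(u)=\phi+\mu^{-1}\eta$ has coordinates $(a,b)=(0,\mu^{-1})$, and both visibly satisfy $\sigma a+\mu b=1$. Since $\xi$ and $\eta$ are linearly independent, $O(u)\neq Q(u)$, so these two points determine a unique line, which must be the characteristic line just found. Taking the union over $u\in U$ identifies $\B$ with the ruled surface swept out by the lines $l(u)$, proving the proposition.

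The only delicate point is the sign bookkeeping in the bracket computation of $\Delta_{uu}$, which must be handled carefully via the antisymmetry of $[\cdot,\cdot,\cdot]$; beyond that the argument is routine. I would also remark that it implicitly presupposes $\sigma\neq 0$ and $\mu\neq 0$, so that $O(u)$ and $Q(u)$ are finite points; if one of them vanishes the line $\sigma a+\mu b=1$ still makes sense, and the corresponding ruling becomes parallel to $\xi$ or to $\eta$.
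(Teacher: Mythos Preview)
Your argument is correct and is essentially the same as the paper's: the paper introduces $F(x,u)=[x-\phi,\eta,\xi]$, which is exactly your $\Delta_u$ (since $\phi''=\eta-\lambda\xi$), differentiates once to obtain $F_u=-1-\mu[x-\phi,T,\xi]-\sigma[x-\phi,\eta,T]$, and substitutes $x-\phi=a\xi+b\eta$ to arrive at the same line $a\sigma+b\mu=1$. Your explicit verification that $O(u)$ and $Q(u)$ lie on this line, and your remark about the implicit hypothesis $\sigma,\mu\neq 0$, are welcome clarifications that the paper leaves to the reader.
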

\begin{proof}
The equation of the plane $\A(u)$ is $F(x,u)=0$, where
$$
F(x,u)=[x-\phi(u),\eta(u),\xi(u)].
$$
Differentiating this equation with respect to $u$ we obtain
\begin{equation}\label{eq:Ft}
F_u=-\mu [x-\phi, T,\xi]-\sigma [X-\phi,\eta,T]-1.
\end{equation}
Writing $x-\phi=a\xi+b\eta$ and substituting in this equation we get $F_u=0$ if and only if
\begin{equation*}\label{eq:CurvesB}
a\sigma+b\mu=1,
\end{equation*}
thus proving the proposition.
\end{proof}

\begin{figure}[htb]
 \centering
\includegraphics[width=0.50\linewidth]{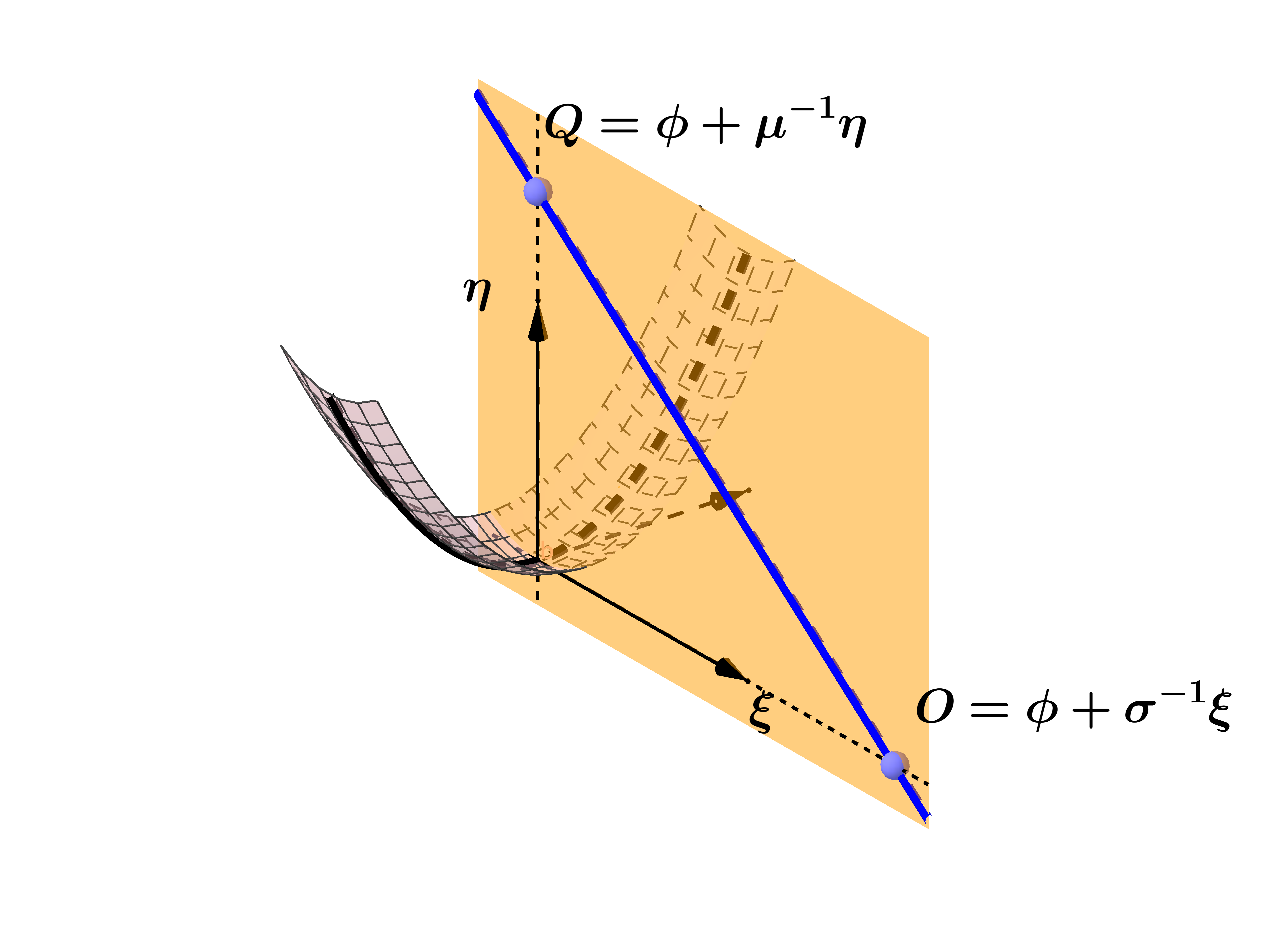}
 \caption{ The line $l(u)$ whose union is the affine focal set $\mathcal{B}$ . }
\label{fig:Linel}
\end{figure}

\begin{lem}
The ruled surface $\B$ is developable. 
\end{lem}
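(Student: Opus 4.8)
The plan is to recall that a ruled surface is developable precisely when the tangent plane is constant along each ruling $l(u)$. The surface $\mathcal{B}$ is parametrized by $\Psi(u,t) = O(u) + t\,(Q(u) - O(u))$, or more conveniently by writing a general point of $l(u)$ as $x = \phi(u) + a\,\xi(u) + b\,\eta(u)$ subject to the constraint $a\sigma(u) + b\mu(u) = 1$ obtained in the previous proposition. The developability criterion I would use is the standard one: the surface is developable if and only if $[\,r',\, v,\, v'\,] = 0$ along each ruling, where $r(u)$ is a base curve and $v(u)$ is the direction of the ruling. Here the ruling direction is $v(u) = Q(u) - O(u) = \sigma^{-1}\xi - \mu^{-1}\eta$ up to a nonzero scalar.

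First I would compute the derivatives of the two distinguished points using the structure equations \eqref{eq:StructCurves} together with $\eta' = -\mu T$ and $\xi' = -\sigma T$. The key observation is that $O(u) = \phi + \sigma^{-1}\xi$ and $Q(u) = \phi + \mu^{-1}\eta$, and differentiating gives
\begin{align*}
O' &= T + (\sigma^{-1})'\xi + \sigma^{-1}\xi' = \left(1 - \sigma^{-1}\sigma\right)T + (\sigma^{-1})'\xi = (\sigma^{-1})'\xi,\\
Q' &= T + (\mu^{-1})'\eta + \mu^{-1}\eta' = \left(1 - \mu^{-1}\mu\right)T + (\mu^{-1})'\eta = (\mu^{-1})'\eta.
\end{align*}
Thus $O'$ is a multiple of $\xi$ and $Q'$ is a multiple of $\eta$, a striking simplification that comes directly from the parallelism of $\xi$ and $\eta$.

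Next I would take $r = O$ as the base curve and $v = Q - O$ as the ruling direction and verify the determinant condition. The ruling direction is $v = \mu^{-1}\eta - \sigma^{-1}\xi$, and its derivative is $v' = Q' - O' = (\mu^{-1})'\eta - (\sigma^{-1})'\xi$, again a combination of $\xi$ and $\eta$ only. Since $r' = O' = (\sigma^{-1})'\xi$, all three vectors $r'$, $v$, and $v'$ lie in the affine normal plane spanned by $\{\xi,\eta\}$, so they are linearly dependent and the triple product $[\,O',\, v,\, v'\,]$ vanishes identically. This yields developability.

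The only subtlety, and the step I expect to be the main obstacle, is the proper handling of degenerate loci: the points $O$ and $Q$ are defined only where $\sigma \neq 0$ and $\mu \neq 0$ respectively, and the ruling $l(u)$ was defined as the line through $O(u)$ and $Q(u)$, which requires $O(u) \neq Q(u)$. I would treat the generic region where $\sigma,\mu \neq 0$ and $O \neq Q$ by the determinant argument above, and then observe that developability is a closed condition, so it extends by continuity across the exceptional parameter values where one of $\sigma,\mu$ vanishes or the two points coincide. The essential content, however, is the computation showing that $O'$, $Q'$, and hence $v'$ all remain in the plane $\A(u)$, which is precisely the geometric reason the rulings sweep out a developable surface.
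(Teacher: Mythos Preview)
Your proof is correct and follows essentially the same approach as the paper: the paper computes $Q'=-\dfrac{\mu'}{\mu^2}\eta$ and $O'=-\dfrac{\sigma'}{\sigma^2}\xi$ (exactly your $(\mu^{-1})'\eta$ and $(\sigma^{-1})'\xi$) and then concludes $[Q',O',O-Q]=0$ from the fact that all three vectors lie in the plane spanned by $\xi,\eta$. Your version is more explicit about the developability criterion and adds a remark on degenerate loci that the paper omits, but the core argument is identical.
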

\begin{proof}
Since
$$
Q'=-\frac{\mu'}{\mu^2}\eta,\ \ O'=-\frac{\sigma'}{\sigma^2}\xi,
$$
we conclude that
$$
[Q',O',O-Q]=0,
$$
thus proving that $\B$ is developable. 
\end{proof}

\subsection{Singularities of $\B$}

\begin{thm}
Let ${\mathcal B}$ be the affine focal set of $N\subset M$. Each point of $\mathcal{B}$ at $p\in N$ belongs to the line
$$
a\sigma+b\mu=1.
$$
Then
\begin{enumerate}
\item ${\mathcal B}$ is smooth if $a\sigma'+b\mu'\neq 0$.
\item ${\mathcal B}$ is locally diffeomorphic to a cuspidal edge if $a\sigma'+b\mu'=0$ and $a\sigma''+b\mu''\neq 0$.
\item ${\mathcal B}$ is locally diffeomorphic to a swallowtail if $a\sigma''+b\mu''=0$ and $a\sigma'''+b\mu'''\neq 0$.
\end{enumerate}
\end{thm}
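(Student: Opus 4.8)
The plan is to parametrize the developable surface $\mathcal{B}$ by the ruling construction and reduce the classification to the standard singularity theory of one-parameter families of lines (developable surfaces). Since $\mathcal{B}$ is the union of the lines $l(u)$ connecting $O(u)=\phi+\sigma^{-1}\xi$ and $Q(u)=\phi+\mu^{-1}\eta$, I would first write a parametrization $\Psi(u,t)$ of $\mathcal{B}$ whose $u$-curve moves along one directrix and whose $t$-direction sweeps the ruling, e.g. $\Psi(u,t)=\phi(u)+a(u,t)\xi(u)+b(u,t)\eta(u)$ with the constraint $a\sigma+b\mu=1$ from the previous proposition. The point at parameter $(a,b)$ on the line with $a\sigma+b\mu=1$ is the focal point, and the singularities of $\mathcal{B}$ are exactly where $\Psi_u$ and $\Psi_t$ fail to be independent, i.e. where the ruling is tangent to the curve of regression (the striction/edge of regression).

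First I would compute $\Psi_u$ using the structure equations $T'=\eta-\lambda\xi$, $\eta'=-\mu T$, $\xi'=-\sigma T$ together with $\phi'=T$. Differentiating $x=\phi+a\xi+b\eta$ along the ruling (holding the line fixed, so $(a,b)$ varies subject to the linear constraint) and along $u$, the singular locus should be detected by the vanishing of the coefficient of $T$ in $\Psi_u$, since the $\xi,\eta$ components stay in the plane $\A(u)$. The key quantity that emerges is $a\sigma'+b\mu'$: differentiating the constraint $a\sigma+b\mu=1$ and combining with the structure equations, the tangent vector to $\mathcal{B}$ degenerates precisely when $a\sigma'+b\mu'=0$. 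This gives part (1): $\mathcal{B}$ is smooth (immersed) exactly when $a\sigma'+b\mu'\neq 0$.

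For parts (2) and (3) I would invoke the standard criteria for cuspidal edge and swallowtail singularities of wave-front/developable surfaces (Bruce–Giblin, or the Saji–Umehara–Yamada criteria). Writing the family of planes $F(x,u)=[x-\phi(u),\eta(u),\xi(u)]$, the envelope $\mathcal{B}$ is the discriminant of the one-parameter family, so $\mathcal{B}=\{F=F_u=0\}$. The singularity type at a given ruling point is governed by the first nonvanishing higher $u$-derivative of $F$ along the point: an $A_2$ (cuspidal edge) occurs when $F=F_u=F_{uu}=0$ but $F_{uuu}\neq 0$, and an $A_3$ (swallowtail) when additionally $F_{uuu}=0$ but $F_{uuuu}\neq 0$. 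Iterated differentiation of $F_u=-\mu[x-\phi,T,\xi]-\sigma[x-\phi,\eta,T]-1$, using the structure equations and the relation $[x-\phi,T,\xi]$, $[x-\phi,\eta,T]$ evaluated on $x-\phi=a\xi+b\eta$, should produce $F_{uu}$ proportional to $a\sigma'+b\mu'$, then $F_{uuu}$ proportional to $a\sigma''+b\mu''$, and $F_{uuuu}$ to $a\sigma'''+b\mu'''$ (modulo lower-order terms that vanish on the discriminant). Matching these with the $A_2$ and $A_3$ conditions yields (2) and (3).

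The main obstacle I expect is the careful bookkeeping in the iterated differentiation: when computing $F_{uu}$, $F_{uuu}$, $F_{uuuu}$ one must consistently substitute $x-\phi=a\xi+b\eta$ \emph{only after} differentiating (since $x$ is held fixed while $\phi,\xi,\eta$ vary), and repeatedly apply the structure equations to reduce every bracket to a multiple of $[T,\eta,\xi]=1$. The cross terms involving $\lambda$ (from $T'=\eta-\lambda\xi$) and the products $\sigma\mu$ must be shown to cancel or to contribute only to lower-order terms that already vanish on the discriminant, so that the leading term at each stage is genuinely the clean expression $a\sigma^{(k)}+b\mu^{(k)}$. Verifying these cancellations, together with checking that the transversality (nondegeneracy) hypotheses of the cuspidal-edge and swallowtail criteria are met whenever the stated derivative conditions hold, is the technical heart of the argument.
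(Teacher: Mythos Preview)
Your proposal is correct and follows essentially the same route as the paper: compute the successive $u$-derivatives of the plane family $F(x,u)=[x-\phi,\eta,\xi]$, observe that on $F=0$ each $F_{u^{(k+1)}}$ reduces to $a\sigma^{(k)}+b\mu^{(k)}$ modulo multiples of $F$ (the $\lambda$-terms you worried about appear exactly as $(\lambda\sigma-\mu)F$, etc., and hence vanish on the discriminant), and then invoke the Bruce--Giblin criteria for $A_2$/$A_3$ singularities of envelopes of planes. The opening paragraph about parametrizing $\mathcal{B}$ via the rulings $\Psi(u,t)$ is a harmless detour, since you end up using the discriminant description anyway; you can drop it and go straight to the $F^{(k)}$ computation.
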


\begin{proof}
Differentiating equation \eqref{eq:Ft} we obtain
\begin{equation}\label{eq:Ftt}
F_{uu}=-\mu'\left[x-\phi,T,\xi\right]-\sigma'\left[x-\phi,\eta,T\right]+(\lambda\sigma-\mu)F
\end{equation}
Thus $F_{uu}=0$ if and only if $a\sigma'+b\mu'=0$.
Differentiating equation \eqref{eq:Ftt} and discarding the last parcel we obtain
\begin{equation}\label{eq:Fttt}
F_{uuu}=-\mu''\left[x-\phi,T,\xi\right]-\sigma''\left[x-\phi,\eta,T\right]+(\lambda\sigma'-\mu')F
\end{equation}
Thus $F=F_u=F_{uu}=F_{uuu}=0$ if and only if $a\sigma''+b\mu''=0$. Differentiating equation \eqref{eq:Fttt} and substituting the values of $(a,b)$ we obtain
\begin{equation*}\label{eq:Ftttt}
F_{uuuu}=a\sigma'''+b\mu'''.
\end{equation*}
The result follows now from \cite{Giblin}, ch.6.
\end{proof}

\subsection{Curves with $\mu$ and $\sigma$ constant}

In this section we describe those immersions $\phi\subset M$ for which $\sigma$ and $\mu$ are constant. For these curves,
the affine focal set ${\mathcal B}$ reduces to a single line. 
We may assume, w.l.o.g., that $\sigma=-1$ and $\xi=\phi$. Thus equation \eqref{eq:StructCurves} can be written as
\begin{equation}\label{eq:CurvesCA}
\phi'''=-\rho\phi'+\tau\phi.
\end{equation}
The condition $\mu'=0$ can be written as 
\begin{equation}\label{eq:RolinhaTau}
\rho'+\tau=0,
\end{equation}
and thus this equation reduces to
\begin{equation*}
\phi'''=-\left( \rho\phi \right)',
\end{equation*}
or equivalently,
\begin{equation*}
\phi''=-\rho\phi +Q,
\end{equation*}
for some constant vector $Q$. Assuming $Q=(0,0,1)$ and writing $\phi(u)=(\gamma(u),z(u))$, for some planar curve $\gamma(u)$, we obtain the discoupled equations
\begin{equation}\label{eq:ODE}
\gamma''(u)=-\rho(u)\gamma(u),\ \ \ z''(u)=-\rho(u) z(u)+1.
\end{equation}

\bigskip

We say that a planar curve $\Gamma(u)$ is parameterized by affine arc-length if $\left[ \Gamma',\Gamma'' \right]=1$. In this case 
$$
\Gamma'''(u)=-\rho(u)\Gamma'(u),
$$
for a certain scalar function called the {\it affine curvature} of $\Gamma$. The function 
$$
z(u)=\left[  \Gamma(u)-O,\Gamma'(u)  \right]
$$ 
is called the {\it affine distance}, or {\it support function}, of $\Gamma$ with respect to the origin $O$ (\cite{Cecil},\cite{Nomizu}). 
Next theorem will be generalized to higher dimensions in section \ref{sec:Umbilic}:

\begin{thm}\label{thm:UmbilicCurves}
Given a planar curve $\Gamma$ parameterized by affine arc-length with affine curvature $\rho$, let $\gamma=\Gamma'$ and $z$ denote the support function of $\Gamma$ with respect to an arbitrary origin $O$. Then $(\gamma,z)$ satisfies equation \eqref{eq:ODE} and, conversely, any solution of \eqref{eq:ODE} is obtained in this way.
\end{thm}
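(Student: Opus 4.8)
The plan is to treat the two implications separately: the forward one is a direct differentiation, while the converse is a reconstruction argument driven by the linear ODE $w''=-\rho w$.

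For the forward direction I would set $\gamma=\Gamma'$ and differentiate the defining relations. Since $\Gamma$ is parameterized by affine arc length we have $[\Gamma',\Gamma'']=1$, and the affine curvature is defined by $\Gamma'''=-\rho\Gamma'$. Hence $\gamma''=\Gamma'''=-\rho\Gamma'=-\rho\gamma$, which is the first equation of \eqref{eq:ODE}. For the second, I differentiate the support function $z=[\Gamma-O,\Gamma']$ twice: the first derivative is $z'=[\Gamma-O,\Gamma'']$ (the term $[\Gamma',\Gamma']$ vanishes), and the second is $z''=[\Gamma',\Gamma'']+[\Gamma-O,\Gamma''']=1-\rho[\Gamma-O,\Gamma']=1-\rho z$. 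This is exactly $z''=-\rho z+1$, finishing this half.

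For the converse, suppose $(\gamma,z)$ solves \eqref{eq:ODE} for some $\rho$. First I note that $[\gamma,\gamma']$ is constant, since $\frac{d}{du}[\gamma,\gamma']=[\gamma,\gamma'']=-\rho[\gamma,\gamma]=0$; the non-degeneracy and the normalization used in this section (namely $[T,\phi'',\xi]=1$, which translates into $[\gamma,\gamma']=1$) make this constant equal to $1$. I then define $\Gamma(u)=\int_{u_0}^u\gamma(s)\,ds$. By construction $\Gamma'=\gamma$, so $[\Gamma',\Gamma'']=[\gamma,\gamma']=1$ and $\Gamma'''=\gamma''=-\rho\gamma=-\rho\Gamma'$; thus $\Gamma$ is parameterized by affine arc length, has affine curvature $\rho$, and satisfies $\gamma=\Gamma'$ as required.

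It remains to realize $z$ as the support function of $\Gamma$ for a suitable origin, and this last step is where I expect the real content to lie. Let $\tilde z_O=[\Gamma-O,\Gamma']$ be the support function for a provisional origin $O$; by the forward computation it satisfies $\tilde z_O''=-\rho\tilde z_O+1$, the same inhomogeneous equation as $z$, so $w:=z-\tilde z_O$ solves the homogeneous equation $w''=-\rho w$. The crucial observation is that the components $\gamma_1,\gamma_2$ are a basis of the two-dimensional solution space of $w''=-\rho w$: they solve it componentwise by the first equation of \eqref{eq:ODE}, and they are independent because their Wronskian is precisely $[\gamma,\gamma']=1\neq0$. Hence $w=c_1\gamma_1+c_2\gamma_2$ for constants $c_1,c_2$. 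Finally, translating the origin $O\mapsto O+v$ changes $\tilde z_O$ by $-[v,\gamma]=-(v_1\gamma_2-v_2\gamma_1)$, and as $v$ ranges over $\R^2$ this sweeps out all of $\mathrm{span}(\gamma_1,\gamma_2)$; so $v$ can be chosen to cancel $w$, giving $z=\tilde z_{O+v}$. Thus $z$ is exactly the support function of $\Gamma$ with respect to the shifted origin, and $(\gamma,z)$ is obtained from $\Gamma$ in the asserted way. The matching of the origin-translation freedom with the homogeneous solution space is the only nontrivial point; everything else is bookkeeping.
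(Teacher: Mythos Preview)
Your forward direction is identical to the paper's. For the converse, the paper simply observes that the solutions of \eqref{eq:ODE} form a $2$-parameter family and that the construction $(\Gamma,O)\mapsto(\gamma,z)$ already supplies a $2$-parameter family of solutions (via the choice of $O$), so the two must coincide; you carry out the same matching explicitly by integrating $\gamma$ to obtain $\Gamma$, identifying the homogeneous solution space of $w''=-\rho w$ with $\mathrm{span}(\gamma_1,\gamma_2)$ via the Wronskian $[\gamma,\gamma']=1$, and then absorbing the difference $z-\tilde z_O$ into a translation of $O$. This is the paper's dimension count made constructive---same idea, more detail---and your remark that the ambient normalization $[T,\phi'',\xi]=1$ forces $[\gamma,\gamma']=1$ fills a point the paper leaves implicit.
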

\begin{proof}
Assume that $\gamma=\Gamma'$ and $z$ is the support function of $\gamma$ with respect to $O$. Then 
$$
\gamma''(u)=\Gamma'''(u)=-\rho(u)\Gamma'(u)=-\rho(u)\gamma(u).
$$
Moreover $z'(u)=[\Gamma(u)-O,\Gamma''(u)]$ and so
$$
z''(u)=1+\left[\Gamma(u)-O,\Gamma'''(u)\right] =1-\rho(u)z(u),
$$
thus proving that $(\gamma,z)$ satisfies equation \eqref{eq:ODE}. Since the general solution of the linear system of ODE's \eqref{eq:ODE} is $2$-dimensional, we conclude
that $(\gamma,z)$ is a general solution of this system.
\end{proof}

A flattening point of $\phi$ is a point $\phi(u_0)$ such that $\tau(u_0)=0$.

\begin{cor}
Consider a closed curve $\phi$ with constant $\sigma$ and $\mu$. Then $\phi$ has at least six flattening points.
\end{cor}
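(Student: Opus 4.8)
The plan is to reduce the statement to the classical affine six-vertex (sextactic points) theorem for a closed convex planar curve. First I would use the normalization of the previous subsection: since $\sigma$ and $\mu$ are constant we may take $\sigma=-1$ and $\xi=\phi$, so that \eqref{eq:StructCurves} becomes \eqref{eq:CurvesCA}, and the condition $\mu'=0$ reads $\tau=-\rho'$ by \eqref{eq:RolinhaTau}. Hence $\phi(u_0)$ is a flattening point, $\tau(u_0)=0$, if and only if $\rho'(u_0)=0$; thus the flattening points of $\phi$ are exactly the critical points of $\rho$. By Theorem \ref{thm:UmbilicCurves} we may write $\phi=(\gamma,z)$ with $\gamma=\Gamma'$, where $\Gamma$ is a planar curve parameterized by affine arc length, $\rho$ is its affine curvature, and $z=[\Gamma-O,\Gamma']$ is its support function. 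In particular the critical points of $\rho$ are precisely the affine vertices (sextactic points) of $\Gamma$.

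The decisive observation is that closedness of the spatial curve $\phi$ forces the planar curve $\Gamma$ to close up. Indeed, $\phi$ being $L$-periodic means that both $\gamma$ and $z$ are $L$-periodic. Set $V=\oint_0^L\Gamma'\,du$, so that $\Gamma(u+L)=\Gamma(u)+V$ while $\Gamma'(u+L)=\gamma(u)=\Gamma'(u)$; then
\begin{equation*}
z(u+L)-z(u)=[\Gamma(u)+V-O,\Gamma'(u)]-[\Gamma(u)-O,\Gamma'(u)]=[V,\Gamma'(u)].
\end{equation*}
Periodicity of $z$ gives $[V,\Gamma'(u)]=0$ for every $u$, so $V$ is parallel to $\Gamma'(u)=\gamma(u)$ for all $u$. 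Since $[\gamma,\gamma']=[\Gamma',\Gamma'']=1>0$, the curve $\gamma$ turns monotonically around the origin and its direction is non-constant; hence $V=0$ and $\Gamma$ is closed. As $[\Gamma',\Gamma'']=1\neq0$ the curve $\Gamma$ is locally convex, and assuming $\phi$ is simple it is a closed convex curve.

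It then remains to apply the affine six-vertex theorem: a closed convex planar curve has at least six sextactic points, equivalently its affine curvature has at least six critical points. This yields at least six zeros of $\rho'$ and hence at least six flattening points of $\phi$. I expect the main obstacle to be this last global input. One proves it by the Hurwitz-type argument that $\rho'$ is $L^2$-orthogonal, with respect to the affine arc length $du$, to the six-dimensional space of functions $\Phi\circ\Gamma$ obtained by restricting quadratic polynomials $\Phi$ (conics) to $\Gamma$: were $\rho'$ to change sign at most four times, a conic through those sign-change points would meet $\Gamma$ only there, since a conic meets a convex curve in at most four points, and would then have the same sign as $\rho'$, contradicting orthogonality. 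The delicate part is guaranteeing the convexity of $\Gamma$ required by this argument, which is exactly why the closing-up computation above, forcing $V=0$ from the periodicity of the support function, is essential.
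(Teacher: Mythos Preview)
Your proof is correct and follows exactly the paper's route: reduce via \eqref{eq:RolinhaTau} to counting zeros of $\rho'$, identify $\rho$ with the affine curvature of the planar curve $\Gamma$ coming from Theorem~\ref{thm:UmbilicCurves}, and invoke the affine six-vertex theorem. The paper's own proof is much terser---it simply asserts that $\phi$ closed forces $\Gamma$ to be a closed convex planar curve and cites \cite{Buchin},\cite{Fidal}---so your periodicity computation showing $V=0$ and your sketch of the sextactic-point argument supply details the paper leaves implicit.
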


\begin{proof}
If $\phi$ is closed, then $\Gamma$ is a closed convex planar curve. By the affine six vertex theorem (\cite{Buchin},\cite{Fidal}), there exists at least six points 
of $\Gamma$ where $\rho'(u_0)=0$. This implies that $\tau(u_0)=0$ at these points. 
\end{proof}

\subsection{Curves in $\R^3$}

Theorem \ref{thm:UmbilicCurves} is also interesting in the context of spatial curves, not contained in a surface. Assume $\Phi:U\to\R^3$ 
is a smooth spatial curve parameterized by affine arc-length, i.e., $\left[ \Phi'(u),\Phi''(u),\Phi'''(u)\right]=1$. Then we can write 
\begin{equation}\label{eq:SpatialCurves}
\Phi''''=-\rho\Phi''+\tau\Phi',
\end{equation}
for some scalar functions $\rho$ and $\tau$. The plane passing through $\Phi(u)$ and generated by $\Phi'(u)$ and $\Phi'''(u)$ is called {\it affine
rectifying plane}, while the envelope $RS(\Phi)$ of these planes is called the {\it intrinsic affine binormal developable} of $\Phi$. It is proved in  \cite{Izu1}
that $RS(\Phi)$ is a cylindrical surface if and only if $\rho'+\tau=0$. 

Writing $\phi=\Phi'$ in equation \eqref{eq:SpatialCurves}, we obtain equation \eqref{eq:CurvesCA}. Theorem \ref{thm:UmbilicCurves} says that $\mu$ is constant
if and only if $\phi=(\gamma, z)$, where $\gamma=\Gamma'$, for some curve $\Gamma$ and $z=\left[\Gamma-O,\gamma\right]$. Thus we conclude that
$\Phi=(\Gamma,Z)$, where
\begin{equation}\label{eq:Z} 
Z(u)=\int_{u_0}^u \left[ \Gamma(v)-O,\Gamma'(v)  \right] dv.
\end{equation}
The function $Z(u)$ is the area of the planar region bounded by the angle $\Gamma(u_0)O\Gamma(u)$ and the arc of $\Gamma$, $u_0\leq v\leq u$. 

\begin{cor}
Consider a spatial curve $\Phi:U\to\R^3$ parameterized by affine arc-length, and let $\rho$ and $\tau$ be given by equation \eqref{eq:SpatialCurves}. 
Then the intrinsic affine binormal developable $RS(\Phi)$ is cylindrical if and only if we can write $\Phi=(\Gamma,Z)$, where $\Gamma$ is a planar curve and $Z$ is given by equation \eqref{eq:Z}, for some origin $O$ and some initial point $u_0$. 
\end{cor}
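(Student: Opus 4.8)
The plan is to connect the two characterizations that have already been established in this section. Theorem~\ref{thm:UmbilicCurves} gives an equivalence purely in terms of the scalar data: the condition $\mu$ constant (for a curve with $\sigma$ constant, normalized so that $\sigma=-1$ and $\xi=\phi$) is equivalent to $\rho'+\tau=0$, via equation~\eqref{eq:RolinhaTau}, and in that case $\phi=(\gamma,z)$ with $\gamma=\Gamma'$ and $z$ the support function of $\Gamma$. The preceding discussion has already quoted from \cite{Izu1} that the intrinsic affine binormal developable $RS(\Phi)$ is cylindrical if and only if $\rho'+\tau=0$, where $\rho,\tau$ are the scalar functions attached to the spatial curve $\Phi$ via equation~\eqref{eq:SpatialCurves}. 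So the whole corollary is essentially the observation that these are the same $\rho$ and $\tau$, once we set $\phi=\Phi'$.

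First I would record the translation between the spatial-curve data and the curve-in-surface data. Setting $\phi=\Phi'$ turns equation~\eqref{eq:SpatialCurves} into equation~\eqref{eq:CurvesCA}, so the functions $\rho$ and $\tau$ appearing in the structure equation for $\phi=\Phi'$ are exactly those in equation~\eqref{eq:SpatialCurves} for $\Phi$. This identification is the key bookkeeping step and it is immediate upon differentiating. Next I would invoke the cited result: $RS(\Phi)$ is cylindrical $\iff \rho'+\tau=0$. By equation~\eqref{eq:RolinhaTau}, $\rho'+\tau=0$ is exactly the condition $\mu'=0$, i.e.\ $\mu$ constant.

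Then I would apply Theorem~\ref{thm:UmbilicCurves} in the direction that $\mu$ constant forces $\phi=(\gamma,z)$ with $\gamma=\Gamma'$ for some planar curve $\Gamma$ parameterized by affine arc-length and $z$ its support function with respect to some origin $O$. Integrating the first coordinate gives $\Gamma$ back up to a constant of integration, and integrating $z(v)=[\Gamma(v)-O,\Gamma'(v)]$ yields precisely the $Z$ of equation~\eqref{eq:Z}, with a choice of lower limit $u_0$ corresponding to the integration constant in the second coordinate. Conversely, if $\Phi=(\Gamma,Z)$ with $Z$ given by~\eqref{eq:Z}, then $\Phi'=(\Gamma', Z')=(\gamma,z)$ has exactly the form produced by Theorem~\ref{thm:UmbilicCurves}, so $\mu$ is constant and hence $\rho'+\tau=0$ and $RS(\Phi)$ is cylindrical.

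I do not expect a serious obstacle here, since the corollary is a repackaging of Theorem~\ref{thm:UmbilicCurves} together with the quoted characterization from \cite{Izu1}. The one point requiring a little care is the passage between the differentiated and integrated forms: I must verify that the primitive $\Phi$ of $\phi=(\gamma,z)$ really does have its first $n$ coordinates equal to $\Gamma$ (not merely $\Gamma'$ integrated with an arbitrary vector constant) and its last coordinate equal to $Z$, so I would track the constants of integration explicitly and absorb them into the freedom in $O$ and $u_0$. Matching the arc-length normalizations of $\Phi$ and $\Gamma$ is the only other routine check, and it follows from $[\Gamma',\Gamma'']=1$ combined with the structure of equation~\eqref{eq:ODE}.
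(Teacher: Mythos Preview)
Your proposal is correct and follows essentially the same route as the paper's own proof: identify $\phi=\Phi'$ so that the $\rho,\tau$ of \eqref{eq:SpatialCurves} coincide with those of \eqref{eq:CurvesCA}, use the cited result from \cite{Izu1} that $RS(\Phi)$ is cylindrical iff $\rho'+\tau=0$, translate this via \eqref{eq:RolinhaTau} into $\mu$ constant, and then invoke Theorem~\ref{thm:UmbilicCurves} and integrate. Your extra care with the constants of integration (absorbing them into $O$ and $u_0$) is a welcome clarification that the paper leaves implicit.
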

\begin{proof}
We have that $RS(\Phi)$ is cylindrical if and only if $\rho'+\tau=0$. By equation \eqref{eq:RolinhaTau}, this condition is equivalent to $\mu$ constant. 
Then Theorem \ref{thm:UmbilicCurves} says that $\mu$ constant is equivalent to $\phi=(\gamma,z)$, where $\gamma=\Gamma'$ for some planar curve $\Gamma$ and $z$ the support function of $\Gamma$ with respect to some origin $O$. We conclude that $RS(\Phi)$ cylindrical is equivalent to $\Phi=(\Gamma,Z)$, where $Z$ is given by equation \eqref{eq:Z}.
\end{proof}

\subsection{A projectively invariant six vertex theorem}

The quantity $\rho'+2\tau$ is projectively invariant. In fact, $(\rho'(u)+2\tau(u))^{1/3}du$ is the {\it projective length} of the curve $\phi$ and $\rho'+2\tau=0$ 
if and only if $\phi$ is contained in a quadratic cone (for details, see \cite{Guieu}).

The curve $\phi$ is projectively equivalent to a planar curve and in this case $\tau=0$. Thus
$$
\phi'''=-\rho\phi'
$$
and so $\rho$ is the affine curvature of $\phi$. By the six vertex theorem for planar closed convex curves (\cite{Buchin}), $\phi$ admits at least six points 
where $\rho'=0$, or equivalently, $\rho'+2\tau=0$. 
This means that a closed convex curve $\phi$ admits at least six points with higher order contact with a quadratic cone.

\section{Two classes of normally flat immersions}

In this section we consider two classes of normally flat submanifolds $N\subset M$: (1) $N$ contained in a hyperplane $L$ and (2) $M$ hyperquadric. 

\subsection{Submanifolds contained in hyperplanes}\label{sec:Hyperplanar}

For $N$ contained in a hyperplane $L$, take $\xi$ tangent to $M$ such that $\xi\cdot\n=1$, where $\cdot$ denotes inner product and $\n$ is the euclidean normal of $L$. Then $\xi$ 
is the parallel Darboux vector field of the immersion $N\subset M$. 

Let $\eta$ and $h$ denote the Blaschke normal vector field and metric of $N\subset L$, respectively. Then $\eta$ is parallel and, for a $h$-orthonormal frame of $TN$ we have
$$
\left[ X_1,..., X_n, \eta \right]=1.
$$
Since $g=h$, $\{X_1,...,X_n\}$ is $g$-orthonormal and 
$$
\left[ X_1,..., X_n, \eta, \xi \right]=1.
$$
Thus $\eta\in A$. We conclude that $\B\cap L$ is exactly the affine focal set of the immersion $N\subset L$ (\cite{Davis}, ch.8).

\begin{prop}
Let $N\subset M$ be an immersion such that $N$ is contained in a hyperplane $L$. Then $N\subset M$ is umbilic if and only if
$N\subset L$ is an affine sphere and the envelope of tangent spaces of $N\subset M$ is a cone.
\end{prop}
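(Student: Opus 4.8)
The plan is to reduce the umbilic condition to two independent scalar conditions, one on each of the two shape operators, and then match each condition with one of the two stated geometric hypotheses. Since the affine normal plane is $\A=\mathrm{span}\{\xi,\eta\}$, with $\xi$ the parallel Darboux field normalized by $\xi\cdot\n=1$ and $\eta$ the Blaschke normal of $N\subset L$, any $\zeta\in\A$ has the form $\zeta=a\xi+b\eta$ and satisfies $S_\zeta=aS_1+bS_2$ (the derivatives $X(a)\xi,X(b)\eta$ contribute only to the transversal part of $D_X\zeta$). Hence $N\subset M$ is umbilic if and only if both $S_1=S_\xi$ and $S_2=S_\eta$ are, at each point, multiples of the identity. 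It therefore suffices to prove: (i) $S_2$ is a multiple of the identity if and only if $N\subset L$ is an affine sphere; and (ii) $S_1$ is a multiple of the identity if and only if the envelope of tangent spaces $ET_N$ is a cone.

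For (i), I would first identify $S_2$ with the Blaschke shape operator $S^L$ of the hypersurface immersion $N\subset L$. Because $\eta$ is tangent to $L$, so that $\eta\cdot\n=0$, while $\xi\cdot\n=1$, taking the $\n$-component of $D_X\eta$ together with the fact that $\eta$ is the equiaffine (Blaschke) normal of $N\subset L$ forces $\tau_2^1=\tau_2^2=0$ and $D_X\eta=-\phi_*(S^LX)$; comparing with the codimension-$2$ structure equation gives $S_2=S^L$. Now $N\subset L$ is an affine sphere exactly when $S^L=\mu\,\mathrm{id}$ with $\mu$ constant. Since $\eta$ is parallel, the argument of Lemma \ref{lem:ConstantCurvature} applies verbatim to $\eta$ and shows (for $n\ge 2$) that constancy of $\mu$ is automatic once $S_2$ is pointwise a multiple of the identity; thus (i) follows.

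For (ii), recall $ET_N=\{p+u\xi(p)\}$, the ruled surface whose rulings are the $\xi$-lines, and that it is a cone precisely when all these lines pass through a common point $O$. In the forward direction, if $S_1=\sigma\,\mathrm{id}$ then $\sigma$ is constant by Lemma \ref{lem:ConstantCurvature}, and $O=p+\sigma^{-1}\xi$ is a fixed vertex exactly as in Corollary \ref{cor:VisualContour}, so $ET_N$ is a cone. Conversely, if the rulings meet at a common point $O$, I write $O-\phi=u(p)\xi$ and apply $D_X$; using $\tau_1^1=0$ (since $\xi$ is parallel), the $\xi$-component yields $X(u)=0$, so $u\equiv u_0$ is constant, and the tangential component gives $-X=-u_0S_1X$, whence $S_1=u_0^{-1}\,\mathrm{id}$. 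Combining (i) and (ii) gives the asserted equivalence.

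The step I expect to be the main obstacle is the clean identification $S_2=S^L$ in (i): one must verify that the codimension-$2$ normalization fixing $\xi$ and $\eta$ (the determinant conditions defining the affine metric and the affine normal plane) is compatible with the intrinsic Blaschke normalization of $N\subset L$, i.e. that $g=h$ and that the transversal forms $\tau_2^1,\tau_2^2$ vanish. This is precisely where the geometry of the ambient hyperplane enters, through $\eta\in L$ and $\xi\cdot\n=1$; once this compatibility is established, the remaining arguments are direct differentiations.
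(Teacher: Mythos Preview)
Your proposal is correct and follows exactly the same decomposition as the paper: the immersion is umbilic iff both $S_\xi$ and $S_\eta$ are pointwise multiples of the identity, and one then matches $S_\eta$ umbilic with $N\subset L$ being an affine sphere and $S_\xi$ umbilic with $ET_N$ being a cone. The paper's proof is a two-line sketch stating precisely these two equivalences without justification; you have supplied the details (the identification $S_2=S^L$, the use of Lemma~\ref{lem:ConstantCurvature} to force constancy of the eigenvalue, and the cone argument via $O=p+\sigma^{-1}\xi$), and the ``main obstacle'' you flag, the compatibility $g=h$ with $\eta$ parallel, is exactly what the paragraph preceding the proposition establishes.
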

\begin{proof}
The vector field $\eta$ is umbilic if and only if $N\subset L$ is an affine sphere. The vector field $\xi$ is umbilic if and only the envelope of tangent spaces is a cone. 
\end{proof}

\subsection{Submanifolds contained in hyperquadrics}

In this section we consider the case $M$ hyperquadric. By an affine transformation of $\R^{n+2}$, we may assume that $M$ is given by
$$
\sum_{i=1}^{n+2}\epsilon_i x_i^2=1,
$$
where $\epsilon_i=\pm 1$. Denote by $<\cdot,\cdot>$ the non-degenerate, possibly indefinite, metric  
$$
<A,B>=\sum_{i=1}^{n+2} \epsilon_i A_iB_i.
$$
Then the tangent space of $M$ is $\phi^{\perp}$, where the orthogonality is taken with respect to $<\cdot,\cdot>$. Moreover, one can verify that
$\phi$ is the affine Blaschke normal and $<\cdot,\cdot>$ restricted to the tangent space of $M$ is the affine Blaschke metric $h$ of $M$.

Consider now $N\subset M$. The non-degeneracy hypothesis implies that $<v,v>\neq 0$, for any $v\in T_pN$. This implies that we can choose $\xi$ orthogonal to $N$ with 
\begin{equation}\label{eq:XiUnitary}
<\xi,\xi>=\epsilon, \ \ \epsilon=\pm 1.
\end{equation}

\begin{lem} 
We have that:
\begin{enumerate}
\item $\xi$ is a parallel vector field in the Darboux direction. 
\item The metric $g$ is the restriction of $h$ to $N$. 
\end{enumerate}
\end{lem}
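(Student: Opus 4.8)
The plan is to exploit the one feature that makes the hyperquadric so convenient: the flat connection $D$ of $\R^{n+2}$ is compatible with the constant bilinear form $\langle\cdot,\cdot\rangle$, so that $X\langle A,B\rangle=\langle D_XA,B\rangle+\langle A,D_XB\rangle$ for all fields $A,B$, and the position field obeys $D_X\phi=X$. At each $p\in N$ I work with the $\langle\cdot,\cdot\rangle$-orthogonal splitting $\R^{n+2}=T_pN\oplus\R\xi\oplus\R\phi$. This is legitimate because $\phi$ is $\langle,\rangle$-orthogonal to $T_pM=\phi^{\perp}$, $\xi$ is chosen orthogonal to $N$ inside $T_pM$, and all three summands are nondegenerate: $\langle\phi,\phi\rangle=1$, $\langle\xi,\xi\rangle=\epsilon$ by \eqref{eq:XiUnitary}, and $\langle,\rangle|_{T_pN}$ is nondegenerate by the non-degeneracy hypothesis.

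For item (1) I differentiate the three relations defining $\xi$. Fix $X\in TN$. From $\langle\xi,\phi\rangle=0$ and $D_X\phi=X$ I get $\langle D_X\xi,\phi\rangle=-\langle\xi,X\rangle=0$, the last equality because $\xi\perp N$; hence $D_X\xi$ has no $\phi$-component, i.e.\ $D_X\xi$ is tangent to $M$. This is exactly the Darboux condition, so $\xi$ lies in the Darboux direction. Next, from $\langle\xi,\xi\rangle=\epsilon$ constant I get $\langle D_X\xi,\xi\rangle=0$, so $D_X\xi$ has no $\xi$-component either. Combining, $D_X\xi\in T_pN$ for every $X\in TN$, which is precisely the statement that $\xi$ is parallel.

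For item (2) I compute the affine metric directly from its definition $G_{\mathfrak{u}}(X,Y)=[X_1,\dots,X_n,D_XY,\xi]$. Only the component of $D_XY$ transversal to $T_pM=\mathrm{span}(X_1,\dots,X_n,\xi)$ survives the determinant, namely its $\phi$-component; and differentiating $\langle Y,\phi\rangle=0$ gives $\langle D_XY,\phi\rangle=-\langle X,Y\rangle$, so that component equals $-\langle X,Y\rangle\,\phi$. Therefore $G_{\mathfrak{u}}(X,Y)=-\Omega\,\langle X,Y\rangle$, where $\Omega:=[X_1,\dots,X_n,\phi,\xi]$ is constant once the frame is fixed. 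Thus $G_{\mathfrak{u}}$ is proportional to $\langle,\rangle|_N$, and after dividing by $|\det_{\mathfrak{u}}G_{\mathfrak{u}}|^{1/(n+2)}$ in the definition of $g$, the metric $g$ is a scalar multiple of $\langle,\rangle|_N=h|_N$.

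To pin that scalar to $1$ I evaluate $\Omega$ on a $\langle,\rangle$-orthonormal frame $\langle X_i,X_j\rangle=e_i\delta_{ij}$, $e_i=\pm1$. Comparing the Euclidean volume form with the $\langle,\rangle$-Gram determinant of the orthogonal basis $\{X_1,\dots,X_n,\phi,\xi\}$ (squared norms $e_1,\dots,e_n,1,\epsilon$) gives $\Omega^2=\big(\epsilon\prod_i e_i\big)\big(\prod_j\epsilon_j\big)$, while invariance of the signature along the orthogonal sum yields $\prod_j\epsilon_j=\epsilon\prod_i e_i$; hence $\Omega^2=1$. Feeding $|\Omega|=1$ through the normalization gives $g(X_i,X_j)=-\Omega\,e_i\delta_{ij}$, which equals $\langle X_i,X_j\rangle$ once the frame orientation is chosen so that $\Omega=-1$. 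This last bookkeeping is the only real obstacle: the affine metric is defined only up to the sign carried by the orientation of $N$, so one must verify $\Omega^2=1$ via signature-compatibility and fix the orientation to make the normalized sign $+1$; every other step reduces to differentiating the orthogonality relations.
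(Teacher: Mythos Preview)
Your proof of item (1) is essentially identical to the paper's: both differentiate $\langle\xi,\phi\rangle=0$ to get tangency to $M$ (the Darboux condition) and $\langle\xi,\xi\rangle=\epsilon$ to get tangency to $N$ (parallelism).

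For item (2) the two arguments diverge. The paper exploits the fact, stated just before the lemma, that $\phi$ is the \emph{Blaschke} normal of $M$ and $h=\langle,\rangle|_{TM}$ its Blaschke metric; hence an $h$-orthonormal frame $\{X_1,\dots,X_n,\xi\}$ of $T_pM$ automatically satisfies the Blaschke volume normalization $[X_1,\dots,X_n,\xi,\phi]=\pm1$, and the identity $[X_1,\dots,X_n,D_{X_i}X_j,\xi]=\delta_{ij}$ drops out in one line. You instead bypass the Blaschke machinery entirely and compute $G_{\mathfrak u}$ directly, reducing the normalization to the Gram-determinant identity $\Omega^2=(\epsilon\prod_i e_i)(\prod_j\epsilon_j)$ together with signature additivity along the orthogonal splitting. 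Your route is longer but more self-contained (it does not appeal to the Blaschke theory of $M$), and it is more explicit about the orientation/sign ambiguity in the definition of $g$, which the paper silently absorbs. Both arguments are correct; the paper's is shorter because it cashes in on a normalization already built into the Blaschke setup.
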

\begin{proof}
To prove 1, write
$$
D_X\xi=\bar\nabla_X\xi+h(X,\xi)\eta,
$$
where $\bar\nabla_X\xi$ is tangent to $M$. Since $h(X,\xi)=0$, we conclude that $D_X\xi$ is tangent to $M$ and so $\xi$ is a Darboux vector field of $N\subset M$. 
Differentiating equation \eqref{eq:XiUnitary} 
we obtain  $<D_X\xi,\xi>=0$, for any $X$ tangent to $N$, which implies that $D_X\xi$ is tangent to $N$ and so $\xi$ is parallel. 

To prove 2, let $\{X_1,..,X_n\}$ be a $h$-orthonormal frame of $TN$. Then $\{X_1,..,X_n,\xi\}$ is a $h$-orthonormal base of $TM$ and so
$$
\left[ X_1, ...,X_n, D_{X_i}X_j,\xi \right]=\delta_{ij}.
$$
This last equation implies that $\{X_1,...,X_n\}$ is a $g$-orthonormal frame of $TN$ and thus $g=h|_N$.
\end{proof}

\begin{prop}
The vector field $\eta$ belongs to the affine normal plane, is parallel and umbilic.
\end{prop}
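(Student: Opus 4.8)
The plan is to take $\eta=\phi$, the position vector, which for our choice of coordinates is exactly $\phi-Q$ with $Q=0$ the centre of the hyperquadric $M$, and to prove the three assertions in increasing order of difficulty. Everything rests on one elementary identity: since $D$ is the canonical flat connection and $\phi$ is the position vector, $D_X\phi=\phi_*(X)=X$ for every $X\in TN$.

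First I would dispatch parallelism and umbilicity, both of which are immediate. Since $D_X\eta=X$ is tangent to $N$, the field $\eta$ is parallel by definition. Matching $D_X\eta=X$ against the structure equation $D_X\eta=-\phi_*(S_\eta X)+\tau_\eta^1(X)\xi+\tau_\eta^2(X)\eta$ forces $S_\eta=-I$ and $\tau_\eta^1=\tau_\eta^2=0$; the first of these says $\eta$ is umbilic and the second re-confirms that it is parallel.

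The substantive point is that $\eta=\phi$ genuinely lies in $\A$, and here I would exploit the metric. The field $\phi$ is transversal to $M$, because $\langle\phi,\phi\rangle=1$ places it outside $T_\phi M=\phi^{\perp}$. Using $g=h|_N$ from the preceding lemma, a $g$-orthonormal frame $\{X_1,\dots,X_n\}$ of $T_pN$ together with the Darboux field $\xi$ (orthogonal to $N$ with $\langle\xi,\xi\rangle=\epsilon$) and $\phi$ (orthogonal to $T_pN$ and to $\xi$, with $\langle\phi,\phi\rangle=1$) form a $\langle\cdot,\cdot\rangle$-orthonormal basis of $\R^{n+2}$; hence $c:=[X_1,\dots,X_n,\phi,\xi]$ equals $\pm1$ and is in particular constant. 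I would then write $\phi=c\,\eta_0+\beta\xi+w$, where $\eta_0$ is the normalized generator of $\A$ (so that $h^2=g$ in the frame $\{\xi,\eta_0\}$) and $w\in T_pN$, the coefficient of $\eta_0$ being precisely $[X_1,\dots,X_n,\phi,\xi]=c$. Extracting the $\eta_0$-component of $D_X\phi$ from the structure equations, using $\tau_1^2=\tau_2^2=0$, yields $X(c)+h^2(X,w)$; since $D_X\phi=X$ is tangent to $N$ this component vanishes, and as $c$ is constant it reduces to $h^2(X,w)=0$ for all $X$, whence $w=0$ by non-degeneracy of $h^2=g$. Thus $\phi=c\,\eta_0+\beta\xi\in\A$.

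The main obstacle is exactly this membership claim. Parallelism and umbilicity fall out of $D_X\phi=X$ with no work, but deducing $\phi\in\A$ requires excluding a tangential component of $\phi$; the mechanism that achieves this is the constancy of the volume coefficient $c$, which comes from orthonormality and pushes $w$ into the kernel of the non-degenerate form $g$.
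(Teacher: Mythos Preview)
Your argument is correct, but it takes a longer path than the paper's. Both proofs dispose of parallelism and umbilicity the same way, via $D_X\phi=X$. For the membership $\phi\in\A$, the paper simply notes that a $g$-orthonormal frame $\{X_1,\dots,X_n\}$ is $h$-orthonormal (by the preceding lemma), so $\{X_1,\dots,X_n,\xi\}$ is an $h$-orthonormal basis of $T_pM$; since $\eta=\phi$ is the Blaschke normal of $M$, this gives $[X_1,\dots,X_n,\xi,\eta]=1$. Together with $\tau_2^2=0$ (already known from parallelism), $\eta$ satisfies the two defining conditions of the affine normal plane bundle stated in Section~2.3, so $\eta\in\A$ by that characterization.

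You establish the same determinant identity by the same orthonormality argument, but instead of invoking the characterization of $\A$ you run a direct decomposition $\phi=c\,\eta_0+\beta\xi+w$ and kill the tangential part $w$ by differentiating and reading off the $\eta_0$-component. This is perfectly valid and has the virtue of being self-contained: it does not appeal to the uniqueness statement for $\A$ from \cite{LSan}. The cost is the extra computation. In effect, your argument re-proves a special case of that uniqueness result on the spot, whereas the paper treats it as a black box.
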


\begin{proof}
It is clear that $\eta$ is both parallel and umbilic, and we must verify that $\eta\in\A$. Consider a $g$-orthonormal frame $\{X_1,...,X_n\}$.
Since the metric $g$ is the restriction to $N$ of the Blaschke metric $h$ of $M$, this frame is also $h$-orthonormal. Thus
$$
h(X_1,...,X_n,\xi)=1,
$$ 
which implies that
$$
\left[  X_1,...,X_n,\xi,\eta \right]=1.
$$
This last equation implies that $\eta\in\A$. 
\end{proof}

Fix $p=(x_1,...,x_{n+2})\in N$ and denote by $\xi(p)$ the Darboux vector field at $p$. The tangent space $T_pN$ is orthogonal to both $p$ and $\xi(p)$. In particular,
$T_pN$ is contained in $x_{n+2}=c$, for some $c$, if and only if the vectors $p$, $\xi(p)$ and $e_{n+2}$ are coplanar. 

\begin{prop}
Let $N$ be a submanifold contained in a hyperquadric $M$. Then the immersion $N\subset M$ is umbilic if and only if $N$ is contained in a hyperplane.
\end{prop}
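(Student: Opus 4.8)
The plan is to reduce the umbilic condition to a statement about the single shape operator $S_\xi$ and then to convert that statement, via one differentiation, into the existence of a fixed hyperplane containing $N$. The starting observation is that the centre of $M$ is the origin, so $\eta=\phi$ is the position vector and $D_X\eta=X$, giving $S_\eta=-I$. Since $\A=\mathrm{span}\{\xi,\eta\}$ and $\zeta\mapsto S_\zeta$ is linear, the immersion is umbilic if and only if $S_\xi$ is a multiple of the identity. Thus it suffices to prove: $S_\xi=\sigma I$ for some scalar if and only if $N$ lies in a hyperplane. Throughout I will use that $\xi$ is a parallel Darboux field, so $D_X\xi=-S_\xi(X)$ is tangent to $N$, together with the identification $T_pN=\{p,\xi\}^\perp$ recorded just before the statement, which shows that $\A=\mathrm{span}\{\phi,\xi\}=(T_pN)^\perp$ is complementary to $T_pN$.

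For the direct implication, assume the immersion is umbilic, so $S_\xi=\sigma I$; by Lemma \ref{lem:ConstantCurvature} the function $\sigma$ is then constant. I set $a=\sigma\phi+\xi$ and compute $D_Xa=\sigma X+D_X\xi=\sigma X-\sigma X=0$, so $a$ is a fixed vector. Using $\langle\phi,\phi\rangle=1$ (as $\phi\in M$) and $\langle\xi,\phi\rangle=0$ (as $\xi$ is tangent to $M$), I obtain $\langle a,\phi\rangle=\sigma$, a constant, so $N$ is contained in the hyperplane $\{x:\langle a,x\rangle=\sigma\}$. Finally $a\ne 0$: if $a=0$ then $\xi=-\sigma\phi$, whence $0=\langle\xi,\phi\rangle=-\sigma$ and then $\xi=0$, contradicting $\langle\xi,\xi\rangle=\epsilon\ne 0$.

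For the converse, suppose $N\subset\{x:\langle a,x\rangle=c\}$ for a fixed nonzero $a$. Differentiating along $N$ gives $\langle a,X\rangle=0$ for every $X\in T_pN$, i.e. $a\in(T_pN)^\perp=\mathrm{span}\{\phi,\xi\}$, so I may write $a=\alpha\,\phi+\beta\,\xi$ for functions $\alpha,\beta$ on $N$. Differentiating this and using $D_X\phi=X$, $D_X\xi=-S_\xi(X)$ yields
$$
0=D_Xa=\big(X(\alpha)\phi+X(\beta)\xi\big)+\big(\alpha X-\beta S_\xi(X)\big).
$$
The first bracket lies in $\A$ and the second in $T_pN$, and these are complementary, so both vanish: the $\A$-part gives $X(\alpha)=X(\beta)=0$, so $\alpha,\beta$ are constant, while the tangential part gives $\beta\,S_\xi(X)=\alpha X$ for all $X$. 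If $\beta=0$ then $\alpha X=0$ forces $\alpha=0$ and hence $a=0$, a contradiction; therefore $\beta\ne 0$ and $S_\xi=(\alpha/\beta)I$, which is the umbilic condition.

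The computations are routine; the one point that needs care is the systematic use of the splitting $\R^{n+2}=T_pN\oplus\A(p)$ to separate the tangential and normal parts of $D_Xa$, together with the verification that the vector $a$ is genuinely nonzero (equivalently, that $\beta\ne 0$), which is exactly what makes the hyperplane honest in one direction and the scalar $\alpha/\beta$ well defined in the other. I expect this nondegeneracy bookkeeping, rather than any hard estimate, to be the only real obstacle, and I note in passing that the two directions are essentially the same identity $\alpha X=\beta S_\xi(X)$ read in opposite order.
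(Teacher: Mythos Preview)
Your proof is correct. Both directions hinge on the same observation the paper uses implicitly, namely that $(T_pN)^\perp=\mathrm{span}\{\phi,\xi\}$, so a hyperplane normal must be a combination $\alpha\phi+\beta\xi$, and differentiating this relation is what links ``constant combination'' to ``$S_\xi$ a multiple of the identity.''

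The converse in the paper is essentially your argument in compressed form: it writes $p+t\xi=\lambda e_{n+2}$ and pairs with $p$ to see $\lambda$ is constant, which is your computation $X(\alpha)=X(\beta)=0$ followed by the tangential identity $\beta S_\xi=\alpha I$. The forward direction differs in presentation: the paper invokes the earlier corollary that the affine focal set $\mathcal B$ of an umbilic normally flat immersion is a single line, takes its direction as $e_{n+2}$, and reads off that $e_{n+2}\in\mathrm{span}\{p,\xi(p)\}$ for every $p$; your vector $a=\sigma\phi+\xi$ is precisely that direction, constructed by hand. Your route is slightly more self-contained (it does not appeal to the focal-set corollary and uses only Lemma~\ref{lem:ConstantCurvature}), while the paper's route makes the geometric content --- the degeneration of $\mathcal B$ to a line orthogonal to the hyperplane --- more visible.
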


\begin{proof}
Assume first that $N\subset M$ is umbilic and let $L$ be the hyperplane orthogonal to the line $\B$. Assuming that $\B$ is parallel to $e_{n+2}$, the above remark 
implies that $T_pN$ is contained in $x_{n+2}=c$, for some $c$. This implies that $N$ is contained in a hyperplane. 

Conversely, assume that $N=L\cap M$, where $L$ is the hyperplane given by $x_{n+2}=c$. Then we can write $p+t\xi=\lambda e_{n+2}$, for some scalars $t$ and $\lambda$. From
$<\xi,p>=0$, $<p,p>=1$, $<p,e_{n+2}>=c\epsilon_{n+2}$ we obtain
$$
1=\lambda c\epsilon_{n+2},
$$
thus proving that $\lambda$ is independent of $p\in N$. We conclude that such an immersion is umbilic. 
\end{proof}

\section{Umbilic and normally flat immersions}\label{sec:Umbilic}

In this section, we give a geometric characterization of umbilic and normally flat immersions for $n\geq 2$ and 
immersions with $\sigma$ and $\mu$ constant, for $n=1$. By propositions \ref{prop:UNF} and  \ref{prop:SigmaMuConstant}, these conditions
are equivalent to $\phi$ being a visual contour with a constant point $Q$ in the affine normal planes. In order to keep the statements shorter, 
we shall refer to these immersions as umbilic and normally flat even in case $n=1$.

\subsection{Main theorem}

Consider a non-degenerate immersion $f:U\subset\R^{n}\to\R^{n+1}$ and denote by $\nu:U\to\R^{n+1}_{*}$ its Blaschke co-normal map. Fix an origin $O\in\R^{n+1}$ 
and define the immersion $\phi:U\to\R_{*}^{n+1}\times\R$ by 
\begin{equation}\label{eq:UmbilicPhi}
\phi(u)=\left(  \nu(u),  \nu(u)\cdot(f(u)-O) \right), 
\end{equation}
where $u=(u_1,...,u_n)\in U$ and $\nu(u)\cdot(f(u)-O)$ is the {\it affine distance}, or {\it support function}, of $f$ with respect to $O$ (\cite{Cecil}). 
Take $\xi=\phi$ so that $\phi$ becomes a visual contour. 
Our main theorem is the following:

\begin{thm}\label{thm:Umbilic}
Any immersion given by equation \eqref{eq:UmbilicPhi} is umbilic and normally flat. Conversely, any umbilic and normally flat
immersion $\phi:U\to M$ is given by equation \eqref{eq:UmbilicPhi}, for some immersion $f:U\to\R^{n+1}$ and origin $O\in\R^{n+1}$.
\end{thm}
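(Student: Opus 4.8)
**The plan is to prove the two directions separately, relying heavily on the structure equations for the Blaschke co-normal established in affine hypersurface theory.**

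First, for the forward direction, I would recall the standard structure equations for a non-degenerate immersion $f:U\to\R^{n+1}$ with Blaschke co-normal $\nu$. The key facts are that the co-normal satisfies $\nu\cdot f_{u_i}=0$ (so $\nu$ is conormal to the tangent spaces), that $d\nu$ is tangent to the image of $\nu$ in a precise sense governed by the shape operator of $f$, and that $\nu$ itself is a Blaschke-type immersion into $\R^{n+1}_*$ whose affine normal is again related to $f$. Writing $\phi=(\nu,\,\nu\cdot(f-O))$ and taking $\xi=\phi$ as the Darboux field, I would compute $\phi_{u_i}=(\nu_{u_i},\,\nu_{u_i}\cdot(f-O))$, using $\nu_{u_i}\cdot f_{u_i}$-type relations together with the identity $\nu\cdot f_{u_i}=0$ to simplify the last coordinate. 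The goal is to exhibit explicit parallel umbilic fields $\xi$ and a second field $\eta$ spanning $\A$, thereby invoking Proposition \ref{prop:UNF} (or its $n=1$ analogue, Proposition \ref{prop:SigmaMuConstant}) to conclude umbilicity and normal flatness. Since $\xi=\phi$ is automatically a visual contour with $O=0$ in $\R^{n+2}$, the real content is producing the constant point $Q$ in all the normal planes, which should emerge from the affine-sphere-like behavior of the co-normal map.

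For the converse, I would start from an arbitrary umbilic and normally flat immersion $\phi:U\to M\subset\R^{n+2}$. By Proposition \ref{prop:UNF}, there are fixed points $O$ (in all tangent spaces of $M$) and $Q\neq O$ (in all normal planes $\A(p)$). After an affine change placing $O$ at the origin, I can take $\xi=\phi$, and split coordinates as $\R^{n+2}=\R^{n+1}_*\times\R$ so that the last coordinate is adapted to the line $\B$ through $O$ and $Q$. The first $n+1$ coordinates of $\phi$ then define a map $\nu:U\to\R^{n+1}_*$; the task is to recognize $\nu$ as the Blaschke co-normal of some hypersurface $f$ and to recover $f$ and the origin so that the last coordinate of $\phi$ is exactly the support function $\nu\cdot(f-O)$. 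The existence of $f$ should follow from integrating the relation $\nu\cdot df=0$ (which must hold because the tangent spaces pass through $O$), and verifying that $\nu$ has the correct normalization to be a genuine Blaschke co-normal will use the affine metric condition $g=h$ and the parallel/umbilic structure of $\eta$.

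\textbf{The main obstacle} I anticipate is the converse: one must reconstruct the hypersurface $f$ from the co-normal data $\nu$ alone, which is an integrability problem. Given $\nu$, the candidate $f$ is determined (up to the translation ambiguity absorbed by $O$) by solving a linear PDE system of the form $\nu\cdot f_{u_i}=0$ together with a normalization forcing $\nu$ to be the Blaschke conormal of $f$; showing this system is compatible and that its solution has the support function matching the last coordinate of $\phi$ is where the umbilic and normally flat hypotheses must be used in full. Concretely, the apolarity and parallelism of $\eta$ (equivalently, the constancy of $Q$ and of the eigenvalue $\mu$) should translate precisely into the condition that $\nu$ integrates to a co-normal of some $f$, and I would check this by differentiating $\nu\cdot(f-O)$ and matching it against the computed derivatives of $\phi$'s last coordinate.
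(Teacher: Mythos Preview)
Your forward direction is essentially the paper's: compute $\phi_*X$ and $D_X\phi_*Y$, identify the component transversal to $M$ as $-h(X,Y)Q$ with $Q=(0,1)$, check that $h^2=h$, and verify the volume normalization $[\phi_*X_1,\dots,\phi_*X_n,\phi,Q]=[\nu_*X_1,\dots,\nu_*X_n,\nu]=-1$ so that $Q\in\A$; Proposition~\ref{prop:UNF} then finishes it.

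For the converse your outline is also on the right track, but you are making the reconstruction of $f$ harder than it needs to be, and the ``integrability obstacle'' you flag is not actually present. You propose to recover $f$ from the first $n+1$ coordinates $\psi$ of $\phi$ by solving the PDE system $\psi\cdot df=0$ together with a Blaschke normalization, and you worry about compatibility. The paper sidesteps this entirely: it uses \emph{all} of $\phi=(\psi,z)$, not just $\psi$, and defines $f$ \emph{pointwise} by the linear algebraic system
\[
\psi\cdot(f-O)=z,\qquad \psi_*X_i\cdot(f-O)=X_i(z)\quad(i=1,\dots,n),
\]
which is $n+1$ equations for $f-O\in\R^{n+1}$ and is nonsingular because $\{\psi,\psi_*X_1,\dots,\psi_*X_n\}$ is a frame. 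Differentiating the first relation and subtracting the second immediately gives $\psi\cdot f_*X=0$, so $\psi$ is a scalar multiple $\lambda\nu$ of the Blaschke conormal of $f$ with no integration required. The remaining work is exactly the normalization step you anticipated: compare the two determinant identities $[\nu_*X_1,\dots,\nu_*X_n,\nu]=-\lambda^{n+1}$ (from the $\A$-volume condition) and $[\nu_*X_1,\dots,\nu_*X_n,\nu]=-\lambda^{n/2}$ (from $h^2=\lambda h$) to force $\lambda=1$. So your plan would work, but replacing the PDE viewpoint with this pointwise linear-algebra definition of $f$ removes the only genuine difficulty you identified.
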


\subsection{Proof of the main theorem: Direct part}

In this section we prove the direct part of theorem \ref{thm:Umbilic},  namely, that the immersion $\phi$ defined by equation \eqref{eq:UmbilicPhi} is umbilic and normally flat.

\begin{proof}
By proposition \ref{prop:UNF}, we have to show that $Q=(0,1)$, $0\in\R^{n+1}$, belongs to the affine normal plane, for any $u\in U$. Take a tangent frame 
$\{X_1,..,X_n\}$ orthonormal with respect to the Blaschke metric $h$ of the immersion $f$. 
Since 
\begin{equation*}
\phi_{*}X=\left(  \nu_{*}X, \nu_{*}X\cdot(f(u)-O) \right),
\end{equation*}
we have that 
\begin{equation*}
D_X\phi_{*}Y=\left(  D_X\nu_{*}Y, D_X\nu_{*}Y\cdot(f(u)-O) \right)- h(X,Y)Q. 
\end{equation*}
The first parcel in the second member is tangent to $M$: In fact, writing
\begin{equation*}
D_X\nu_{*}Y=\sum_{i=1}^n a_i\nu_{*}X_i+b\nu.
\end{equation*}
we obtain
\begin{equation*}
\left( D_X\nu_{*}Y,   D_X\nu_{*}Y\cdot(f-O) \right)=\sum_{i=1}^n a_i \phi_{*}X_i+b\phi,
\end{equation*}
which is tangent to $M$. We conclude that $h^2(X,Y)=h(X,Y)$, and so $\{X_1,...,X_n\}$ is $h^2$-orthonormal for the frame $\{\phi,Q\}$.
To conclude that $Q$ is in the affine normal plane we must verify that 
\begin{equation*}\label{eq:DetPhi}
\left[ \phi_{*}X_1,...\phi_{*}X_n, \phi, Q \right]
\end{equation*}
is constant. But 
\begin{equation*}\label{eq:DetPhi1}
\left[ \phi_{*}X_1,...\phi_{*}X_n, \phi, Q \right]=\left[ \nu_{*}X_1,...\nu_{*}X_n, \nu \right]=-1,
\end{equation*}
thus proving the direct part of theorem \ref{thm:Umbilic}.
\end{proof}

Choose $\xi=\phi$ and $\eta=-Q$ in \eqref{eq:Lap} to get 
\begin{equation}\label{eq:Lap1}
\frac{1}{n}\Delta\phi=-Q-\lambda\phi,
\end{equation}
where $\lambda=-\frac{1}{n}tr_g(h^1)$.

\begin{cor}
We have that $\lambda=H$, where $H$ is the affine mean curvature of the immersion $f$.
\end{cor}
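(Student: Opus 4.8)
The plan is to compute the bilinear form $h^1$ of the immersion $\phi$ explicitly in terms of the affine shape operator $S$ of the Blaschke structure of $f$, and then read off $\lambda$ from its trace. Since $\lambda=-\frac1n\operatorname{tr}_g(h^1)$ and, by definition, the affine mean curvature of $f$ is $H=\frac1n\operatorname{tr}(S)$, it suffices to establish the pointwise identity $h^1(X,Y)=-h(SX,Y)$; taking the $g$-trace (recall $g=h$ from the direct part) then yields $\operatorname{tr}_g(h^1)=-\operatorname{tr}(S)=-nH$ and hence $\lambda=H$.

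To obtain $h^1$ I would reuse the computation from the direct part. There it is shown that $D_X\phi_*Y=\left(D_X\nu_*Y,\,(D_X\nu_*Y)\cdot(f-O)\right)-h(X,Y)Q$ and that, expanding $D_X\nu_*Y=\sum_i a_i\,\nu_*X_i+b\,\nu$ in the basis $\{\nu_*X_1,\dots,\nu_*X_n,\nu\}$ of $\R^{n+1}_*$, the first parcel equals $\sum_i a_i\,\phi_*X_i+b\,\phi$. Comparing with the structure equation $D_X\phi_*Y=\phi_*(\nabla_XY)+h^1(X,Y)\xi+h^2(X,Y)\eta$ for the choice $\xi=\phi$, $\eta=-Q$, the tangent part matches $\phi_*(\nabla_XY)$, the $Q$-part reproduces $h^2=h$, and crucially the $\phi$-component gives $h^1(X,Y)=b$. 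Thus the whole problem reduces to computing the $\nu$-coefficient $b$ of $D_X\nu_*Y$.

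The key tool is the set of structure equations of the Blaschke conormal. Writing $\xi_f$ for the affine normal of $f$, the conormal satisfies $\langle\nu,f_*X\rangle=0$ and $\langle\nu,\xi_f\rangle=1$; differentiating these gives $\langle\nu_*X,f_*Y\rangle=-h(X,Y)$ and $\langle\nu_*X,\xi_f\rangle=0$. Consequently $\{-\nu_*X_1,\dots,-\nu_*X_n,\nu\}$ is the dual basis of $\{f_*X_1,\dots,f_*X_n,\xi_f\}$, so $b$ is recovered simply by pairing with $\xi_f$, namely $b=\langle D_X\nu_*Y,\xi_f\rangle$. Using the product rule together with $\langle\nu_*Y,\xi_f\rangle\equiv 0$ and the equiaffine structure equation $D_X\xi_f=-f_*(SX)$, I get $b=-\langle\nu_*Y,D_X\xi_f\rangle=\langle\nu_*Y,f_*(SX)\rangle=-h(Y,SX)$. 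Since $S$ is $h$-self-adjoint this is exactly $h^1(X,Y)=-h(SX,Y)$, and the trace computation above finishes the proof.

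The main obstacle is purely the bookkeeping of signs and the conormal duality: one must correctly identify that the $\nu$-component of the second derivative of $\nu$ is extracted by pairing with $\xi_f$ (not with $f$), and then carry the two minus signs coming from $\langle\nu_*X,f_*Y\rangle=-h(X,Y)$ and from $D_X\xi_f=-f_*(SX)$ so that they combine to give $h^1=-h(S\cdot,\cdot)$ and not $+h(S\cdot,\cdot)$. As an independent cross-check, the first coordinate of equation \eqref{eq:Lap1} reads $\frac1n\Delta\nu=-\lambda\nu$, i.e.\ $\Delta_g\nu=-n\lambda\nu$; comparing this with the classical identity $\Delta_h\nu=-nH\nu$ for the Blaschke conormal again confirms that $\lambda=H$.
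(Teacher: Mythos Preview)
Your proof is correct, and in fact it is more informative than the paper's: you obtain the full identity $h^1(X,Y)=-h(SX,Y)$, from which $\lambda=H$ follows by tracing. The paper takes a different route. It observes (as you do) that $g=h$, so the Laplacian of $\phi$ with respect to $g$ coincides componentwise with the $h$-Laplacian of $(\nu,\nu\cdot(f-O))$; it then quotes the classical identities $\frac1n\Delta_h\nu=-H\nu$ and $\frac1n\Delta_h\bigl(\nu\cdot(f-O)\bigr)=-1-H\,\nu\cdot(f-O)$ from Nomizu--Sasaki, assembles them into $\frac1n\Delta\phi=-H\phi-Q$, and matches this against equation \eqref{eq:Lap1}. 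Your ``cross-check'' paragraph is exactly the first coordinate of the paper's argument. The trade-off is clear: the paper's proof is shorter by outsourcing the computation to a reference, while yours is self-contained and pins down $h^1$ itself (which is of independent interest, since it identifies the second fundamental form of $\phi$ in the $\xi$-direction with the third fundamental form of $f$).
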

\begin{proof}
From the proof of theorem \ref{thm:Umbilic} we obtain that the Blaschke metric $h$ of $f$ coincides with the metric $g=h^2$ of the immersion $\phi$.
Thus the Laplacian operators of $f$ and $\phi$ are the same. If follows from \cite{Nomizu}, propositions 6.2 and 6.3, that 
$$
\frac{1}{n}\Delta\nu=-H\nu;\ \  \frac{1}{n}\Delta(\nu\cdot(f-O))=-1-H \nu\cdot(f-O).
$$
Thus
$$
\frac{1}{n}\Delta\phi=-H\phi-Q.
$$
Comparing with equation \eqref{eq:Lap1}, the corollary is proved.
\end{proof}

\subsection{Proof of the main theorem: Converse part}

In this section we prove the converse of theorem \ref{thm:Umbilic}, namely, that  any umbilic and normally immersion is given by equation \eqref{eq:UmbilicPhi}, for some immersion $f$
and origin $O\in\R^{n+1}$.

\begin{proof}
Assume that $\phi$ is an umbilic immersion and normally flat and write $\phi=(\psi,z)$. Define $f$ by the conditions
$$
\psi\cdot (f-O)=z;\ \ \psi_{*}X\cdot (f-O)=X(z),
$$
for some origin $O\in\R^{n+1}$. These equations imply that $\psi\cdot  f_{*}X=0$, for any $X$. We conclude that $\psi=\lambda\nu$, for some $\lambda\in\R$, where $\nu$
denotes the Blaschke co-normal of the immersion $f$. 
Since by proposition \ref{prop:UNF} $Q$ belongs to the affine normal plane of $\phi$, we can find a local frame $\{X_1,..,X_n\}$, $h^2$-orthonormal, such that
\begin{equation*}
\left[ \phi_{*}X_1,...\phi_{*}X_n, \phi, Q \right]=-1.
\end{equation*}
This equation implies 
\begin{equation*}
\left[ \psi_{*}X_1,...\psi_{*}X_n, \psi \right]=-1,
\end{equation*}
and then 
\begin{equation}\label{eq:DetNu1}
\left[ \nu_{*}X_1,...\nu_{*}X_n, \nu \right]=-\lambda^{n+1}.
\end{equation}

Differentiating 
\begin{equation*}
\phi_{*}X=\left( \psi_{*}X, \psi_{*}X\cdot(f-O) \right)
\end{equation*}
we obtain
\begin{equation*}
D_X\phi_{*}Y=\left( D_X\psi_{*}Y, D_X\psi_{*}Y\cdot(f-O) \right) + (\psi_{*}Y\cdot f_{*}X)\ Q.
\end{equation*}
The same argument as above says that the first vector of the second member is tangent to $M$, and so $h^2(X,Y)=-\psi_{*}Y\cdot f_{*}X$. We conclude that 
$$
h^2(X,Y)=-\lambda\nu_{*}Y\cdot f_{*}X=\lambda h(X,Y).
$$
Now consider the $h$-orthonormal basis $\{Y_1,..,Y_n\}$, where $Y_i=\lambda^{1/2}X_i$. 
Since 
\begin{equation*}
\left[ \nu_{*}Y_1,...\nu_{*}Y_n, \nu \right]=-1,
\end{equation*}
we conclude that 
\begin{equation}\label{eq:DetNu2}
\left[ \nu_{*}X_1,...\nu_{*}X_n, \nu \right]=-\lambda^{\frac{n}{2}}.
\end{equation}
From equations \eqref{eq:DetNu1} and \eqref{eq:DetNu2} we conclude that $\lambda=1$. 
\end{proof}

\subsection{Umbilic and normally flat immersions contained in a hyperplane}

We now characterize those umbilic and normally flat immersions that are contained in a hyperplane:

\begin{prop}
Assume that $\phi\subset M$ is given by equation \eqref{eq:UmbilicPhi}, for some immersion $f$ and some origin $O$. Then $\phi$ is contained in a hyperplane if and only if 
$f$ is a proper affine sphere.
\end{prop}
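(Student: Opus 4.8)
The plan is to rewrite the condition ``$\phi(U)$ is contained in a hyperplane'' as an affine-linear relation among the two components of $\phi=(\nu,w)$, where $w=\nu\cdot(f-O)$ is the support function, and then to read off the proper affine sphere condition from the structure equations of the Blaschke co-normal. A hyperplane of $\R^{n+2}=\R^{n+1}_{*}\times\R$ is the zero set of a nonzero affine functional, so $\phi(U)$ lies in a hyperplane if and only if there are constants $\mathbf a\in\R^{n+1}$, $c\in\R$, $d\in\R$, with $(\mathbf a,c)\neq 0$, such that
\begin{equation*}
\nu\cdot\mathbf a+c\,\bigl(\nu\cdot(f-O)\bigr)\equiv d,\qquad\text{i.e.}\qquad \nu\cdot\bigl(\mathbf a+c(f-O)\bigr)\equiv d .
\end{equation*}
First I would treat the main case $c\neq 0$. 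Setting $P=O-c^{-1}\mathbf a$ and $k=d/c$, the relation becomes $\nu\cdot(f-P)\equiv k$, that is, the support function of $f$ with respect to the fixed point $P$ is constant.

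Next I would identify constancy of this support function with $f$ being a proper affine sphere centred at $P$. Let $\xi$ denote the Blaschke affine normal of $f$; the standard co-normal equations (\cite{Nomizu}) give $\nu\cdot\xi=1$, $\nu\cdot f_{*}X=0$, and, upon differentiation, $\nu_{*}X\cdot\xi=0$ and $\nu_{*}X\cdot f_{*}Y=-h(X,Y)$. Since $h$ is non-degenerate, $\nu$ is an immersion and the covectors $\{\nu_{*}X\}$ span the annihilator of $\xi$, so their common kernel, viewed as functionals on $\R^{n+1}$, is exactly the line $\R\,\xi$. Differentiating $\nu\cdot(f-P)\equiv k$ and using $\nu\cdot f_{*}X=0$ yields $\nu_{*}X\cdot(f-P)=0$ for all $X$, hence $f-P=t\,\xi$ for a scalar function $t$. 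A second differentiation, using the Blaschke equation $D_{X}\xi=-f_{*}(SX)$ and separating the $\xi$-component from the tangential one, forces $t$ to be a nonzero constant and $S=-t^{-1}I$; thus $f$ is a proper affine sphere of centre $P$ and affine mean curvature $H=-t^{-1}$. The converse runs backwards: a proper affine sphere satisfies $\xi=-H(f-P)$, so $\nu\cdot(f-P)=-H^{-1}$ is constant, and $\phi$ lies in the hyperplane $\{\,(y,t):y\cdot(O-P)+t=-H^{-1}\,\}$.

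The step I expect to be the main obstacle is the degenerate case $c=0$, in which the relation reduces to $\nu\cdot\mathbf a\equiv d$ with $\mathbf a\neq 0$ constant. Differentiating gives $\nu_{*}X\cdot\mathbf a=0$ for every $X$, so by the kernel computation above $\mathbf a\in\R\,\xi(u)$ for all $u$; as $\mathbf a$ is constant, this says that $\xi$ has constant direction, i.e. $f$ is an \emph{improper} affine sphere (centre at infinity). To recover exactly the stated conclusion I would therefore need either to rule this case out under the running hypotheses or to read ``proper'' as excluding the centre-at-infinity situation; deciding which of these is correct, rather than merely carrying through the case $c\neq 0$, is the delicate point I foresee.
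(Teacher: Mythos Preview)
Your argument is correct and follows essentially the same line as the paper, only spelled out in more detail. The paper's proof is two sentences: it quotes the fact (Nomizu--Sasaki, Prop.~5.10) that the support function $z=\nu\cdot(f-O)$ is constant if and only if $f$ is a proper affine sphere with centre $O$, and then observes that ``$\phi$ in a hyperplane'' is the same as ``$z$ constant for some choice of $O$''. Your explicit affine-functional computation $\nu\cdot(\mathbf a+c(f-O))\equiv d$ and the substitution $P=O-c^{-1}\mathbf a$ is exactly this change-of-origin step written out; and your derivation from the co-normal equations that constant support function is equivalent to proper affine sphere is precisely the content of the cited proposition.

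Your worry about the degenerate case $c=0$ is well placed and is not a defect of your argument but of the statement itself. The paper's own proof in fact ends with ``Thus $f$ is an affine sphere'', dropping the word \emph{proper}; your analysis shows why: when $c=0$ one gets $\nu\cdot\mathbf a\equiv d$, forcing $\mathbf a\parallel\xi$ and hence an improper affine sphere, and conversely an improper affine sphere (constant $\xi$) satisfies $\nu\cdot\xi\equiv 1$, so $\phi$ lies in a ``vertical'' hyperplane. So the honest dichotomy is: $\phi$ lies in a hyperplane with $c\neq 0$ iff $f$ is a proper affine sphere, and in a hyperplane with $c=0$ iff $f$ is an improper one. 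There is nothing further to rule out under the running hypotheses; the compactness assumption in the subsequent corollary is what eliminates the improper case there.
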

\begin{proof}
The affine distance $z$ is constant if and only if $f$ is a proper affine sphere and $O$ its center (\cite{Nomizu}, prop.5.10). Thus, if $f$ is a proper affine sphere and $O$ is arbritary, $\phi$ 
is contained in a hyperplane. Conversely, if $\phi$ is contained in a hyperplane, $z$ is constant for some choice of $O$. Thus $f$ is an affine sphere. 
\end{proof}

Since a compact affine sphere is necessarily an ellipsoid, we have the following corollary:

\begin{cor}
Assume that $\phi\subset M$ is given by equation \eqref{eq:UmbilicPhi}, for some compact immersion $f$ and some origin $O$. Then $\phi$ is contained in a hyperplane if and only if 
$f$ is an ellipsoid.
\end{cor}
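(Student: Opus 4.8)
The plan is to deduce this corollary from the preceding proposition together with the classical affine rigidity theorem asserting that a compact affine sphere must be an ellipsoid. By the preceding proposition, $\phi$ is contained in a hyperplane if and only if $f$ is a proper affine sphere, so it suffices to show that, under the standing hypothesis that $f$ is compact, the class of proper affine spheres coincides with the class of ellipsoids.

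For the forward implication I would argue as follows. Suppose $\phi$ is contained in a hyperplane. The preceding proposition then gives that $f$ is a proper affine sphere, and in particular an affine sphere. Since $f$ is compact, the classical theorem of Blaschke for $n=2$, together with its higher-dimensional extension, that every compact affine sphere (without boundary) is an ellipsoid, immediately yields that $f$ is an ellipsoid. For the converse, if $f$ is an ellipsoid then its affine normal lines all meet at the center, so $f$ is a proper affine sphere; applying the preceding proposition in the other direction shows that $\phi$ is contained in a hyperplane.

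The one point requiring care is the passage from ``affine sphere'' to ``proper affine sphere''. An improper affine sphere, whose affine normals are mutually parallel (for instance an elliptic paraboloid), is never compact. Hence the compactness assumption on $f$ automatically forces properness, and this is exactly the hypothesis needed to invoke the rigidity theorem. In other words, compactness is precisely the ingredient that upgrades the ``proper affine sphere'' characterization supplied by the proposition to the sharper ``ellipsoid'' characterization asserted here; the corollary is then immediate, and no new computation with the immersion $\phi$ or the support function $\nu\cdot(f-O)$ is required beyond what was already established.

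The only genuine obstacle is the appeal to the global rigidity result, which is not proved in the paper and must be cited as a known theorem in affine differential geometry; everything else is a direct combination of this fact with the local statement already in hand.
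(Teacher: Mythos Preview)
Your argument is correct and matches the paper's approach exactly: the paper simply remarks that a compact affine sphere is necessarily an ellipsoid and declares the corollary immediate from the preceding proposition. Your additional discussion of why compactness rules out improper affine spheres is a helpful elaboration, but the core logic is identical.
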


\bibliographystyle{amsplain}

\begin{thebibliography}{10}

\bibitem{Buchin} S.~Buchin, \emph{Affine Differential Geometry}, Science Press, 1983.

\bibitem{Cecil} T.~Cecil, \emph{Focal points and support functions in affine differential geometry}, Geom.~Dedicata, 50, 291-300, 1994.

\bibitem{Craizer} M.~Craizer, M.J.~Saia and L.~S\'anchez, \emph{Equiaffine Darboux frames for codimension $2$ submanifolds contained in hypersurfaces}, 
to appear in J.~Math.~Soc.~Japan, 2016.


\bibitem{Cipolla} R.~Cipolla and P.~J.~Giblin, \emph{Visual Motion of Curves and Surfaces}, Cambridge University Press, 2000.

\bibitem{Davis} D.~Davis, \emph{Affine differential geometry and singularity theory}, Thesis, Univ.~Liverpool, 2008.

\bibitem{Fidal} D.~L.~Fidal, \emph{The existence of sextactic points}, Math.~Proc.~Camb.~Phil.~Soc., 96, 433-436, 1984

\bibitem{Giblin} J.~W.~Bruce and P.~J.~Giblin, \emph{Curves and Singularities}, Cambridge University Press, 1992.

\bibitem{Guieu}
L.~Guieu, E.~Mourre and V.~Yu.~Ovsienko: \emph{Theorem on six vertices of a plane curve via the Sturm theory}, The Arnold-Gelfand Math.~Seminars (Geom.~Sing.~Theory), Birkh\"auser, Boston, 1997.

\bibitem{Izu1}
S.~Izumiya and T.~Sano, \emph{Generic affine differential geometry of space curves}, Proc.~Royal.~Soc.~Edinburgh: Sec.A.~Math., 128(2), 301-314, 1998.





\bibitem{Nomizu}
K.~Nomizu and T.~Sasaki, \emph{Affine Differential Geometry}, Cambridge Tracts
in Mathematics, vol. 111, Cambridge University Press, Cambridge, 1994.

\bibitem{Nomizu-Vrancken}
K.~Nomizu and L.~Vrancken, \emph{A new equiaffine theory for surfaces in $\R^4$}, Int.~J.~Math., 4(1), 127-165, 1993.


\bibitem{LSan}
M.~J.~Saia and L.~S\'anchez, \emph{Affine metrics for locally strictly convex manifolds of codimension 2}, to appear in Contemp.~Math., 2016.

\bibitem{Sanchez} 
J.~J.~Nu\~no-Ballesteros, M.~J.~Saia and L.~S\'anchez, \emph{Affine focal points for locally strictly convex surfaces in $4$-space}, to appear in Results~Math., 2016.

\bibitem{LSanTese}
L.~S\'anchez, \emph{Surfaces in 4-space from the affine differential geometry viewpoint}, Ph.D.Thesis, USP S\~ao Carlos, 2014.



\end{thebibliography}

\end{document}